\newlength\myheight
\newlength\mydepth
\settototalheight\myheight{Xygp}
\newtheorem{theorem}{Theorem}[section]
\newtheorem{lemma}[theorem]{Lemma}
\newtheorem{proposition}[theorem]{Proposition}
\newtheorem{corollary}[theorem]{Corollary}
\theoremstyle{definition}
\newtheorem{examples}[theorem]{Examples}
\theoremstyle{definition}
\newtheorem{remark}[theorem]{Remark}
\newtheorem{remarks}[theorem]{Remarks}
\newtheorem{definition}[theorem]{Definition}
\newtheorem{notation}[theorem]{Notation}
\newtheorem{thm}{Theorem}	
\newtheorem*{crl*}{Corollary}
\DeclareMathOperator{\Img}{Im}
\DeclareMathOperator{\id}{id}
\DeclareMathOperator{\Hom}{Hom}
\DeclareMathOperator{\Map}{\mathsf{Map}}
\DeclareMathOperator{\CoEnd}{CoEnd}
\renewcommand{\S}{{\mathsf S}}
\renewcommand{\P}{{\mathcal{P}}}
\newcommand{\D}{\mathcal{\D}}
\newcommand{\C}{\mathcal{\C}}
\newcommand{\R}{{\mathbb{R}}}
\newcommand{\coend}{\mathsf{CoEnd}}
\newcommand{\End}{\mathsf{End}}
\definecolor{red}{rgb}{1,0.1,0.1}
\definecolor{blue}{rgb}{0.1,0.1,1}
\definecolor{green}{rgb}{0.8,0.5,0.5}
\begin{document}

\title{A recognition principle for iterated suspensions as coalgebras over the little cubes operad}

\author{Oisín Flynn-Connolly, José M. Moreno-Fernández, Felix Wierstra}

\date{}
\maketitle

\medskip

\abstract{Our main result is a recognition principle for iterated suspensions as coalgebras over the little cubes operads. 
Given a topological operad, we construct a comonad in pointed topological spaces endowed with the wedge product. 
We then prove an approximation theorem showing that the comonad associated to the little $n$-cubes operad is weakly equivalent to the comonad $\Sigma^n \Omega^n$ arising from the suspension-loop space adjunction. 
Finally, our recognition theorem states that every little $n$-cubes coalgebra is homotopy equivalent to an $n$-fold suspension.     
These results are the Eckmann--Hilton dual of May's  foundational results on iterated loop spaces.}

\begin{spacing}{0.5}
    \tableofcontents
\end{spacing}

\section{Introduction}

Since the invention of operads, 
they have played an essential role in many parts of mathematics and physics. 
Their first application, and the original motivation for their invention,
was to the study of iterated loop spaces 
(see \cite{May72} and \cite{Boa73}). 
Operads provide a coherent framework for studying objects 
equipped with  many  "multiplications", 
i.e.\  operations with multiple inputs and one output, 
satisfying certain homotopical
coherences.
An important class of such objects are the $n$-fold loop spaces,
which are algebras over the little $n$-cubes operad. 
May showed in his recognition principle a homotopical converse, namely that every connected
little $n$-cubes algebra is weakly equivalent to an $n$-fold loop space;
and further proved an approximation theorem which asserts that the monad associated to the little $n$-cubes operad is weakly equivalent to the monad $\Omega^n \Sigma^n$. 
This approximation theorem reduced the study of operations on the homology of iterated loop spaces to the combinatorics of the little cubes operads. This perspective unraveled their complete algebraic structure (see \cite{Coh76}).

It has long been suspected that the recognition principle and the approximation theorem should have their corresponding
Eckmann--Hilton dual versions.
Indeed, work on this topic predates May's recognition theorem itself. 
By the end of the 1950s, Barratt and Stasheff studied in Oxford a preliminary version of these questions, 
trying to characterize $n$-fold suspensions and co-H-spaces in terms of their algebraic structure.
May's proof of the recognition principle reignited interest and there were immediate attempts to prove the Eckmann--Hilton dual;
some of this story can be found in the comments on the MathOverflow question \cite{MathOverflow}.
We are also aware of other more recent attempts to tackle the problem, 
but a solution has remained evasive until now.

The goal of this paper is to prove the Eckmann--Hilton dual results of May's work on iterated loop spaces.
Our proof is the consequence of two new key insights. 
Firstly, in general, without the added assumption of conilpotency, 
cofree coalgebra functors are notoriously difficult to construct and almost impossible to concretely work with. 
We were able to surmount this difficulty by proving that, 
in our case, elements of a cofree coalgebra are determined by their arity 1 component (see Lemma \ref{Lemma:Determined by 1st comp}). 
This is a very special feature of our setting which is surprising compared to the more algebraic setting. 
It is this fact that enabled us to cleanly define the cofree cooperation and perform the concrete manipulations that made the proof possible.
Secondly, we were able to show that the corelations in our comonad lie in arity 2,
something we were able to interpret in a very concrete way 
(see Proposition \ref{Prop: Geometric characterization of C_n}.) 
The Eckmann--Hilton dual of these facts both fail.

First of all, we construct a comonad in the category of pointed spaces associated to an operad. 
Next, we show that $n$-fold suspensions are coalgebras over the little 
$n$-cubes operad $\mathcal C_n$. 
More precisely,
we prove the following result.

\begin{thm} \em
\label{teo Intro 0}
	The $n$-fold reduced suspension of a pointed space $X$ is a $\mathcal C_n$-coalgebra.
	More precisely, 
	there is a natural and explicit operad map
	\begin{equation*}
		\nabla: \mathcal{C}_n \rightarrow \coend_{\Sigma^n X},
	\end{equation*}
	where $\operatorname{CoEnd}_{\Sigma^nX}$ is the coendomorphism operad of $\Sigma^nX$. 
	The map $\nabla$ encodes the homotopy coassociativity and homotopy cocommutativity of the classical pinch map $\Sigma^nX \to \Sigma^nX \vee \Sigma^nX$.
In particular, the pinch map  is an operation associated to an element of $\mathcal C_n(2)$. 
Furthermore, for any based map $X\to Y$, the induced map $\Sigma^n X \to \Sigma^n  Y$ extends to a morphism of $\mathcal C_n$-coalgebras.
\end{thm}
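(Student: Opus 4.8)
The plan is to exhibit $\nabla$ by an explicit formula on each arity and then verify the three operad axioms. Throughout I write $\Sigma^n X = S^n \wedge X$ with $S^n = I^n/\partial I^n$, and I exploit the canonical homeomorphism
\[
\left( \bigvee_{i=1}^k S^n \right) \wedge X \;\cong\; \bigvee_{i=1}^k \left( S^n \wedge X \right) = \bigvee_{i=1}^k \Sigma^n X,
\]
which reduces every construction to the sphere coordinate, with $\wedge\,\id_X$ applied at the end. Since $\coend_{\Sigma^n X}(k) = \Map_*\!\left(\Sigma^n X,\,(\Sigma^n X)^{\vee k}\right)$ and a $\mathcal C_n$-coalgebra structure is precisely an operad map $\mathcal C_n \to \coend_{\Sigma^n X}$, it suffices to produce, naturally and continuously in the configuration of little cubes, a pointed self-collapse of $S^n$ landing in the $k$-fold wedge.

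First I would define, for a configuration $\mathbf c = (c_1,\dots,c_k)\in\mathcal C_n(k)$, the pinch-type map
\[
\delta_{\mathbf c}\colon S^n \longrightarrow \bigvee_{i=1}^k S^n
\]
by collapsing: a point $p\in I^n$ is sent to the basepoint unless it lies in the interior of exactly one cube $c_i$, in which case $p=c_i(q)$ and we send it to $q\in S^n$ in the $i$-th wedge summand. Disjointness of the cube interiors makes this unambiguous, and the fact that $\partial I^n$ together with all cube faces maps to the basepoint makes it a well-defined pointed continuous map. Setting $\nabla(\mathbf c) := \delta_{\mathbf c}\wedge\id_X$ and invoking the displayed homeomorphism produces the desired element of $\coend_{\Sigma^n X}(k)$. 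Continuity of $\mathbf c\mapsto\nabla(\mathbf c)$ as a map $\mathcal C_n(k)\to\coend_{\Sigma^n X}(k)$ follows because the cubes depend continuously on their centres and side-lengths and collapsing depends continuously on these data; this is the first point needing a careful, if routine, argument with the compact-open topology.

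Next I would verify the axioms. Equivariance is immediate, since permuting the cubes of $\mathbf c$ permutes the wedge summands of $\delta_{\mathbf c}$. Unitality holds because the unique cube in $\mathcal C_n(1)$ gives $\delta=\id_{S^n}$, so $\nabla$ sends the operadic unit to $\id_{\Sigma^n X}$. The crux is compatibility with operadic composition: for $\mathbf c\in\mathcal C_n(k)$ and $\mathbf d^{(i)}\in\mathcal C_n(j_i)$, writing $j=j_1+\cdots+j_k$, I must check that $\nabla$ of the inserted configuration $\mathbf c\circ(\mathbf d^{(1)},\dots,\mathbf d^{(k)})$ equals the coendomorphism composite
\[
\Sigma^n X \xrightarrow{\ \nabla(\mathbf c)\ } \bigvee_{i=1}^k \Sigma^n X \xrightarrow{\ \bigvee_{i}\nabla(\mathbf d^{(i)})\ } \bigvee_{i=1}^k (\Sigma^n X)^{\vee j_i} = (\Sigma^n X)^{\vee j}.
\]
At the sphere level this amounts to the observation that reparametrising a point through the composed embedding $c_i\circ d^{(i)}_\ell$ agrees with first collapsing via $c_i$ and then via $d^{(i)}_\ell$, the disjointness hypotheses guaranteeing that both routes send exactly the same points to the basepoint.

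I expect this composition axiom to be the main obstacle, not because it is conceptually deep but because it requires careful bookkeeping of which points land in which summand under the nested collapses, and a check that the two descriptions coincide on the nose rather than merely up to homotopy. The closing assertions then follow quickly. The pinch map is $\nabla$ applied to the element of $\mathcal C_n(2)$ given by two cubes stacked along the first coordinate. Since $\mathcal C_n$ is a model for the little $n$-cubes operad, the existence of $\nabla$ automatically equips this binary operation with all the higher $E_n$-coherences; homotopy coassociativity and homotopy cocommutativity are the first instances, witnessed by paths joining the two iterated composites inside $\mathcal C_n(3)$ and joining the swapped two-cube configuration to the original inside $\mathcal C_n(2)$. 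Finally, naturality in $X$ is automatic: because $\delta_{\mathbf c}$ is independent of $X$, bifunctoriality of the smash product gives $(\Sigma^n f)^{\vee k}\circ\nabla(\mathbf c) = \nabla(\mathbf c)\circ\Sigma^n f$ for every based map $f\colon X\to Y$, so $\Sigma^n f$ is a morphism of $\mathcal C_n$-coalgebras.
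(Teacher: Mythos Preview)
Your proposal is correct and follows essentially the same route as the paper: first construct the collapse map on $S^n$ explicitly, then extend to $\Sigma^n X$ by smashing with $\id_X$ and invoking the distributivity of $\wedge$ over $\vee$. One minor slip to fix: $\mathcal C_n(1)$ is not a single point but the (contractible) space of all rectilinear self-embeddings of $I^n$, so for unitality you should say that the operadic \emph{unit}---the identity embedding---gives $\delta = \id_{S^n}$.
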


In this new setting, 
the Eckmann--Hilton dual of May's celebrated recognition of iterated loop spaces reads as follows.

\begin{thm} 
	\label{teo Intro 1}  \em
	Let $X$ be a $\mathcal C_n$-coalgebra. 
	Then there is a pointed space $\Gamma^n(X)$, naturally associated to $X$, 
	together with a weak equivalence of $\mathcal C_n$-coalgebras
	\begin{center}
		\begin{tikzcd}
 \Sigma^n\Gamma^n(X) \arrow[r, "\simeq"] &X,
		\end{tikzcd}
	\end{center}
	which is a deformation retract in the category of pointed spaces. Therefore, every $\mathcal C_n$-coalgebra has the homotopy type of an $n$-fold reduced suspension.
\end{thm}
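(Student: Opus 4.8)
The plan is to carry out the Eckmann--Hilton dual of May's recognition argument, systematically replacing the two-sided bar construction by the \emph{cobar construction} and May's approximation theorem for the monad $\Omega^n\Sigma^n$ by our approximation theorem for the comonad $\Sigma^n\Omega^n$. Write $\mathsf D$ for the comonad on pointed spaces associated to $\mathcal{C}_n$, so that a $\mathcal{C}_n$-coalgebra is the same datum as a coalgebra $\psi\colon X\to \mathsf D X$ over $\mathsf D$, and let $\phi\colon \mathsf D \to \Sigma^n\Omega^n$ be the comonad map supplied by the approximation theorem (the Eckmann--Hilton dual of May's approximation map $\alpha\colon C_n\to\Omega^n\Sigma^n$), which is a weak equivalence. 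Recall that a comonad map $\mathsf D \to \Sigma^n\Omega^n$ sends $\mathsf D$-coalgebras to $\Sigma^n\Omega^n$-coalgebras, i.e.\ produces out of $X$ a (homotopy) section of the adjunction counit $\epsilon\colon\Sigma^n\Omega^nX\to X$; this already exhibits $X$ as a homotopy retract of an $n$-fold suspension, and the work is to promote this retract to a genuine weak equivalence. I will build $\Gamma^n(X)$ as a totalization and exhibit $X$ as $\Sigma^n\Gamma^n(X)$ through a finite zigzag of weak equivalences of $\mathcal{C}_n$-coalgebras.

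\textbf{The construction and the three comparison maps.} Starting from the coaction $\psi$, I form the cosimplicial space $q\mapsto \mathsf D^{\,q+1}X$ (cofaces from the comultiplication of $\mathsf D$, codegeneracies from its counit), the canonical cobar resolution of the $\mathsf D$-coalgebra $X$. Its coaugmentation
\[ X \longrightarrow \mathrm{Tot}\big(\mathsf D^{\,\bullet+1}X\big) \]
is a homotopy equivalence, because the coaction $\psi$ furnishes an extra codegeneracy and hence an explicit cosimplicial contraction; this is the dual of the fact that $B(C_n,C_n,X)\to X$ is a homotopy equivalence. Applying $\phi$ levelwise yields the cosimplicial space $q\mapsto (\Sigma^n\Omega^n)^{q+1}X$, and the approximation theorem makes $\mathrm{Tot}(\mathsf D^{\bullet+1}X)\to\mathrm{Tot}((\Sigma^n\Omega^n)^{\bullet+1}X)$ a weak equivalence. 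Finally, writing $(\Sigma^n\Omega^n)^{q+1}X=\Sigma^n\big(\Omega^n(\Sigma^n\Omega^n)^{q}X\big)$ I pull the outer $\Sigma^n$ out of the totalization: setting
\[ \Gamma^n(X) := \mathrm{Tot}\big(q\mapsto \Omega^n(\Sigma^n\Omega^n)^{q}X\big), \]
the natural comparison $\Sigma^n\Gamma^n(X)=\Sigma^n\,\mathrm{Tot}\big(\Omega^n(\Sigma^n\Omega^n)^{\bullet}X\big)\to \mathrm{Tot}\big(\Sigma^n\Omega^n(\Sigma^n\Omega^n)^{\bullet}X\big)$ completes the zigzag. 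Composing, and using that each stage is a map of $\mathcal{C}_n$-coalgebras (the coalgebra structure on the suspensions being the one furnished by Theorem~\ref{teo Intro 0}), yields the desired weak equivalence $\Sigma^n\Gamma^n(X)\xrightarrow{\simeq} X$.

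\textbf{The retract.} The retraction claim is not an afterthought but the structural heart of the statement: the extra codegeneracy produced by $\psi$ is a genuine, not merely homotopical, splitting at the cosimplicial level, so the coaugmentation $X\to\mathrm{Tot}(\mathsf D^{\bullet+1}X)$ is a retract in pointed spaces, with explicit retraction induced by the comonad counit (equivalently, by the $\mathcal{C}_n$-coalgebra counit axiom $\delta\circ\psi=\mathrm{id}_X$). Transporting this splitting along the two remaining comparison maps exhibits $\Sigma^n\Gamma^n(X)\to X$ as a retract, as asserted.

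\textbf{Main obstacle.} The crux is the last comparison map, namely commuting $\Sigma^n$ past the totalization. Since $\Sigma^n$ is a left adjoint while $\mathrm{Tot}$ is a homotopy limit, suspension does \emph{not} commute with $\mathrm{Tot}$ in general, so the natural map $\Sigma^n\,\mathrm{Tot}(W^{\bullet})\to\mathrm{Tot}(\Sigma^n W^{\bullet})$ need not be a weak equivalence; this is precisely the Eckmann--Hilton dual of May's nontrivial geometric input that $\Omega^n$ commutes with the realization of connected simplicial spaces, together with the attendant convergence issues for the cobar/$\mathrm{Tot}$-tower. I expect this to be the main difficulty, and I would resolve it by exploiting the retract structure itself: the explicit cosimplicial contraction forces the $\mathrm{Tot}$-tower to be sufficiently degenerate, pro-constant in the relevant range, that the comparison map is an equivalence, so convergence is not an extra hypothesis but a consequence of $X$ being a $\mathcal{C}_n$-coalgebra. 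A secondary point requiring care is the verification that every map in the zigzag is a morphism of $\mathcal{C}_n$-coalgebras and that the coalgebra structures agree with those of Theorem~\ref{teo Intro 0}, which I would check by naturality of $\nabla$ and of the comonad structure maps.
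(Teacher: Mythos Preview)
Your proposal has a genuine gap: the direction of the comonad morphism supplied by the approximation theorem is the \emph{opposite} of what you assume. Theorem~\ref{teo Intro 2} produces a morphism of comonads $\alpha_n\colon \Sigma^n\Omega^n \to C_n$, not a morphism $\phi\colon C_n\to\Sigma^n\Omega^n$. This really is the correct Eckmann--Hilton dual of May's $M_n\to\Omega^n\Sigma^n$: just as May's monad map is built from the $\mathcal C_n$-\emph{algebra} structure on $\Omega^n\Sigma^n X$ via the universal property of the free algebra $M_n(X)$, the comonad map here is built from the $\mathcal C_n$-\emph{coalgebra} structure on $\Sigma^n\Omega^n X$ via the universal property of the cofree coalgebra $C_n(X)$, and this forces the target to be $C_n$. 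The paper does construct a pointwise homotopy inverse $\Psi\colon C_n(X)\to\Sigma^n\Omega^n X$, but it is explicitly only a retraction of pointed spaces, \emph{not} a morphism of comonads. Consequently your whole mechanism---pushing the $C_n$-cobar resolution forward along $\phi$ to a $\Sigma^n\Omega^n$-cobar resolution, and reading off a $\Sigma^n\Omega^n$-coalgebra structure on $X$ directly---does not go through: the levelwise comparison maps point the wrong way, and the ``(homotopy) section of the adjunction counit'' you claim is not available for free.

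The paper circumvents this by working with the change-of-comonad adjunction. Since $\alpha_n$ points from $\Sigma^n\Omega^n$ into $C_n$, one passes from $C_n$-coalgebras to $\Sigma^n\Omega^n$-coalgebras via the \emph{right adjoint} $\alpha_n^!$ to the pushforward $(\alpha_n)_*$, which is computed as an equalizer. The geometric input is then remarkably light: $\Sigma^n$ preserves equalizers (an elementary point-set check), so the retract data from the approximation theorem descends to make the counit $(\alpha_n)_*\alpha_n^!(X)\to X$ a retract of pointed spaces; and any $\Sigma^n\Omega^n$-coalgebra is \emph{literally} an $n$-fold suspension (again because $\Sigma^n$ commutes with the relevant equalizer), identifying $\alpha_n^!(X)\cong\Sigma^n\Gamma^n(X)$. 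No totalizations appear, and the obstacle you correctly flagged---commuting $\Sigma^n$ past a homotopy limit---is replaced by the trivial fact that $\Sigma^n$ commutes with a single strict equalizer. The paper also supplies a second, more hands-on proof: inside any $C_n$-coalgebra $X$ one isolates the subcoalgebra $S(X)$ of points whose cubical support is a singleton, and shows directly that $S(X)$ is both a $\Sigma^n\Omega^n$-coalgebra and a deformation retract of $X$.
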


Together, 
our theorems \ref{teo Intro 0}
 and \ref{teo Intro 1} 
provide the following intrinsic characterization of $n$-fold reduced suspensions as $\mathcal C_n$-coalgebras. 

\begin{crl*} \em
    Every $n$-fold reduced suspension is a $\mathcal C_n$-coalgebra, and if a pointed space is a $\mathcal C_n$-coalgebra then it is homotopy equivalent to an $n$-fold reduced suspension. 
\end{crl*}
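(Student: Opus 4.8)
The plan is to assemble the Corollary directly out of Theorems~\ref{teo Intro 0} and~\ref{teo Intro 1}, since each half of the statement is essentially a reformulation of one of these results; the only genuine work lies in reconciling the weak and homotopy equivalences that appear. First I would dispatch the forward implication. By definition, a $\mathcal{C}_n$-coalgebra structure on a pointed space $Z$ is an operad map $\mathcal{C}_n \to \coend_Z$. For $Z = \Sigma^n X$ an $n$-fold suspension, Theorem~\ref{teo Intro 0} produces precisely such a map,
\begin{equation*}
    \nabla\colon \mathcal{C}_n \longrightarrow \coend_{\Sigma^n X},
\end{equation*}
so every $n$-fold suspension is a $\mathcal{C}_n$-coalgebra with no further argument needed.

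For the reverse implication I would take an arbitrary $\mathcal{C}_n$-coalgebra $X$ and feed it to Theorem~\ref{teo Intro 1}. This furnishes a pointed space $\Gamma^n(X)$ together with a map $r\colon \Sigma^n\Gamma^n(X) \to X$ that is simultaneously a weak equivalence and a retraction of pointed spaces; write $s\colon X \to \Sigma^n\Gamma^n(X)$ for the corresponding section, so that $r s = \id_X$. Exhibiting $X$ as homotopy equivalent to the $n$-fold suspension $\Sigma^n\Gamma^n(X)$ is exactly the content of the second half of the Corollary, and indeed the concluding sentence of Theorem~\ref{teo Intro 1} already records this. The one point deserving care is the passage from the \emph{weak} equivalence $r$ to an honest homotopy equivalence.

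This promotion is the main (and only) obstacle, and I expect to resolve it using the retract structure. The composite $e := s r \colon \Sigma^n\Gamma^n(X) \to \Sigma^n\Gamma^n(X)$ is a strict idempotent, since $e^2 = s(rs)r = sr = e$, and it is a weak equivalence because $r$ is and $s$ is too (by two-out-of-three applied to $rs = \id_X$). Passing to the homotopy category, $[e]$ becomes an idempotent isomorphism, and an idempotent isomorphism in any category equals the identity; hence $e = sr \simeq \id$. Together with $rs = \id_X$, this makes $r$ a homotopy equivalence with inverse $s$, so $X \simeq \Sigma^n\Gamma^n(X)$ has the homotopy type of an $n$-fold suspension. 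The residual item to verify is that $[e]$ is genuinely invertible in the homotopy category, that is, that the weak equivalence $e$ is a homotopy equivalence; this holds once the spaces involved have the homotopy type of CW complexes via Whitehead's theorem, and I would confirm that the construction underlying Theorem~\ref{teo Intro 1} stays within this class (or, equivalently, work in a model structure on pointed spaces where weak equivalences between such objects are automatically homotopy equivalences).
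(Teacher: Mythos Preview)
Your approach is essentially the paper's: the Corollary is simply Theorems~\ref{teo Intro 0} and~\ref{teo Intro 1} concatenated, and the paper treats it as such without further argument. The forward implication you handle exactly as the paper does.

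Where you diverge slightly is in your careful treatment of the weak-versus-homotopy equivalence issue in the reverse implication. You are right to flag it, and your idempotent-isomorphism argument is sound once you know $e=sr$ is a homotopy equivalence; but as you note, that last step leans on Whitehead's theorem and hence on a CW hypothesis you do not verify. The paper does not go this route. Instead, the conclusion ``has the homotopy type of an $n$-fold reduced suspension'' is already packaged into the statement of Theorem~\ref{teo Intro 1}, and its justification (see Section~\ref{sec : The one point subcoalgebra}) comes from the \emph{explicit} homotopy $\mathcal{H}$ built in the proof of the approximation theorem: the retraction $X\to S(X)$ onto the single-point-support subcoalgebra is shown to be a genuine homotopy equivalence by transporting $\mathcal{H}$ along the coalgebra structure map. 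So the paper avoids Whitehead entirely by working with explicit homotopies rather than abstract weak equivalences. Your strategy would also work provided you can confirm the CW hypothesis, but the paper's explicit construction is cleaner and requires no such assumption.
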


It is worth noting that this result already exists at the level of $\Sigma^n\Omega^n$-coalgebras,
see Theorem \ref{teo: SuspensionLoops coalgebras}.

\smallskip

Let us turn our attention to the other 
celebrated result in \cite{May72},
the approximation theorem.
It constitutes an essential step for proving the recognition principle for connected $n$-fold loop spaces, 
and it is also the key to unlocking certain computations on the homology of iterated loop spaces.
Roughly speaking, 
the approximation theorem for loop spaces asserts that the free $\mathcal C_n$-algebra on a pointed connected
space $X$ is weakly equivalent to $\Omega^n\Sigma^nX$.
We also prove the Eckmann--Hilton dual of this result. 
It reads as follows.

\begin{thm} \em
	\label{teo Intro 2}
	For every $n\geq 1$, there is a natural morphism of comonads 
	$$\alpha_n : \Sigma^n \Omega^n \longrightarrow C_n.$$
	Furthermore, 
	for every pointed  space $X$, 
	there is an explicit natural deformation retract of pointed spaces
	\begin{center}
\begin{tikzcd}
\Sigma^n \Omega^n X \arrow[r, shift left] & C_n(X) \arrow[l, shift left] \arrow[loop, distance=2em, in=125, out=55]
\end{tikzcd}
	\end{center}
	In particular, $\alpha_n(X)$ is a homotopy equivalence.
\end{thm}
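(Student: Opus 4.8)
The plan is to obtain $\alpha_n$ by a formal argument, and then to promote $\alpha_n(X)$ to an explicit homotopy retract by comparing the two sides through compatible filtrations. \textbf{Construction of the comonad map.} Applying Theorem~\ref{teo Intro 0} to the space $\Omega^n X$ endows $\Sigma^n\Omega^n X$ with a natural $\mathcal C_n$-coalgebra structure, i.e.\ a natural operad map $\mathcal C_n\to\coend_{\Sigma^n\Omega^n X}$. Since $C_n$ is the cofree $\mathcal C_n$-coalgebra comonad, it satisfies the universal property that a $\mathcal C_n$-coalgebra map $Z\to C_n X$ is the same datum as a pointed map $Z\to X$, recovered by composing with the comonad counit $\pi_X\colon C_n X\to X$. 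Feeding the adjunction counit $\epsilon_X\colon\Sigma^n\Omega^n X\to X$ into this universal property yields a natural coalgebra map $\alpha_n(X)\colon\Sigma^n\Omega^n X\to C_n X$ with $\pi_X\circ\alpha_n(X)=\epsilon_X$. The counit axiom for a morphism of comonads then holds by construction; the comultiplication axiom reduces, via the universal property of $C_n(C_n X)$ as the cofree coalgebra on $C_n X$, to comparing the two composites down to $C_n X$, both of which are $\alpha_n(X)$. This is a routine diagram chase, and establishes the first assertion.

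\textbf{From weak equivalence to homotopy retract.} To build the retraction $r\colon C_n X\to\Sigma^n\Omega^n X$ and the contracting homotopy, I would compare the two sides layer by layer. The cofree coalgebra $C_n X$ carries a natural filtration whose subquotients are built from the little cubes spaces $\mathcal C_n(j)$, their symmetric group actions, and wedge powers; dually, $\Sigma^n\Omega^n X$ carries the James--Snaith filtration of a suspension. The explicit collapse (pinch) description of the coalgebra structure supplied by Theorem~\ref{teo Intro 0} shows that $\alpha_n(X)$ preserves these filtrations. The stable splitting of suspensions should then provide, on each layer, a natural section of the comparison map; since the filtrations are built from cofibrations, these sections assemble into a natural retraction $r$ together with a homotopy $\alpha_n(X)\circ r\simeq\id$, exhibiting the claimed homotopy retract and, in particular, showing that $\alpha_n(X)$ is a weak equivalence.

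\textbf{Main obstacle.} The heart of the argument, and the place I expect the real work, is the identification of the subquotients of the cofree comonad $C_n X$ with those of $\Sigma^n\Omega^n X$ and the proof that the comparison is an equivalence on each layer. This is the Eckmann--Hilton dual of the group-completion and quasifibration input in May's proof; here it should be controlled by the equivalence $\mathcal C_n(j)\simeq F(\R^n,j)$ between little cubes and configuration spaces together with a Snaith-type splitting. Two technical points will require care: the convergence of the cofree tower, since cofree coalgebras are limits and one must control connectivity to ensure the tower computes the correct homotopy type; and the naturality of the chosen splittings, so that both $r$ and the homotopy are natural in $X$, as the statement demands. I anticipate that, in contrast with May's theorem, no connectivity hypothesis on $X$ is needed, precisely because suspensions are already highly structured and the relevant splittings are available without restriction.
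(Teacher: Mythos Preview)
Your construction of $\alpha_n$ is essentially the paper's: compose the $\mathcal C_n$-coalgebra structure on $\Sigma^n\Omega^n X$ with the cofree property, then verify the comonad axioms. That part is fine.

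The retract argument, however, has a genuine gap. The filtrations you invoke do not exist in the form you describe. There is no ``James--Snaith filtration'' on $\Sigma^n\Omega^n X$ for general $X$: the James filtration lives on $\Omega\Sigma X$ and the Snaith splitting concerns $\Sigma^\infty\Omega^n\Sigma^n X$; both are statements about the \emph{free} algebra side, not about $\Sigma^n\Omega^n$. A general $\Sigma^n\Omega^n X$ is simply an $n$-fold suspension of an arbitrary space, with no combinatorial layer structure to speak of. On the comonad side the situation is equally problematic: the paper shows (Proposition~\ref{Prop:Characterization of C(X) as a subspace of Map}) that $C_n(X)$ is identified with a subspace of $\Map(\mathcal C_n(1),X)$, determined entirely by the arity-one component; there is no tower whose layers are built from $\mathcal C_n(j)$ and wedge powers in the way you suggest. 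So the proposed layer-by-layer comparison has nothing to compare.

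The paper's argument is in fact far more elementary and bypasses all of this. Using the description of $C_n(X)\subseteq\Map(\mathcal C_n(1),X)$, one associates to each $f\in C_n(X)$ a \emph{cubical support} $\operatorname{CSupp}(f)\subseteq I^n$ (the intersection of all cubes on which $f$ is nontrivial), shows it is always an $n$-rectangle, and takes its \emph{center}. This center, together with an explicit loop built from $f$, defines the retraction $\Psi\colon C_n(X)\to\Sigma^n\Omega^n X$. The homotopy $\alpha\circ\Psi\simeq\id$ is then given by an explicit ``rectilinear expansion'' of cubes that shrinks the cubical support down to its center. No filtrations, towers, or stable splittings enter; the whole proof is point-set geometry inside the cube.
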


The comonad $C_n$ in the statement above is constructed in a natural way from the little $n$-cubes operad.
It is a non-trivial 
Eckmann--Hilton dualization of May's monad associated to $\mathcal C_n$.
To our knowledge, 
this comonad has not been studied elsewhere, 
and it seems to be an exciting new object that might shed 
light on further understanding $n$-fold reduced suspensions and other objects supporting a coaction of the little $n$-cubes operad.

\medskip

Let us complete a bit more of the historical context. 
It has been known for a long time that any $(n-1)$-connected CW complex of dimension less than or equal to $(2n-1)$
has the homotopy type of a (1-fold) suspension.
In \cite{Berstein63}, \cite{saito76}, \cite{ganea69} and finally \cite{Kle97}, this result was successively improved on.
In modern language, these authors showed that an $(n-1)$-connected co-$H$-space equipped with an $A_k$ 
comultiplication which is of dimension less than or equal to $k(n-1)+3$ is a suspension. 
The case of $k=\infty$
in \cite{Kle97} can be thought of as the $E_1$-version of Theorem \ref{teo Intro 1}, although our proof 
strategy is very different. 
From a different angle,
the case of iterated suspensions considered 
as coalgebras over (a homotopical version of) the $\Sigma^n\Omega^n$-comonad was recently treated in
\cite{Blo22}, where the authors obtained a recognition principle for $(n+1)$-connected, 
$n$-fold (simplicial) suspensions.
This last result differs from our Theorem \ref{teo Intro 1} in several key respects.
Firstly;  our notions of coalgebra differ as they pass to a derived functor in the homotopy category of pointed spaces,
while we consider only $\Sigma^n\Omega^n$-coalgebras in the classical sense of
coalgebras over comonads.
Secondly; our result has the sharpest possible connectivity requirement. 
The most striking difference with all previous  scholarships is that we make
heavy use of the little $n$-cubes operad and the comonad $C_n$;  
whereas 
these objects do not seem to have appeared in previous literature on the homotopy 
theory of iterated suspensions (with the exception of \cite{ginot12} in a very different context). 
In particular, there is no approximation theorem in \cite{Blo22}.

To conclude, a few remarks are in order. 
The first remark is that
to prove our theorems 
$\ref{teo Intro 1}$ and $\ref{teo Intro 2}$, 
we do not follow an Eckmann--Hilton dual approach to May's proof in the case of iterated loop spaces. We have found a framework and proof which depends on explicit homotopies and hence avoids the use of quasi-fibrations and the construction of auxiliary spaces.
In this sense, 
our approach is technically simpler. 
The approximation of suspensions is an independent result that we believe might have potential side applications. 
Finally, most of the results of this paper could have been stated using little $n$-disks instead of little $n$-cubes.
However, using cubes significantly simplifies many of the explicit formulae that appear when proving our results,
and therefore we choose to present things this way.

\subsection*{Notation and conventions}
\label{sec : Conventions}
All topological spaces are compactly generated and Hausdorff.
We denote by $I$ the unit interval in $\R$ and by $J$ its interior: 
$$ J = (0,1) \subseteq [0,1]=I.$$
The symmetric group on $n$ letters is denoted $S_n$.

\medskip

For $X = (X,*)$ a pointed space, it will be convenient to identify the $r$-fold wedge $X^{\vee r}$ as a subspace of the cartesian product $X^{\times r}$.
To do so, consider
$$X^{\vee r} = \bigcup_{i=1}^r \{*\}\times \cdots \times \underbrace{X}_i \times \cdots \times \{*\}\subseteq  X^{\times r}.$$
A point $x$ in the $i$-th factor of the wedge $X^{\vee r}$ is therefore identified with the point $(*,...,*,x,*,...,*)$ having $x$ at its $i$-th component and the base point at all others. We further use the convention that both $X^{\vee 0}$ and $X^{\times 0}$ are equal to the base point.
Given pointed maps $\varphi_1,...,\varphi_r : X\to Y$, 
we denote by $\left(\varphi_1,...,\varphi_r\right)$ the induced map $X \to Y^{\times r}$ to the product. 
Here, we implicitly used the diagonal map $d:X \rightarrow X^{\times r}$ given by $d(x)=(x,...,x)$. 
To simplify the notation we will omit the diagonal from the notation when this is clear from the context.
If the image of this map lands in the wedge subspace $Y^{\vee r}$, we denote the corresponding restriction by $\left\{\varphi_1,...,\varphi_r\right\}$. 
Thus, the curly brackets notation emphasizes that the map lands in the wedge rather than the product.
We reserve the notation $\varphi_1 \vee \cdots \vee \varphi_r $ for the induced map $X^{\vee r} \to Y^{\vee r}$ given by 
$$\left(\varphi_1 \vee \cdots \vee \varphi_r\right) \left(*,...,*,x_i,*,...,*\right) =\left(*,...,*,\varphi_i\left(x_i\right),*,...,*\right).$$
We frequently use the identification $\Sigma^n X = S^n \wedge X$ for the $n$-fold reduced suspension of a pointed space $X$.
Thus, points in $\Sigma^n X$ will be denoted $[t,x]$, where $t\in S^n$ and $x\in X$.
Since points in the suspensions are equivalence classes, we use the square brackets notation.
From now on, we implicitly assume  all suspensions are reduced.

We assume the reader is familiar with operad theory, 
especially in topological spaces,
and we refer to \cite{Fre17}.
We use the following conventions. 
An operad $\P$ in a symmetric monoidal category $\mathcal M = \left(\mathcal M,\otimes, \mathbb{1}\right)$ 
is \emph{unitary} if $\P(0) = \mathbb{1}$,
and \emph{non-unitary} if $P(0)$ is not defined (i.e., the underlying symmetric sequence of $\P$ starts in arity $1$). 
We borrow this nomenclature from \cite[Section 2.2]{Fre17}.
We will make heavy use of the operad of little $n$-cubes $\mathcal C_n$,
considered as a unitary operad where $\mathcal C_n(0) = *$ is a single point.

\medskip

\noindent {\bf Acknowledgments:}
The authors would like to thank Sergey Mozgovoy and Jim Stasheff for useful conversations and comments,
as well as the anonymous referees for their useful comments and suggestions. 
The second author has been partially supported by the MICINN grant PID2020-118753GB-I00. 
The third author was supported by the Dutch Research Organisation (NWO) grant number VI.Veni.202.046 and NWO grant 613.001.651.
This project has received funding from the European Union’s Horizon 2020 research and innovation programme under the Marie Skłodowska-Curie grant agreement No 945322. 
\raisebox{-\mydepth}{\fbox{\includegraphics[height=\myheight]{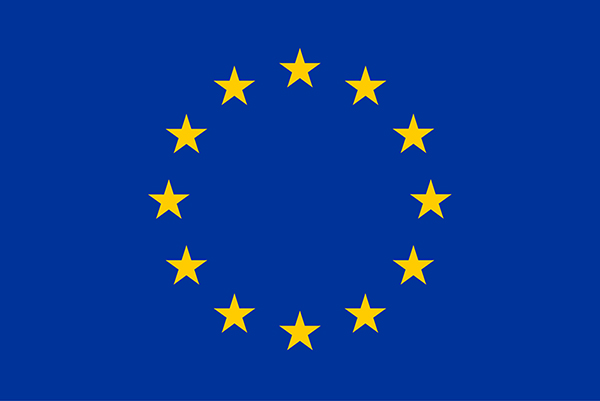}}}

\medskip

\noindent 
\textbf{MSC 2020:} 18M75, 55P40, 55P48, 55P30 

\noindent 
\textbf{Key words and phrases:} Little cubes operad, Suspensions, Coalgebras, Recognition principle, Eckmann-Hilton duality.

\smallskip

\section{Coalgebras over topological operads}

Given a unitary topological operad $\P$, 
we construct an explicit comonad $C_\P$ in \emph{pointed} spaces.
In Section \ref{sec: Topological comonads}, 
we carefully construct this comonad and study some of its basic properties.
The comonad $C_\P$ gives rise to the category of coalgebras over $\P$, 
also called  $\P$-coalgebras.
There is a second way of defining $\P$-coalgebras by using the coendomorphism operad that does not require the explicit construction of the   comonad $C_\P$.
This alternative construction has the advantage that it can be defined for all operads even when they are not necessarily unitary. The disadvantage is that it is not clear how to get an explicit comonad out of this definition.
We explain this alternative construction and show that in the case of unitary operads it gives an equivalent notion of $\P$-coalgebras in Section \ref{sec: Categories of coalgebras}.
We specialize to the case in which $\P$ is the operad $\mathcal C_n$ of little $n$-cubes in Section \ref{sec: Comonad of little cubes},
producing the central comonad of this paper.
Finally,
we prove Theorem \ref{teo Intro 0} in Section \ref{sec:Iterated suspensions are coalgebras} -
that 
the  $n$-fold reduced suspension of a pointed space is naturally a $\mathcal C_n$-coalgebra.
Therefore, the $n$-fold reduced suspensions are the paradigmatic examples of $\mathcal C_n$-coalgebras.

\begin{remark}
In our constructions of coalgebras,
we are mixing pointed and unpointed spaces.
All our operads live in the category of unpointed spaces while the coalgebras over the operads and associated comonads 
live in the category of pointed spaces.
\end{remark}

\subsection{Construction of topological comonads}\label{sec: Topological comonads}

In this section, we construct the mentioned  comonad $C_\P$ in pointed spaces out of a unitary operad $\P$ in unpointed spaces.

\medskip

Let us first establish some preliminary notation.
Denote 
$$
\mathsf{Top} = \left(\mathsf{Top},\times,\{*\}\right) \quad \textrm{ and } \quad \mathsf{Top_*} = \left(\mathsf{Top_*}, \vee, \{*\}\right)
$$ the symmetric monoidal categories of spaces endowed with the cartesian product $\times$, 
and pointed spaces endowed with the wedge  product $\vee$, respectively.
Let $\P$ be a unitary operad in $\mathsf{Top}$ with composition map $\gamma$ and denote the unitary operation by $* \in \P(0)$. 
Define the \emph{restriction operators}, for all $n\geq 1$ and  $1\leq i \leq n$, 
by inserting the unique point 
$* \in \P(0)$ at the $i$-th component: 
\begin{center}
    \begin{tikzcd}[row sep = tiny]
\P(n) \arrow[r, "d_i"]    & \P(n-1)                                        \\
\theta \arrow[r, maps to] & {\gamma\left(\theta;\id,...,*,...,\id\right).}
\end{tikzcd}
\end{center}
Let $X\in \mathsf{Top_*}$. The \emph{wedge collapse maps}, 
defined for all $n\geq 1$ and $1\leq i \leq n$, are given by collapsing the $i$-th factor in the wedge as follows:
\begin{center}
    \begin{tikzcd}[row sep = tiny]
X^{\vee n} \arrow[r, "\pi_i "]     & X^{\vee (n-1)}                    \\
{\left(x_1,...,x_n\right)} \arrow[r, maps to] & {\left(x_1,...,\widehat{x_i},...,x_n\right)}.
\end{tikzcd}
\end{center}
Here, the $r$-fold wedge is seen inside the $r$-fold cartesian product, 
and the notation $\widehat x_i$ means that we are sending the $i$-th component to the base point.

\begin{notation}
If $\P$ is a unitary operad  and $X$ is a pointed space, 
we denote $$\operatorname{Tot}\left(\P,X\right) := \prod_{n\geq 0} \Map_{S_n}\left(\P(n),X^{\vee n}\right).$$
Each space $\Map_{S_n}\left(\P(n),X^{\vee n}\right)$
consists of the equivariant maps from the arity $n$ component of $\P$ equipped with its usual $S_n$-action 
to the $n$-fold wedge of $X$ with itself endowed with the $S_n$-action that permutes the coordinates 
of its points by $\sigma \cdot \left(x_1,...,x_n\right) = \left(x_{\sigma(1)},...,x_{\sigma(n)}\right)$.
We frequently disregard the $0$-th component in the infinite product above, 
since the mapping space $\Map(\P(0),X^{\vee 0})$ is just a point. It can therefore be ignored in all computations that follow.
Thus, the point $\left(f_0,f_1,f_2,...\right) \in \operatorname{Tot}\left(\P,X\right)$ 
will be denoted $\left(f_1,f_2,...\right)$.
The topology on $\operatorname{Tot}\left(\P,X\right)$ is the usual product topology.
\end{notation}

We are ready to define the underlying endofunctor of our comonad $C_{\mathcal P}$.

\begin{definition}
\label{Def: The endofunctor}
Let $\P$ be a unitary operad in $\mathsf{Top}$.
Define the endofunctor in pointed spaces
\begin{center}
    \begin{tikzcd}[row sep = tiny]
C_\P : \mathsf{Top_*} \arrow[r] & \mathsf{Top_*}     \\
X \arrow[r, maps to]             & C_\P\left(X\right),
\end{tikzcd}
\end{center}
where
$$C_{\P}(X) = \left\{\alpha = \left(f_1,f_2,...\right)\in \operatorname{Tot}\left(\P,X\right) \ \mid \   \pi_i f_{n} =  f_{n-1}d_i\textrm{ for all $n\geq 2$ and $1\leq i\leq n$ }\right\}$$ is the subspace of $\operatorname{Tot}\left(\P,X\right)$ 
formed by those sequences $\left(f_1,f_2,...\right)$ that commute with the restriction operators and wedge collapse maps. 
That is, for all $n\geq 2$ and $1\leq i \leq n$, the following diagram commutes:
\begin{center}
	\begin{tikzcd}
		\P(n) \arrow[d, "d_i"'] \arrow[r, "f_n"] & X^{\vee n} \arrow[d, "\pi_i"'] \\
		\P(n-1) \arrow[r, "f_{n-1}"]             & X^{\vee (n-1)}                
	\end{tikzcd}
\end{center}
The base point of $C_\P\left(X\right)$ is the 
sequence $\alpha = \left(f_1,f_2,...\right)$ where each $f_r$ has image the base point of $X^{\vee r}$. 
Since the base point of the wedge $X^{\vee r}$ is fixed by the $S_r$-action, 
the base point is well-defined.
If $h:X\to Y$ is a pointed map, then $C_\P\left(h\right) : C_\P\left(X\right)\to C_\P\left(Y\right)$ 
 is defined by 
 $$C_\P\left(h\right)(\alpha) = \left(h\circ f_1, \ \left(h\vee h\right)\circ f_2 , ..., \left(h \vee ... \vee h\right) \circ f_n,... \right).$$
The $n$th term in the sequence above is given by
 $$\left(h\vee ... \vee h\right) \circ f_n :\P(n) \xrightarrow{\ f_n\ } X^{\vee n} \xrightarrow{\ h \vee ... \vee h\ } Y^{\vee n}.$$
\end{definition}

\begin{remarks} ~
\begin{enumerate}
\item The idea of defining  
$C_{\mathcal P}(X)$ above as a subspace of $\operatorname{Tot}\left(\P,X\right)$ 
arises from an Eckmann--Hilton  dualization of May's definition of the monad associated to an operad \cite{May72}.
Recall that the monad $M_n$ in pointed spaces defined in {\it{loc. cit.}} by using the little $n$-cubes operad is given  by
$$M_n\left(X\right) = \left(\coprod_{r\geq 0} \mathcal C_n(r) \times X^{\times r}\right)/\sim,$$
where $\sim$ is the equivalence relation that glues level $r$ to level $r+1$ by combining the restriction operators with the insertion of the base point, $(d_i(c),y) \sim (c,s_i(y))$, and imposing the compatibility with the group action, $(c\cdot \sigma, y) \sim (c,\sigma \cdot y)$.
\footnote{Here, $(c,y) \in \mathcal C_n(r)\times X^{\times (r-1)}$, $s_i(y)$ is the point of $X^{\times r}$ where we insert the base point at the $i$-th component, and $\sigma\in S_ r.$}
\item  The compatibility condition of a sequence 
$\alpha \in \operatorname{Tot}\left(\P,X\right)$ with the restriction operators and wedge collapse maps, 
\begin{equation}\label{ecu:Compatibility with restriction operator and wedge collapse}
	\pi_if_n = f_{n-1}d_i, \quad \textrm{for all $n\geq 1$ \ and\  $1\leq i \leq n$,}	
\end{equation} 
is the precise condition needed to incorporate a counit to the coalgebras in pointed spaces that result from the comonad $C_\P$. 
See Remark \ref{remark: Counital coalgebras} for further details.
\item The comonad $C_\P$ can be constructed in more general symmetric monoidal categories.
For the applications that we give in this paper, we are only interested in the category of topological spaces.
\end{enumerate}
\end{remarks}

Our next goal is to endow the endofunctor $C_{\mathcal P}$ with a comonad structure.
Before doing so, 
we make two elementary observations that will simplify some of our proofs later on.
We will use the following notation: if $h_1,...,h_r$ is a family of maps such that the composition $$h_1 \circ \cdots \circ h_{i-1}\circ h_{i+1} \circ \cdots \circ h_r$$ makes sense, then we denote the expression above  by $$h_1 \cdots \widehat{h_i} \cdots h_r.$$
That is, 
the hat $\widehat{(-)}$ on top of the $i$-th map indicates that this component is removed from the composition.
The first observation is the following.

\begin{lemma} 
	\label{Lemma:Determined by 1st comp}
	A sequence 
	$ \left(f_1,f_2,...\right) \in C_\P\left(X\right)$ is determined by its first component $f_1:\P(1)\to X$.
	That is, we can recursively write, for all $r\geq 2$,
	\begin{equation*}
		f_r = \left\{
		f_1 \widehat{d_1}d_2\cdots d_r \ ,\
		f_1 d_1\widehat{d_2}d_3\cdots d_r \ ,...,\
		f_1 d_1d_2\cdots d_{r-1}\widehat{d_r}
		\right\},
	\end{equation*}
where the $d_i$'s are the maps that insert $*\in\P(0)$ into the $i$-th entry.
\end{lemma}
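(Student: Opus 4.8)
The plan is to unwind the defining compatibility relation $\pi_i f_r = f_{r-1}\, d_i$ one coordinate at a time and feed the result into an induction on $r$. First I would introduce the coordinate projections $p_k \colon X^{\times r} \to X$ and record how the wedge collapse interacts with them: since $\pi_i$ deletes the $i$-th coordinate, one has $p_k \pi_i = p_k$ for $k < i$ and $p_k \pi_i = p_{k+1}$ for $k \ge i$, viewed as maps $X^{\times r}\to X$. Composing the compatibility square on the left with $p_k$ then yields, writing $(f_r)_k := p_k f_r$ for the $k$-th component, the two elementary recursions $(f_r)_k = (f_{r-1})_k\, d_i$ whenever $k < i$, and $(f_r)_k = (f_{r-1})_{k-1}\, d_i$ whenever $k > i$. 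In particular, taking $i = r$ gives $(f_r)_k = (f_{r-1})_k\, d_r$ for every $k < r$, while taking $i = 1$ gives $(f_r)_r = (f_{r-1})_{r-1}\, d_1$.

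With these in hand I would prove, by induction on $r$, the componentwise statement $(f_r)_k = f_1 \circ (d_1 \cdots \widehat{d_k} \cdots d_r)$ for all $1 \le k \le r$; reassembling the $r$ components into a single map landing in the wedge is then precisely the curly-bracket expression in the statement. The base case $r = 2$ consists of the two relations $(f_2)_1 = f_1 d_2$ and $(f_2)_2 = f_1 d_1$, read off directly. For the inductive step, the components with $k < r$ are immediate: by the $i = r$ recursion and the inductive hypothesis, $(f_r)_k = (f_{r-1})_k\, d_r = f_1 (d_1\cdots\widehat{d_k}\cdots d_{r-1})\, d_r$, which is already the claimed ordered composite. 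The determinacy assertion follows formally once all components are expressed through $f_1$ by a fixed string of restriction operators.

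The one genuinely delicate point — and the step I expect to be the main obstacle — is the top component $k = r$, where the only admissible choice forces $(f_r)_r = (f_{r-1})_{r-1}\, d_1 = f_1 (d_1\cdots d_{r-2})\, d_1$, whose trailing $d_1$ does not match the target composite $f_1(d_1 \cdots d_{r-1})$ on the nose. To reconcile the two I would first establish the simplicial-type identities $d_i d_j = d_{j-1} d_i$ for $i < j$, which are a direct consequence of the associativity and equivariance of $\gamma$ together with $\P(0) = \{*\}$ (inserting two nullary operations commutes up to the expected reindexing). These relations show that a composite of restriction operators $\P(r) \to \P(1)$ depends only on which single input survives, not on the order in which the insertions are carried out; both $d_1\cdots d_{r-2} d_1$ and $d_1 \cdots d_{r-1}$ are readily checked to keep precisely the $r$-th input, hence coincide. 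Conceptually this is just the statement that $n \mapsto \P(n)$ and $n \mapsto X^{\vee n}$ are presheaves on the category of finite sets and injective maps, with the $d_i$ and $\pi_i$ as their structure maps, and that a natural transformation between such presheaves is determined by its value at a one-element set — the composite with $p_k$ being forced to equal $f_1$ precomposed with the deletion keeping slot $k$. The explicit induction above is simply a hands-on incarnation of this principle.
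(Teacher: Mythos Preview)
Your proof is correct and follows essentially the same idea as the paper's: express each component $(f_r)_k$ via wedge projections and then peel off the compatibility relation $\pi_i f_r = f_{r-1} d_i$ one step at a time until only $f_1$ remains. The only organizational difference is that the paper writes the $k$-th projection as the specific composite $q_k = \pi_1\pi_2\cdots\widehat{\pi_k}\cdots\pi_r$ (collapsing in decreasing index order, skipping $k$) and unwinds all components simultaneously; with that ordering each $\pi_j$ converts directly into the matching $d_j$, so the target string $f_1\,d_1\cdots\widehat{d_k}\cdots d_r$ falls out immediately without ever invoking the simplicial identities. Your inductive setup forces the choice $i=1$ for the top component and hence the small detour through $d_i d_j = d_{j-1} d_i$; this is perfectly fine, but you could sidestep it by applying the paper's ordering to that one case.
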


Recall that the term on the right hand side above follows the notation from Section \ref{sec : Conventions}.

\begin{proof}
	Let  
	$\alpha = \left(f_1,f_2,...\right) \in C_\P\left(X\right).$ 
    Before we give a general proof of the lemma we first work out the $r=2$ case, since this makes the general argument clearer.
	Let $$f_2 : \P(2)\to X\vee X$$
	be the second component of $\alpha$.
	Denote by $q_i: X\vee X \to X$ the projection onto the $i$-th factor of the wedge, for $i=1,2.$ 
	There are  identifications $q_i = \pi_{3-i}$, 
	where $\pi_1,\pi_2 : X\vee X \to X$ are the corresponding wedge collapse maps.
Then, 
	\begin{align*}
		f_2 &= \left\{q_1f_2,q_2f_2\right\} = \left\{\pi_2f_2,\pi_1f_2\right\} = \left\{f_1d_2,f_1d_1\right\} = \left\{f_1\widehat{d_1}d_2, f_1d_1\widehat{d_2}\right\}.
	\end{align*}
In the third equality above, 
we used Equation (\ref{ecu:Compatibility with restriction operator and wedge collapse}) for $n=2$.
The proof for general $f_r$ follows a slight generalization of the case just proven, where we recursively use the identities of Equation (\ref{ecu:Compatibility with restriction operator and wedge collapse}) for all $n$ between $2$ and $r$. 
Thus, let $$f_r:\P(r)\to X^{\vee r}$$ be the $r$th component of $\alpha.$ 
Denote by $q_i : X^{\vee r}\to X$ the projection onto the $i$-th factor of the wedge, for $i=1,...,r$.
There are identifications $$q_i = \pi_1  \pi_2  \cdots  \widehat{\pi_i} \cdots \pi_r, \quad \textrm{ for all } i = 1,...,r.$$
Recall the hat $\widehat{\pi_i}$ indicates that we omit the $i$-th term.
There is a slight but harmless abuse of notation above, since the $\pi_j$'s that appear in the expression of $q_i$ have different domains.
Then, 
\begin{align*}
	f_r &= \left\{q_1f_r, \ q_2f_r,...,\ q_rf_r\right\} \\[0.25cm]
	&= \left\{\widehat{\pi_1}\pi_2  \pi_3  \cdots \pi_r f_r \ , \ \pi_1 \widehat{\pi_2} \pi_3 \pi_4 \cdots  \pi_r f_r\ ,\  ..., \pi_1  \pi_2  \cdots  \pi_{r-1} \widehat{\pi_r} f_r \right\}\\[0.25cm]
	&= \left\{\widehat{\pi_1}\pi_2  \pi_3  \cdots \left(\pi_r f_r\right)\ , \ \pi_1 \widehat{\pi_2} \pi_3 \pi_4 \cdots  \left(\pi_r f_r\right)\ , \ ..., \pi_1  \pi_2  \cdots  \left(\pi_{r-1}  f_r\right) \right\}\\[0.25cm]
	&= \left\{\widehat{\pi_1}\pi_2  \pi_3  \cdots \left(f_{r-1}d_r\right)\ , \ \pi_1 \widehat{\pi_2} \pi_3 \pi_4 \cdots  \left(f_{r-1}d_r\right)\ , \ ..., \pi_1  \pi_2  \cdots  \left(  f_{r-1}d_{r-1}\right) \right\}\\[0.25cm]
	&= \qquad \cdots \\[0.25cm]
	&= \left\{\widehat{\pi_1}\pi_2\left(f_2 d_3\cdots d_r\right) \ , \ \pi_1 \widehat{\pi_2} \left(f_2 d_3 \cdots d_r\right) \ , \,  \pi_1 f_2\left( d_2\widehat{d_3}d_4\cdots d_r\right)\ , \ \pi_1 f_2 \left(d_2 \cdots d_{r-1}\widehat{d_r}\right) \right\}\\[0.25cm]
	&= \left\{
	f_1 \widehat{d_1}d_2\cdots d_r \ ,\
	f_1 d_1\widehat{d_2}d_3\cdots d_r \ ,...,\
	f_1 d_1d_2\cdots d_{r-1}\widehat{d_r}
	\right\}.
\end{align*}
This completes the proof.
\end{proof}

The result above tells us that any sequence 
$\alpha  = \left(f_1,f_2,...\right)\in C_{\P}(X)$ 
can be written as
$$\alpha = \left(f_1,f_2,f_3,...\right) = \left(f_1, \left\{f_1d_2,f_1d_1\right\}, \left\{f_1d_2d_3, f_1d_1d_3,f_1d_1d_2\right\},...\right).$$
However, it does not assert that any map $\P(1)\to X$ can be extended 
to a sequence in $ C_\P\left(X\right)$ whose first component is the given map.
In fact, that is usually not the case.
Below, we give a characterization 
of when such a map extends for $\P$ a unitary operad in topological spaces.

Let us point out the second observation.
We need the following notation.
If $X$ is a pointed space, and $f: \P(1)\to X$ is any map, 
define for all $r\geq 2$ and $1\leq i \leq r$ the collection of maps 
$$f_r^i := f\left(d_1\cdots \widehat {d_i} \cdots d_r\right) : \P(r)\to X.$$
The map 
$$f_r := \left\{f_r^1,...,f_r^r\right\} : \P(r)\to X^{\vee r}$$
is then defined by first applying the diagonal map $\P(r) \rightarrow \P(r)^{\times r}$ and then the product of the $f_r^i$. 
The map above usually lands in the product but it restricts to the wedge if, and only if,
the map $f$ belongs to the underlying space of the comonad.

\begin{proposition}
\label{Prop:Characterization of C(X) as a subspace of Map}
Let $X$ be a pointed space.
Then the space $C_\P\left(X\right)$ is homeomorphic to the subspace of  
\ $\Map\left(\P(1),X\right)$
given by all those maps $f_1:\P(1)\to X$ such that
for any $r\geq 2$ and $c\in \P(r)$, it follows that $f_r^i(c)=*$ is the base point for all $i$ except at most one.
In particular, the image of the map $$f_r := \left(f_r^1,...,f_r^r\right) : \P(r)\to X^{\times r} $$
is contained in the subspace $X^{\vee r} \subseteq X^{\times r}$.
Furthermore, 
each $$f_r: \P(r)\to X^{\vee r}$$ is $S_r$-equivariant.
Under this identification, 
the value $C_\P\left(\phi\right)$ on a pointed map $\phi:X\to Y$ is the postcomposition with $\phi$: 
	\begin{center}
	    \begin{tikzcd}[row sep = tiny]
C_\P\left(X\right) \arrow[r, "C_\P\left(\phi\right)"] & C_\P\left(Y\right)                 \\
f \arrow[r, maps to]                             & C_\P\left(\phi\right)\left(f\right) = \phi \circ f.
\end{tikzcd}
	\end{center}
\end{proposition}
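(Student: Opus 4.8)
The plan is to exhibit the projection onto the first component as the desired homeomorphism and then to identify its image. Concretely, I would consider the continuous map
\[
\Phi : C_\P(X) \longrightarrow \Map(\P(1),X), \qquad \alpha = (f_1,f_2,\dots) \longmapsto f_1,
\]
which is the restriction of the projection from $\operatorname{Tot}(\P,X)$ onto its arity-one factor, and which is injective by Lemma \ref{Lemma:Determined by 1st comp}. It is convenient to note that $f_r^i = f_1\circ\delta_i$, where $\delta_i:\P(r)\to\P(1)$ is the operadic composite that inserts the nullary operation $*\in\P(0)$ into every input except the $i$-th; by associativity of $\gamma$ this map does not depend on the order in which the insertions are performed. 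The argument then splits into identifying $\operatorname{im}(\Phi)$, checking that $\Phi$ is a homeomorphism onto it, and verifying the stated formula for $C_\P(\phi)$.

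For the image, the forward inclusion is immediate: if $\alpha\in C_\P(X)$, then Lemma \ref{Lemma:Determined by 1st comp} gives $f_r=\{f_r^1,\dots,f_r^r\}$, and since by definition $f_r$ takes values in $X^{\vee r}$, at every point of $\P(r)$ at most one of the $f_r^i$ avoids the base point, which is exactly the asserted condition. Conversely, given $f_1$ satisfying the condition I would define $f_r$ as the (then well-defined) corestriction to $X^{\vee r}$ of the product map $F_r:=(f_r^1,\dots,f_r^r):\P(r)\to X^{\times r}$, and I must check that the resulting sequence $(f_1,f_2,\dots)$ genuinely lies in $C_\P(X)$, i.e. that each $f_r$ is $S_r$-equivariant and that the compatibility $\pi_i f_n = f_{n-1}d_i$ of (\ref{ecu:Compatibility with restriction operator and wedge collapse}) holds.

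Both of these hold at the level of the product-valued maps $F_r$ as formal consequences of the operad axioms, so that the wedge condition is the only genuine constraint. Equivariance follows from the equivariance of $\gamma$, which relates $\delta_i(\theta\cdot\sigma)$ to $\delta_{\sigma^{-1}(i)}(\theta)$ and thereby reproduces precisely the coordinate permutation defining the $S_r$-action on $X^{\vee r}$. The compatibility identity I would verify coordinatewise, reducing it to the simplicial-type relations $d_a d_b = d_{b-1}d_a$ for $a<b$ among the restriction operators (themselves a consequence of associativity of $\gamma$ and of $*$ being nullary). This index bookkeeping, which is essentially the computation of Lemma \ref{Lemma:Determined by 1st comp} run in reverse, is the one genuinely laborious step, and is where I expect the main obstacle to lie; everything else is formal.

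It remains to check that $\Phi$ is a homeomorphism and to read off functoriality. Continuity of $\Phi$ is clear, being a projection for the product topology; its inverse sends $f_1$ to $(f_1,f_2,\dots)$, and since each $f_r$ is built from $f_1$ by postcomposition with the fixed continuous data $\delta_i$ together with the diagonal, it depends continuously on $f_1$, so the inverse is continuous into the product and $\Phi$ is a homeomorphism onto its image. Finally, for a pointed map $\phi:X\to Y$ the first component of $C_\P(\phi)(\alpha)$ is $\phi\circ f_1$ by Definition \ref{Def: The endofunctor}; since $\phi\vee\cdots\vee\phi$ commutes with the curly-bracket construction, sending $\{g^1,\dots,g^r\}$ to $\{\phi g^1,\dots,\phi g^r\}$, the entire image sequence is the one determined by $\phi\circ f_1$. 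Hence under $\Phi$ the map $C_\P(\phi)$ becomes $f\mapsto\phi\circ f$, as claimed.
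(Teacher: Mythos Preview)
Your proposal is correct and follows essentially the same approach as the paper: both directions hinge on Lemma \ref{Lemma:Determined by 1st comp} for the forward inclusion and on the simplicial identities $d_id_j=d_{j-1}d_i$ for the coordinatewise verification of $\pi_if_r=f_{r-1}d_i$ in the converse. Your write-up is in fact more complete than the paper's own proof, which does not explicitly treat the $S_r$-equivariance of $f_r$, the continuity of the inverse to $\Phi$, or the functoriality assertion for $C_\P(\phi)$; you address all three.
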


\begin{proof}
The fact that 
for any $r\geq 2$ and $c\in \P(r)$, it follows that $f_r^i(c)=*$ is the base point for all $i$ except at most one,
implies that the map 
$$f_r =\left(f_r^1 ,..., f_r^r\right) : \P(r)\to X^{\times r}$$ 
has its image in the wedge.
Thus, it is correct to 
 write $f_r = \left\{f_r^1 , ... , f_r^r\right\}$.

\fbox{\strut \ $\Rightarrow$\ }
Let
$\left(f_1,f_2,...\right)\in  C_\P\left(X\right)$. We must show that $f^i_r$ evaluated at any $c\in \P(r)$
is the base point for all $i$ except at most one. 
It is a straightforward consequence of Lemma \ref{Lemma:Determined by 1st comp} that 
the component $f_1$ of the sequence gives rise to the family of maps  $\left\{f_r^i\right\}$ of the statement, 
with $f_r = \left\{f_r^1,...,f_r^r\right\}$. 
So, this implication follows.

\fbox{\strut \ $\Leftarrow$ \ }
Let $f_1:\P(1)\to X$ be a map giving rise to the family of maps $\left\{f_r^i\right\}$ and $f_r$ satisfying the hypotheses of the statement. 
We show next that this indeed belongs to $C_\P(X)$.
Form the sequence 
$$\left(f_1,f_2,...\right) \in \operatorname{Tot}\left(\P,X\right).$$
It suffices to check that for every $r\geq 2$ and $1\leq i \leq r$, the identity $f_{r-1}d_i = \pi_if_r$ holds.
To do so, we will make use of the following fact and notation for maps induced onto a wedge of pointed spaces: 
given pointed spaces $W,Y,Z$ and  maps $\varphi_1,...,\varphi_r : Y \to Z$  
such that $\left\{\varphi_1,...,\varphi_r\right\} : Y \to Z^{\vee r}$ is well-defined, 
then for any map $g:W \to Y$, we have
$$\left\{\varphi_1,...,\varphi_r\right\} \circ g = \left\{\varphi_1 \circ g,...,\varphi_r \circ g\right\} 
: W \to Z^{\vee r}.$$
Thus, fix some $r\geq 2$ and $1\leq i\leq r$.
On the one hand, 
\begin{align}\label{ecu:Comp1}
	\pi_i f_r &= \pi_i \left\{f_1 \widehat{d_1}\cdots d_r \ ,\ ...\ ,\ f_1 d_1\cdots \widehat{d_r} \right\} = \left\{f_1 \widehat{d_1}\cdots d_r \ ,\ ...\ , \cancel{f_1d_1\cdots \widehat{d_i}\cdots d_r} \ , \ ... \ , \ f_1 d_1\cdots \widehat{d_r} \right\}.
\end{align}
Above, the strike-through indicates that the $i$-th component is not part of the sequence.
On the other hand,
\begin{align}\label{ecu:Comp2}
	f_{r-1}d_i &= \left\{f_1 \widehat{d_1}\cdots d_{r-1} \ ,\ ...\ ,\ f_1 d_1\cdots \widehat{d_{r-1}} \right\}\circ d_i 
	= \left\{f_1 \widehat{d_1}\cdots d_{r-1}\circ d_i \ ,\ ...\ ,\ f_1 d_1\cdots \widehat{d_{r-1}}\circ d_i \right\}.
\end{align}	
It suffices to check that, for any $j$ with  $1\leq j \leq r-1$, 
the $j$-th component of the sequence $(\ref{ecu:Comp1})$ is equal to the $j$-th component of the sequence $(\ref{ecu:Comp2})$.
This is a straightforward check, taking into account whether  $j\leq i$ or $j\geq i$, 
and using the simplicial identities satisfied by the $d_k$'s - namely, that $d_id_j=d_{j-1}d_i$ for $i<j$.
\end{proof}

Proposition \ref{Prop:Characterization of C(X) as a subspace of Map} above is very useful,
as we will see in Section \ref{sec:approximation theorem}.
Remark that this result identifies the space $C_\P(X)$ as the subspace of $\Map\left(\mathcal P(1),X\right)$ formed by those
maps satisfying an extra property. 
Bear in mind that, under this identification, the evaluation of $C_\P$ on a morphism $\phi:X\to Y$ corresponds to the postcomposition with $\phi$.

\medskip

Before going on, we introduce some notation that will be useful later.

\begin{notation}
We will occasionally use the following notation for the composition of the restriction operators:
	$$D_i = d_1 \cdots \widehat{d_i} \cdots d_r : \P(r) \to \P(1).$$
	These choices will simplify the formulae in what follows, making our results more readable.
	Remark also that, for any operation $\theta\in \P(r)$, the resulting operation $D_i(\theta) \in \P(1)$ is exactly $$D_i(\theta) = \gamma (\theta;*,...,*,\underbrace{\id_{\P}}_i,*,...,*),$$ where $\gamma$ is the composition map of $\P$, the element $\id_{\P}\in \P(1)$ is the operadic unit,
	and $* \in \P(0)$ is the unitary operation.
	In other words, $D_i(\theta)$ retains the unary operation determined by the $i$-th input of $\theta.$
	For example, if $\mathcal P=\mathcal C_n$ is the little $n$-cubes operad 
	and $\theta = \left(c_1,...,c_r\right)\in \mathcal C_n(r)$ is a configuration of $r$ little $n$-cubes, 
	then $D_i(\theta) = c_i$ 
	is the $i$-th little $n$-cube of the configuration seen as an element of $\mathcal C_n(1)$.
\end{notation}

Let us finally equip the endofunctor $C_{\mathcal P}$ 
with natural transformations $\varepsilon : C_{\mathcal P} \to \id_{\mathsf{Top_*}}$ and $\Delta : C_{\mathcal P} \to C_{\mathcal P} \circ C_{\mathcal P}$
that make it a comonad.
From now on, to lighten notation,
we denote $C = C_\P$,
assuming understood the operad $\P$.

\begin{definition}
	\label{def: Comonad structure maps in Top}
Let $C=C_\P : \mathsf{Top_*} \to \mathsf{Top_*}$ be the endofunctor of Definition \ref{Def: The endofunctor}.
Define the natural transformations
	\begin{center}
		$\varepsilon : C \to \id_{\mathsf{Top_*}}$ \quad and \quad $\Delta : C \to C \circ C$
	\end{center}  level-wise on a pointed space $X$ as follows.

\noindent $\bullet$ The counit structure map is defined by
		\begin{center}
		\begin{tikzcd}[row sep=tiny]
			\varepsilon_X:C(X) \arrow[r]                           & X                                 \\
			{\alpha = \left(f_1,f_2,...\right)} \arrow[r, maps to] & \varepsilon_X(\alpha) := f_1(\id_{\P}).
		\end{tikzcd}
	\end{center}
Here, $\id_{\P} \in \P(1)$ is the operadic unit.

\noindent $\bullet$ 
We next define the coproduct structure map  $$\Delta_X:C(X) \to C(C(X)).$$
To do so, let $\alpha = \left(f_1,f_2,...\right) \in C(X)$.
Then $\Delta_X\left(\alpha\right) = \left(\bar f_1, \bar f_2,...\right)$ is an element of the space $C(Z)$, with $Z=C(X)$.
Thus, it is formed by a sequence of maps 
$$\bar f_r : \P(r) \to C(X)^{\vee r}$$ satisfying the compatibility conditions 
$$\pi_i \bar f_r = \bar f_{r-1}d_i, \quad \textrm{for $r\geq 2$ \ and\  $1\leq i \leq r.$}$$
Because of Lemma \ref{Lemma:Determined by 1st comp},
we only need to define the arity one component $\bar f_1 : \P(1) \to C(X)$ 
and extend it as a sequence by the formula $$\bar f_r = \left\{\bar f_1 D_1,...,\bar f_1D_r\right\},$$
where $D_i = d_1 \cdots \widehat{d_i} \cdots d_r$.
\end{definition}

For the definition above to be complete and correct, 
we require two steps:
\begin{itemize}
	 \item [Step 1.] Define $\bar f_1:\P(1) \to C(X)$.
	 \item [Step 2.] Check that $\bar f_1 D_i = *$ is the base point for all indexes $i$, except for at most one.
\end{itemize}

Let us check these steps.

\smallskip

\noindent \fbox{{\strut \ Step 1 \ }} Denote by $\gamma$ the operadic composition map of $\P.$
Define $\bar f_1 : \P(1) \to C(X)$ by 
$$\bar f_1(\mu) = \left(g_1^\mu, g_2^\mu,...\right) \quad \textrm{ for all $\mu\in \P(1)$},$$ 
where the maps $g_r^\mu:\P(r)\to X^{\vee r}$ in the sequence are as follows. 
The first one is:
\begin{align*}
	g_1^\mu: \P(1)\to X,  \qquad  g_1^\mu(\theta) &:= f_1\left(\gamma\left(\mu;\theta\right)\right),
\end{align*}
for $\theta \in \P(1)$. That is, $g_1^\mu = f_1\left(\gamma\left(\mu;-\right)\right)$.
The rest of the maps $g_r^\mu$ are recursively defined by the formula

$$g_r^{\mu}:\P(r) \to X^{\vee r} $$
$$g_r^\mu(\theta) = \left\{g_1^\mu D_1(\theta) ,..., g_1^\mu D_r(\theta)\right\}=\{g_1^\mu(\gamma(\theta;\id_{\P},*,..,*),...,g_1^\mu(\gamma(\theta;*,..,*,\id_\P))$$
For $\theta \in \P(r)$. We will check below that the image of $g_r^\mu$   is indeed contained in the wedge $X^{\vee r}$.
The family of maps $g_r^\mu$ can be explicitly described.
Let us first describe $g_2^\mu : \P(2)\to X \vee X.$
Using, in the order given, the recursive definition of $g_2^\mu$, the definitions of $D_i$ and of $g_1^\mu$, and the associativity of $\gamma$, we can write
\begin{align*}
	g_2^\mu (\theta) &= \left\{ g_1^\mu D_1 (\theta), g_1^\mu D_2(\theta) \right\} = \left\{ g_1^\mu \left(\gamma\left(\theta;\id_\P,*\right)\right), g_1^\mu \left(\gamma\left(\theta;*,\id_\P\right)\right) \right\} 
	\\[0.15cm]
	&= \left\{ f_1\left(\gamma\left(\mu;\gamma\left(\theta;\id_\P,*\right)\right)\right), f_1\left(\gamma\left(\mu;\gamma\left(\theta;*,\id_\P\right)\right)\right)\right\}\\[0.15cm]
	&= \left\{ f_1\left(\gamma\left(\gamma\left(\mu;\theta\right);\id_{\P},*\right)\right), f_1\left(\gamma\left(\gamma\left(\mu;\theta\right);*,\id_{\P}\right)\right)\right\}.
\end{align*}
Thus, $$g_2^\mu = \left\{f_1D_1\left(\gamma(\mu;-)\right),f_1D_2\left(\gamma(\mu;-)\right)\right\}.$$
Next we need to show that $\overline{f_2}$ has its image in the wedge $C(X) \vee C(X)$. Since $\alpha = \left(f_1,f_2,...\right)$ is an element of $C(X)$, it follows that all $f_1D_i=*$ are the base point, except for at most a single index $i$.
Therefore, indeed, $g_2^\mu$ has its image in the wedge.
Furthermore, so defined, $g_2^\mu$ is $S_2$-equivariant.
In general, exactly the same steps as for the $r=2$ case show that the explicit formula for $g_r^\mu$ is
\begin{equation*}
	g_r^\mu (\theta) = \big\{f_1\left( \gamma\left(\gamma(\mu;\theta);\id_\P,*,...,*\right)\right), ... , f_1\big( \gamma\big(\gamma(\mu;\theta);*,...,*,\underbrace{\id_\P}_j,*,...,*\big)\big), ..., f_1\left( \gamma\left(\gamma(\mu;\theta);*,...,*,\id_\P\right)\right)\big\}.
\end{equation*}
Above, the $j$-th component in the wedge has the identity $\id_\P\in \P(1)$ at the $j$-th component.

\medskip

\noindent  \fbox{{\strut \ Step 2 \ }} 
Let us check that $\bar f_1D_i = *$ is the base point for all indexes $i$ except for at most a single one.
We will use Proposition \ref{Prop:Characterization of C(X) as a subspace of Map}.
Recall that for fixed $i$, the map $$\bar f_1D_i : \P(r)\to C(X)$$
evaluated at some operation $\mu\in \P(r)$ is the previously defined sequence $$\bar f_1D_i(\mu) = \left(g_1^{D_i(\mu)}, g_2^{D_i(\mu)},...\right).$$ 
First, observe that for any $\theta\in \P(1)$ and index $i$, with $1\leq i \leq r,$ 
we have 
$$\gamma\left(D_i(\mu);\theta\right) 
= D_i \left(\gamma(\mu;\id_\P,..,\theta,...,\id_\P)\right).$$
Therefore, the first component of the sequence  $\bar f_1D_1(\mu)$ can be written as $$g_1^{D_i(\mu)} = f_1\left( D_i\left(\gamma\left(\mu;-\right)\right)\right).$$
Since the sequence $\left(f_1,f_2,...\right)$ is an element of the space $C(X)$, 
it follows that $f_1D_i$ is the base point for all $i$ except for at most one, and therefore, 
the same holds for the family $\left\{g_1^{D_1(\mu)},...,g_1^{D_i(\mu)},...\right\}$, 
which implies that $\bar f_1 D_i$ is the base point for all $i$ except at most one.

\begin{remark}
In Proposition \ref{Prop:Characterization of C(X) as a subspace of Map},
we identified $C(X)$ as a certain subspace of $\operatorname{Map}\left(\P(1),X\right)$.
From this point of view,
the comultiplication $\Delta = \Delta_X :C(X) \to CC(X)$ is given as follows.
Let $f\in C(X)\subseteq \operatorname{Map}\left(\P(1),X\right)$. 
Then,
    \begin{center}
        \begin{tikzcd}[row sep = tiny]
\Delta\left(f\right):\P(1) \arrow[r] & C(X)                                      &                                        \\
c \arrow[r, maps to]                   & \Delta\left(f\right)(c):\P(1) \arrow[r] & X                                      \\
                                       & d \arrow[r, maps to]                      & {f\left(\gamma\left(c;d\right)\right)}.
\end{tikzcd}
    \end{center}
That is, given $f\in C(X)$, and $c,d\in \P(1)$, the map $\Delta\left(f\right)$ is explicitly given by 
$$\Delta\left(f\right)(c)(d) = f\left(\gamma\left(c;d\right)\right).$$
\end{remark}

\begin{proposition}
	\label{Prop: Comonad structure maps}
	With the notation before, the triple $(C,\varepsilon,\Delta)$ is a comonad in $\mathsf{Top_*}$.
\end{proposition}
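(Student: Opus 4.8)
The plan is to verify the three comonad axioms—counitality (left and right) and coassociativity—by exploiting the fact, established in Lemma \ref{Lemma:Determined by 1st comp} and Proposition \ref{Prop:Characterization of C(X) as a subspace of Map}, that an element of $C(X)$ is completely determined by its first component, viewed as a map $f\colon \P(1)\to X$ with the stated vanishing property. Under this identification, the structure maps simplify dramatically: the counit is evaluation at the operadic unit, $\varepsilon_X(f)=f(\id_\P)$, and the comultiplication is $\Delta(f)(c)(d)=f(\gamma(c;d))$. So rather than manipulating full sequences $(f_1,f_2,\dots)$, I would work entirely with these first components and reduce every axiom to an identity about the operadic composition $\gamma$ evaluated on unary operations. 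This is the decisive simplification, and it is legitimate precisely because both $\varepsilon$ and $\Delta$ were defined (or shown to be re-expressible) purely in terms of $f_1$.

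First I would check the two counit axioms. For right counitality, I must show $C(\varepsilon_X)\circ\Delta_X=\id$. Since $C(\varepsilon_X)$ acts by postcomposition with $\varepsilon_X$ (Proposition \ref{Prop:Characterization of C(X) as a subspace of Map}), applying it to $\Delta(f)$ gives the map $c\mapsto \varepsilon_X(\Delta(f)(c))=\Delta(f)(c)(\id_\P)=f(\gamma(c;\id_\P))$; the right unit axiom for the operad $\P$ gives $\gamma(c;\id_\P)=c$, so this is just $f$. For left counitality, $\varepsilon_{C(X)}\circ\Delta_X$ sends $f$ to $\Delta(f)(\id_\P)$, which is the map $d\mapsto f(\gamma(\id_\P;d))=f(d)$ by the left unit axiom, again recovering $f$. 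Each reduces to a single application of an operad unit law.

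Next comes coassociativity, $(\C\Delta_X)\circ\Delta_X=(\Delta_{C(X)}\C)\circ\Delta_X$, which is the substantive case. Here I would evaluate both composites on an arbitrary $f\in C(X)$ at nested arguments $c,d,e\in\P(1)$ and compare. Tracing through the definitions, one side yields $f(\gamma(c;\gamma(d;e)))$ and the other $f(\gamma(\gamma(c;d);e))$; these agree by the \emph{associativity of the operadic composition} $\gamma$. Some care is needed because the outer $\Delta$ lands in $C(C(X))$ while the inner application of $C(\Delta_X)$ acts by postcomposition with $\Delta_X$, so I would make the bookkeeping of which $\P(1)$-variable is which completely explicit before invoking associativity.

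The main obstacle is not any single axiom—each collapses to a unit or associativity law for $\gamma$—but rather the \textbf{well-definedness verification woven through the comparison}: I must confirm at each step that the maps produced genuinely lie in the respective subspaces $C(X)$, $C(C(X))$, i.e.\ that the vanishing-of-all-but-one-component condition of Proposition \ref{Prop:Characterization of C(X) as a subspace of Map} is preserved, and that all maps remain $S_r$-equivariant and continuous. This was already checked for $\Delta$ itself in Step 2 of Definition \ref{def: Comonad structure maps in Top}, and the same mechanism—that $\gamma(D_i(\mu);-)$ factors through $D_i$ applied to a composite—handles the iterated case, so I would cite that argument rather than redo it. Modulo this routine but essential compatibility check, the proof is a direct translation of the comonad axioms into the operad axioms via the first-component identification.
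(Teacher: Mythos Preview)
Your proposal is correct and follows essentially the same approach as the paper: both reduce the comonad axioms to the arity-one component via Lemma \ref{Lemma:Determined by 1st comp}, whereupon counitality becomes the operad unit law and coassociativity becomes the associativity of $\gamma$. The paper carries slightly more sequence notation before invoking the reduction, while you work from the outset with the compact formula $\Delta(f)(c)(d)=f(\gamma(c;d))$ from the remark preceding the proposition, but this is a presentational difference rather than a different strategy.
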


\begin{proof}
	We prove the coassociativity and counit axioms object-wise. 
	For a pointed space $X$, these axioms are described by
	the following diagrams:
	\begin{center}
		\begin{tikzcd}
			C(X) \arrow[r, "\Delta_X"] \arrow[d, "\Delta_X"']                                       & C\left(C\left(X\right)\right) \arrow[d, "\Delta_{C(X)}"] &  &  & C(X) \arrow[r, "\Delta_X"] \arrow[rd, "\id"] \arrow[d, "\Delta_X"']               & C\left(C\left(X\right)\right) \arrow[d, "\varepsilon_{C(X)}"] \\
			C\left(C\left(X\right)\right) \arrow[r, "C\left(\Delta_X\right)"] & C\left(C\left(C\left(X\right)\right)\right)              &  &  & C\left(C\left(X\right)\right) \arrow[r, "C(\varepsilon_X)"] & C(X)                                   
		\end{tikzcd}
	\end{center}
The left diagram corresponds to the coassociativity condition, and the right diagram to the counit condition.
\medskip

\noindent Let $\alpha = \left(f_1,f_2,...\right) \in C(X)$. We check next that it satisfies the mentioned diagrams.

\medskip

\noindent $\triangleright$ Coassociativity.
We must check that 
\begin{equation}\label{ecu:Coassociativity}
	\left(C\left(\Delta_X\right) \circ \Delta_X\right)(\alpha) = \left(\Delta_{C(X)}\circ \Delta_X\right) (\alpha).
\end{equation}
We analyze $\Delta_X(\alpha)$ first, given that it appears on both sides of the equation above, 
and then look at each of the sides of the equation above.
By Lemma \ref{Lemma:Determined by 1st comp}, it suffices to check that the arity one term of the  sequences arising from both sides of Equation (\ref{ecu:Coassociativity}) agree.
This will ultimately follow from the associativity of the operadic composition $\gamma$ of the operad $\P.$ 

\smallskip

$\bullet$
Description of  $\Delta_X(\alpha)$.
	\begin{center}
\begin{tikzcd}[column sep=tiny,row sep=tiny]
	\Delta_X:C(X) \arrow[r] & C\left(C(X)\right)      \\
	\alpha \arrow[r, maps to]          & {\Delta_X(\alpha) = \left(\bar f_1, \bar f_2,...\right)}
\end{tikzcd}
\end{center}
By Lemma \ref{Lemma:Determined by 1st comp}, the sequence $\left(\bar f_1, \bar f_2,...\right)$ is determined by its first component $\bar f_1.$
It is given as follows:
\begin{center}
\begin{tikzcd}[column sep=tiny,row sep=tiny]
	\bar f_1:\P(1) \arrow[r] & C(X)                         &  & g_1^\mu:\P(1) \arrow[r]   & X                                                  \\
	\mu \arrow[r, maps to]   & {\bar f_1(\mu) = \left(g_1^\mu,g_2^\mu,...\right)} &  & \theta \arrow[r, maps to] & g_1^\mu(\theta)=f_1\left(\gamma(\mu;\theta)\right)
\end{tikzcd}
\end{center}
\item $\bullet$ The left hand side of Equation (\ref{ecu:Coassociativity}) reads:
$$\left(C\left(\Delta_X\right) \circ \Delta_X\right)(\alpha) = C(\Delta_X)\left(\Delta_X(\alpha)\right) = C\left(\bar f_1,\bar f_2,...\right) = \left(\Delta_X \circ \bar f_1, \{\Delta_X,\Delta_X\} \circ \bar f_2,...\right).$$
Here, given maps $\varphi_i:X_i\to Y$, we are denoting the induced map by $\{\varphi_1,...,\varphi_n\} :X_1\vee ... \vee X_n \to Y.$
We have:
\begin{center}
\begin{tikzcd}[column sep=tiny,row sep=tiny]
	\Delta_X\circ \bar f_1:\P(1) \arrow[r] & C(X) \arrow[r]                                              & C\left(C\left(X\right)\right) \\
	\mu \arrow[r, maps to]                 & {\bar f_1(\mu) = \left(g_1^\mu, g_2^\mu,...\right)} \arrow[r, maps to] & {\left(\bar g_1^\mu,\bar g_2^\mu,...\right)}       
\end{tikzcd}
\end{center}
The map $\bar g_1^\mu$ above is determined by:
\begin{center}
\begin{tikzcd}[column sep=tiny,row sep=tiny]
	\bar g_1^\mu:\P(1) \arrow[r] & C(X)                         &  & h_1:\P(1) \arrow[r]        & X                                                         \\
	\theta \arrow[r, maps to]    & {\bar g_1^\mu(\theta) := (h_1,h_2,...)} &  & \lambda \arrow[r, maps to] & h_1(\lambda) = g_1^\mu\left(\gamma(\theta;\lambda)\right)
\end{tikzcd}
\end{center}

\item $\bullet$ The right hand side of Equation (\ref{ecu:Coassociativity}) reads:
\begin{equation*}
 \left(\Delta_{C(X)}\circ \Delta_X\right) (\alpha) = \Delta_{C(X)} \left(\Delta_X(\alpha)\right) = \Delta_{C(X)} \left(\bar f_1,\bar f_2,...\right) = 
 \left(\bar{\bar{f_1}} \bar{\bar{f_2}},...\right).
\end{equation*}
Here, 
\begin{center}
	\begin{tikzcd}[column sep=tiny,row sep=tiny]
		\bar{\bar{f_1}}:\P(1) \arrow[r] & C\left(C\left(X\right)\right)       &  & l_1^\mu:\P(1) \arrow[r]   & C(X)                                           \\
		\mu \arrow[r, maps to]          & {\bar{\bar{f_1}}(\mu) = \left(l_1^\mu,l_2^\mu,...\right)} &  & \theta \arrow[r, maps to] & l_1^\mu(\theta) = \bar f_1\left(\gamma(\mu;\theta)\right)
	\end{tikzcd}
\end{center}

As mentioned, to check the coassociativity condition it suffices to check that $\bar{\bar{f_1}} = \Delta_X\circ \bar f_1.$
By Lemma \ref{Lemma:Determined by 1st comp} again, 
our problem reduces to checking that $\ell_1^\mu=\bar g_1^\mu.$
And once more, using the same lemma, this reduces to checking that the sequence $\bar f_1\left(\gamma\left(\mu;\theta\right)\right)$ has first term equal to $h_1(\lambda)$ described before.
The  first term is explicitly given by 
\begin{equation}
\label{ecu : Una}
    f_1\left(\gamma\left(\gamma\left(\mu;\theta\right);\lambda\right)\right).
\end{equation}
On the right hand side,  the first nested  term of 
$g_1^\mu\left(\gamma\left(\theta;\lambda\right)\right)$ is explicitly given by 
\begin{equation}
\label{ecu : Dos}
    f_1\left(\gamma\left(\mu;\gamma\left(\theta;\lambda\right)\right)\right).
\end{equation}
By the associativity of the operadic composition $\gamma$, 
the term inside $f_1$ in Equation (\ref{ecu : Una}) is the same as the term inside $f_1$ in Equation (\ref{ecu : Dos}).
Thus, these two maps are equal.
This proves the coassociativity of the comultiplication.

\smallskip

\noindent $\triangleright$ Counit. We must check two identities:

\begin{enumerate}
	\item $\left(C\left(\varepsilon_X\right)\circ \Delta_X\right) (\alpha) = \alpha.$
	
	Indeed,
	\begin{align*}
		\left(C\left(\varepsilon_X\right)\circ \Delta_X\right) (\alpha) &= C\left(\varepsilon_X\right) \left(\Delta_X(\alpha)\right) = C\left(\varepsilon_X\right) \left(\bar f_1,\bar f_2,...\right) = \left(\varepsilon_X \circ \bar f_1, \{\varepsilon_X,\varepsilon_X\}\circ \bar f_2,...\right).
	\end{align*} Let us check that $\varepsilon_X\circ \bar f_1 = f_1$ as maps $\P(1)\to X$. 
 If $\mu\in \P(1),$ then: 
$$\left(\varepsilon_X \circ \bar f_1\right) (\mu) = \varepsilon_X \left(\bar f_1(\mu)\right) = \varepsilon_X\left(g_1^\mu,g_2^\mu,...\right) = g_1^\mu(\id_\P) = f_1\left(\gamma(\mu;\id_\P)\right) = f_1(\mu).$$

	\item $\left(\varepsilon_{C(X)} \circ \Delta_X\right) (\alpha) = \alpha$. 
	
	In this case, $$\left(\varepsilon_{C(X)} \circ \Delta_X\right) (\alpha) = \varepsilon_{C(X)} \left(\Delta_X(\alpha)\right) = \varepsilon_{C(X)} \left(\bar f_1,\bar f_2,...\right) = \bar f_1(\id_\P).$$
	We must check that $\bar f_1(\id_\P)=f_1$ as maps $\P(1)\to X$. 
	Indeed, if $\theta \in \P(1)$, then $$\bar f_1(\id)(\theta) = g_1^1(\theta) = f_1\left(\gamma(\id;\theta)\right) = f_1(\theta).$$
\end{enumerate}
The proposition is therefore proven.
 \end{proof}

For the sake of completeness, 
we recall here the well-known fact that comonads explicitly create the cofree coalgebras of the underlying category 
(see for instance \cite[Corollary 5.4.23]{Per19}).

\begin{theorem} 
	\label{teo: Cofree coalgebras}
	Let $X$ be a pointed space. 
	Then, 
	$C(X)$ is the \emph{cofree $C$-coalgebra} on $X$.
	That is, for any $C$-coalgebra $A$ in pointed spaces, 
	there is a natural bijection 
	\begin{equation*}
		\Hom_{\mathsf{Top_*}}\left(A,X\right) \cong \Hom_{\mathsf{C-Coalg}}\left(A,C(X)\right).
	\end{equation*}
\end{theorem}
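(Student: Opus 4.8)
The plan is to recognize this as the standard cofree-coalgebra adjunction attached to the comonad $(C,\varepsilon,\Delta)$ constructed in Proposition \ref{Prop: Comonad structure maps}, specialized to $\mathsf{Top_*}$. Recall that a $C$-coalgebra is a pair $(A,\rho_A)$ with structure map $\rho_A : A \to C(A)$ satisfying the counit and coassociativity axioms, and that the counit identity $C(\varepsilon_X)\circ \Delta_X = \id_{C(X)}$ together with coassociativity exhibits $(C(X),\Delta_X)$ as a $C$-coalgebra. The claimed bijection will be realized by two mutually inverse maps, and the whole argument is formal: it uses only functoriality of $C$, naturality of $\varepsilon$ and $\Delta$, and the coalgebra axioms for $A$.

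First I would define the two assignments. In one direction, send a coalgebra morphism $g : (A,\rho_A) \to (C(X),\Delta_X)$ to the underlying pointed map $\Phi(g) := \varepsilon_X \circ g : A \to X$. In the other direction, send a pointed map $f : A \to X$ to $\Psi(f) := C(f)\circ \rho_A : A \to C(A) \to C(X)$. The first step is to verify that $\Psi(f)$ is genuinely a morphism of $C$-coalgebras, i.e.\ that $C(\Psi(f))\circ \rho_A = \Delta_X \circ \Psi(f)$. Expanding the left-hand side gives $C(C(f))\circ C(\rho_A)\circ \rho_A$, which by the coassociativity axiom $C(\rho_A)\circ\rho_A = \Delta_A\circ\rho_A$ equals $C(C(f))\circ\Delta_A\circ\rho_A$; expanding the right-hand side and using naturality of $\Delta$, namely $\Delta_X\circ C(f) = C(C(f))\circ\Delta_A$, produces the same expression, so the two agree.

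Next I would check that $\Phi$ and $\Psi$ are mutually inverse. For $\Phi\circ\Psi = \id$ one computes $\varepsilon_X \circ C(f) \circ \rho_A = f \circ \varepsilon_A \circ \rho_A = f$, using naturality of $\varepsilon$ followed by the counit axiom $\varepsilon_A \circ \rho_A = \id_A$. For $\Psi\circ\Phi = \id$ one computes, for a coalgebra morphism $g$,
$$
C(\varepsilon_X\circ g)\circ \rho_A = C(\varepsilon_X)\circ C(g) \circ \rho_A = C(\varepsilon_X)\circ \Delta_X \circ g = g,
$$
where the first equality is functoriality of $C$, the middle equality is precisely the statement that $g$ is a coalgebra morphism, and the last is the counit identity $C(\varepsilon_X)\circ \Delta_X = \id_{C(X)}$. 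Naturality of the bijection in $X$ (and in $A$) is then immediate from naturality of $\varepsilon$, since both $\Phi$ and $\Psi$ are built out of the natural transformations of the comonad.

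There is essentially no obstacle here: the real content was already discharged in Proposition \ref{Prop: Comonad structure maps}, where the comonad axioms were verified, and the present statement is a purely diagrammatic consequence of those axioms. The only points requiring a moment's care are that $C$ is a genuine endofunctor of $\mathsf{Top_*}$, so that $C(f)$ is continuous and basepoint-preserving, which holds by Definition \ref{Def: The endofunctor}, and that all the maps above respect basepoints; both are automatic. Consequently one may simply cite the general statement \cite[Corollary 5.4.23]{Per19} and indicate how it specializes, which is what the phrasing of the theorem already suggests.
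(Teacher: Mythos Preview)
Your proof is correct and is precisely the standard argument for this general fact about comonads; the paper itself does not give a proof but simply cites \cite[Corollary 5.4.23]{Per19}, which is exactly what you unwind. There is nothing to compare: you have written out in detail the formal adjunction argument that the reference contains, and your final paragraph already acknowledges that one could just cite the result directly.
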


We will give a few explicit examples of how this comonad looks like in the case of the associative operad and the little $n$-cubes operad in Section \ref{sec: Comonad of little cubes}.

\subsection{Alternative definitions of a coalgebra over an operad}
\label{sec: Categories of coalgebras}

Let $\P$ be a unitary operad in $\mathsf{Top}$.
The comonad $C=C_\P$ constructed in Section \ref{sec: Topological comonads}  naturally  gives rise to a category of coalgebras in $\mathsf{Top_*}$. 
The objects in this category are pointed spaces $X$ together with a coalgebra structure map $c:X\to C(X).$
We call the objects of this category $\P$-coalgebras.
There is an equivalent way of defining a $\P$-coalgebra that does not require the explicit construction of the comonad $C$.
In this alternative definition, 
the objects are pointed spaces $X$ together with an operad map $\P\to \CoEnd_X$, 
where $\CoEnd_X$ is the coendomorphism operad asssociated to the pointed space $X$. 
In this section, we present the alternative definition of $\P$-coalgebra in terms of coendomorphisms,
and show that this is equivalent to the comonadic definition for unitary operads.
The definition of $\P$-coalgebras in terms of the coendomorphism operad is much more intuitive,
and defines the coalgebra structure in terms of explicit cooperations, i.e. maps $X \rightarrow X^{\vee r}$. 
On the other hand, the comonad definition has the benefit that it will be much easier to compare 
it to the $\Sigma^n \Omega^n$-comonad, 
making it more suitable for proving the approximation and  recognition theorems later in this paper.

\medskip

We start by defining the category of $\P$-coalgebras using the comonad $C_\P$.

\begin{definition} 
Let $\P$ be a unitary  operad in $\mathsf{Top}$.
The category $C_\P\mathsf{-Coalg}$ of coalgebras in $\mathsf{Top_*}$ associated to the
comonad $C_\P$ is called the \emph{category of (comonadic) $\P$-coalgebras.}
The objects in this category are triples $(X,c,\epsilon)$, 
where $c:X\to C(X)$, called the coalgebra structure map of $X$ and $\epsilon:C_\P(X) \rightarrow X$ its counit, 
are maps of pointed spaces satisfying counit and coassociativity axioms:
\begin{center}
\begin{tikzcd}
X \arrow[r, "c"] \arrow[rd, "\id"'] & C(X) \arrow[d, "\varepsilon_X"] &  & X \arrow[r, "c"] \arrow[d, "c"] & C(X) \arrow[d, "C(c)"] \\
                                    & X                               &  & C(X) \arrow[r, "\Delta_X"]      & C(C(X))               
\end{tikzcd}
\end{center} 
The morphisms between these objects are pointed maps $X\to Y$ that make the obvious squares commute.
\end{definition}

Before giving the alternative definition of  $\P$-coalgebras, 
we define the coendomorphism operad associated to a pointed space.

\begin{definition} 
\label{def:Coend operad}
Let $X$ be a pointed space. 
The \emph{coendomorphism operad} $\CoEnd_X$ in pointed topological spaces with the wedge sum
has arity $r$ component 
\begin{equation*}
	\CoEnd_X(r):=\Map_*\left(X,X^{\vee r}\right),
\end{equation*} 
the based mapping space from $X$ to the $r$-fold wedge sum of $X$ with itself. For $r=0$, set $\CoEnd_X(0)=*$. 
The operadic composition maps are defined as
\[
\gamma:\Map_*\left(X,X^{\vee n}\right)\times \Map_*\left(X,X^{\vee m_1}\right) \times \cdots \times \Map_*\left(X,X^{\vee m_n}\right) \rightarrow \Map_*\left(X,X^{\vee \sum m_i}\right),
\]
\[
\gamma\left( f,g_1,...,g_n\right):= \left(g_1\vee...\vee g_n\right) \circ f.
\]
The symmetric group action on $\CoEnd_X(r)$ permutes the wedge factors in the output of a map $f:X\to X^{\vee r}$. 
The unit $\eta : I \to \CoEnd_X$ is 
		determined by mapping the base point in $I(1) = \{*\}$ 
		to the identity map in $\CoEnd_X(1)=\Map_*\left(X,X\right)$.
\end{definition}

It is straightforward to check that $\CoEnd_X$ is an operad in pointed spaces,
and we leave this to the reader. 
The coendomorphism operad gives an alternative definition of  $\P$-coalgebras.
\begin{definition} \label{def:coendomorphism coalgebra}
Let $\P$ be a not necessarily unitary operad in $\mathsf{Top}$.
A $\P$-coalgebra is a pointed topological space $X$ together with an operad map $\P \to \CoEnd_X$. 
A morphism of $\P$-coalgebras is a pointed map $f:X\to Y$ such that the following diagram commutes 
 for all $n$:
\begin{center}
\begin{tikzcd}
	\P(n)\times X \arrow[d, "\id\times f"] \arrow[r, "\Delta_n"] & X\vee ... \vee X \arrow[d, "f \vee ... \vee f"] \\
	\P(n)\times Y \arrow[r, "\Delta'_n"]                         & Y \vee ... \vee Y                              
\end{tikzcd}	
\end{center} Here, $\Delta_n$ and $\Delta_n'$ are the coalgebra structure maps of $X$ and $Y$, respectively, which 
are written arity-wise by using the mapping space-product adjunctions 
$$\Map\left(\P(n)\times Z,Z^{\vee r}\right) \cong \Map\left(\P(n),\Map\left(Z,Z^{\vee r}\right)\right),$$ 
where $Z$ is any pointed topological space. 
Since we are mixing pointed and unpointed spaces,
we are viewing $\Map_*(X,X^{\vee r})$ as a subspace of the unpointed mapping space,
so that we are able to use the product-mapping space adjunction.
\end{definition}

\begin{remark}
Note that this definition of a $\P$-coalgebra is more general than the one using the comonad from the previous section. 
In particular, we do not require the operad to be unitary so these coalgebras are defined for a larger class of operads. 
\end{remark}

By using the product-mapping space adjunction for $S_r$-spaces, 
we see that  there are several equivalent ways of unpacking the definition of a coendomorphism $\P$-coalgebra. 
The definition of a coalgebra as a sequence of coproduct maps
\begin{equation*}
	\Delta_r:\P(r) \times X \rightarrow X^{\vee r}
\end{equation*}
is also equivalent to a sequence of maps
\begin{equation*}
	\Delta_r':X \rightarrow \Map\left(\P(r),X^{\vee r}\right)^{\S_r},
\end{equation*} 
satisfying certain conditions. Here $\Map(\P(r),X^{\vee r})^{\S_r}$ is the subspace of $\S_r$-invariant maps.

\medskip

Versions of the coendomorphism operad have been explicitly used before in for example \cite{Ber04} 
in the category of chain complexes. 
The notion of a coalgebra in the category of pointed spaces 
with the wedge product has also appeared before in \cite{Kle97}.
However, the authors do not use the coendomorphism operad or construct an explicit comonad.

\smallskip

The following result asserts that both definitions of $\P$-coalgebras are equivalent for unitary operads.

\begin{proposition}
\label{prop: Equivalent categories of coalgebras} 
Let $\P$ be a unitary operad in $\mathsf{Top}$.
Then the definition of a $\P$-coalgebra via the comonad from Section \ref{sec: Topological comonads} 
is equivalent to the definition via the coendomorphism operad 
from Definition \ref{def:coendomorphism coalgebra}.
\end{proposition}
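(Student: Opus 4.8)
The plan is to construct mutually inverse correspondences between the two notions of coalgebra structure on a fixed pointed space $X$, and then to verify that these correspondences are compatible with morphisms, so that they assemble into an equivalence (indeed, an isomorphism) of categories. The bridge between the two definitions will be Theorem \ref{teo: Cofree coalgebras}, which identifies $C(X)$ as the cofree $C$-coalgebra on $X$, together with the explicit description of $C(X)$ as a subspace of $\Map(\P(1),X)$ from Proposition \ref{Prop:Characterization of C(X) as a subspace of Map}.

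First I would start from a comonadic coalgebra structure, that is, a map $c : X \to C(X)$ satisfying the counit and coassociativity axioms. Using the characterization of $C(X)$ from Proposition \ref{Prop:Characterization of C(X) as a subspace of Map}, a point of $C(X)$ is an $S_r$-equivariant family of maps $f_r : \P(r) \to X^{\vee r}$, so the structure map $c$ adjoints to a family of maps $\Delta_r : \P(r) \times X \to X^{\vee r}$, one in each arity. I would then check that the counit axiom for $c$ forces $\Delta_1$ to send $(\id_\P, x)$ to $x$ (matching the operadic unit condition for a map $\P \to \CoEnd_X$), and that the coassociativity axiom for $c$, when unwound through the formula $\Delta(f)(c)(d) = f(\gamma(c;d))$ from the Remark preceding this proposition, translates precisely into the operadic composition identity $\gamma(f, g_1, \dots, g_n) = (g_1 \vee \cdots \vee g_n) \circ f$ of Definition \ref{def:Coend operad}. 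This yields an operad map $\P \to \CoEnd_X$, i.e.\ a coendomorphism coalgebra structure.

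Conversely, given an operad map $\P \to \CoEnd_X$, I would produce a comonadic structure map $c : X \to C(X)$ by sending $x \in X$ to the sequence whose $r$-th component is $\theta \mapsto \Delta_r(\theta, x)$; the compatibility of this sequence with the restriction operators and wedge collapse maps (so that it genuinely lands in $C(X)$ rather than merely in $\operatorname{Tot}(\P,X)$) is exactly where unitarity of $\P$ is used, since the restriction operators $d_i$ insert the unitary operation $* \in \P(0)$, and this must correspond under the operad map to collapsing a wedge factor. One then checks that the counit and coassociativity axioms for $c$ hold by reversing the computations of the previous paragraph. Verifying that these two assignments are mutually inverse is a routine unwinding of the adjunctions, and the compatibility with morphisms follows because a pointed map $f : X \to Y$ intertwines the two notions of morphism (the commuting square in Definition \ref{def:coendomorphism coalgebra} is the adjoint of the commuting square defining a morphism of comonadic coalgebras).

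The main obstacle, and the only place the unitarity hypothesis is genuinely needed, will be showing that the coendomorphism structure map indeed produces a point of the subspace $C(X) \subseteq \operatorname{Tot}(\P,X)$, i.e.\ that the family $\{\Delta_r\}$ is compatible with the restriction operators and wedge collapse maps in the sense of Equation (\ref{ecu:Compatibility with restriction operator and wedge collapse}). This amounts to checking that inserting the nullary operation $*$ into the $i$-th slot of $\theta \in \P(r)$ corresponds, under $\P \to \CoEnd_X$ and the operadic composition in $\CoEnd_X$, to collapsing the $i$-th wedge factor of $X^{\vee r}$; this in turn relies on the fact that the nullary operation of $\CoEnd_X$ is the constant map to the basepoint, so that wedging it in via $\gamma$ reproduces the wedge collapse $\pi_i$. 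Once this single compatibility is established, everything else is a formal manipulation of the defining adjunctions.
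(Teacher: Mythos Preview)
Your proposal is correct and follows essentially the same approach as the paper: both establish the bijection via the explicit formula $\rho_r(\theta)(x) = f_r^x(\theta)$ (the paper states this directly; you phrase it in adjoint form as $\Delta_r : \P(r)\times X \to X^{\vee r}$), and both leave the verification of the axioms as straightforward. Your outline is considerably more detailed than the paper's one-line proof, and in particular you correctly isolate the one substantive point---that unitarity of $\P$ is needed precisely to ensure the restriction/collapse compatibilities of Equation~(\ref{ecu:Compatibility with restriction operator and wedge collapse}) hold, since inserting $*\in\P(0)$ must correspond under $\P\to\CoEnd_X$ to the constant map $X\to X^{\vee 0}=*$, hence to the wedge collapse $\pi_i$. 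One minor remark: your invocation of Theorem~\ref{teo: Cofree coalgebras} as a ``bridge'' is not actually used in your argument (and indeed the cofree property gives a bijection for maps \emph{into} $C(X)$ from a $C$-coalgebra, not quite what is needed here); the real bridge is just the explicit description of $C(X)$ from Proposition~\ref{Prop:Characterization of C(X) as a subspace of Map}, which you do use correctly.
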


\begin{proof}
Indeed, we can identify operad maps $\rho: \P \to \CoEnd_X$ with coalgebra structure maps $c:X\to C(X)$
by the following rule: for any $\theta \in \P(r)$ and $x\in X$,
$$\rho_r(\theta)(x) = f_r^x(\theta).$$
Here, $\rho_r$ is the arity $r$ component of $\rho$, and $f_r^x$ is the $r$th-term of the sequence $c(x)=\left(f_1^x,f_2^x,...\right)$.
The formula above turns a coendomorphism coalgebra into a comonad coalgebra and vice versa.
It is further straightforward to check that this identification commutes with morphisms.
\end{proof}

From now on, we always use the shorter notation $\P\mathsf{-Coalg}$ for the category of $\P$-coalgebras.

\begin{remark}\label{remark: Counital coalgebras}
The $\P$-coalgebras defined in this section are \emph{canonically counital}. 
That is, they come equipped with the unique map $\varepsilon : X\to *,$ 
and this map behaves as a counit with respect to the rest of the structure.
This explains the compatibility conditions of Equation (\ref{ecu:Compatibility with restriction operator and wedge collapse}).
Indeed, if $X$ is a $\P$-coalgebra, 
then the following diagram commutes:		
\begin{center}
	\begin{tikzcd}[column sep=2.5cm]
		P(n)\times X \arrow[r, "\Delta_r"] \arrow[d, "d_i \times \id"'] & X^{\vee r} \arrow[r, "\id \vee\cdots \vee \varepsilon \vee \cdots \vee \id"] & X^{\vee (r-1)} \arrow[d, "\id"] \\
		P(n-1)\times X \arrow[rr, "\Delta_{r-1}"]                    &                                                                           & X^{\vee(r-1)}                
	\end{tikzcd}
\end{center}
In the diagram above, $\Delta_r$ is the arity $r$ coalgebra structure map of $X$, 
and
$\id \vee\cdots \vee \varepsilon \vee \cdots \vee \id$ is precisely $\pi_i.$
Note that the counit of a coalgebra is \emph{unique}, i.e. since $*$ is the terminal object there is only one possible map from $X$ to $X^{\vee 0}=*$.
This is in high contrast with the (unpointed) algebra case,
in which there are many possibilities for a unit,
i.e. there are many maps from $X^{\times 0}=*$ to $X$,
since $*$ is not the initial object in unpointed spaces. 
\end{remark}

\subsection{The comonad associated to the little \texorpdfstring{$n$}{n}-cubes operad}
\label{sec: Comonad of little cubes}

In this section, we take a closer look at the comonad constructed 
in Section \ref{sec: Topological comonads},
in the particular case of $\P = \mathcal C_n$ being the little $n$-cubes operad.
Although we assume familiarity with this operad, there are a number of small variations in the literature. We give a brief summary below in order to carefully fix our conventions and establish the notation.
We will consistently denote by $C_n$ the comonad in pointed spaces associated to the little $n$-cubes operad $\mathcal C_n$. 
In Proposition \ref{Prop: Geometric characterization of C_n},
we give a geometric characterization of $C_n\left(X\right)$ as an explicit subspace of $\Map\left(\mathcal C_n(1),X\right)$.

\medskip

Denote by $I^n$ the unit $n$-cube of $\R^n$ and by $J^n$ its interior.
A \emph{little $n$-cube} is a rectilinear embedding $h: I^n \to I^n$ of the form $h = h_1 \times \cdots \times h_n$, where each component $h_i$ is given by
\begin{equation}
	\label{ecu: little cubes components}
h_i(t) = (y_i-x_i)t + x_i, \quad \textrm{ for } \quad  0\leq x_i < y_i \leq 1.
\end{equation} 
The image $h\left(J^n\right)$ of the interior of $I^n$ under a rectilinear embedding $h$ will be denoted $\mathring h$. So although the operad is called the little $n$-cubes operad it is technically the little $n$-rectangles operad.

\smallskip

For each $n\geq 1$, the \emph{little $n$-cubes operad} $\mathcal C_n$ is an operad in $\mathsf{Top}$.
It was introduced independently by Boardman--Vogt and May \cite{Boa73,May72} for studying iterated loop spaces. 
A comprehensive modern reference is \cite{Fre17}. 
We consider the unitary version of this operad, i.e., $\mathcal C_n(0)=*$ is the one-point space.
For each $r\geq 1$, the arity $r$ component $\mathcal C_n(r)$ of $\mathcal C_n$ is the subspace of the mapping space 
$$\mathcal C_n(r) \subseteq \Map\left(\coprod_{r} I^n, I^n\right)$$
given by those rectilinear embeddings for which the images of the interiors of different cubes are pairwise disjoint. 
That is, 
\begin{equation*}
	\mathcal C_n(r) = \left\{\left(c_1,...,c_r\right) \mid \textrm{each $c_i$ is a little $n$-cube, and $\mathring c_i\cap \mathring c_j\ = \emptyset$ for all $i\neq j$} \right\}.
\end{equation*}
The symmetric group $S_r$ acts on a configuration 
$c = \left(c_1,...,c_r\right)$ of little cubes  by permuting its components, 
$\left(c_1,...,c_r\right) \cdot \sigma = \left(c_{\sigma^{-1}(1)},...,c_{\sigma^{-1}(r)}\right)$.
The operadic unit $1\in \mathcal C_n(1)$ is the identity map $I^n \to I^n$, and the partial composition products are explicitly given by 
$$\left(c_1,...,c_r\right) \circ_ i \left(d_1,...,d_s\right) = \left(c_1,...,c_{i-1}, c_i\circ d_1,...,c_i \circ d_s,c_{i+1},...,c_r\right).$$
That is: we re-scale and insert the little $n$-cubes $d_1,...,d_s$ in place of the little $n$-cube $c_i$, 
which is removed, 
and then relabel accordingly.

\medskip

Recall from Proposition \ref{Prop:Characterization of C(X) as a subspace of Map} that the underlying space of the coalgebra 
$C_{\P}(X)$ 
associated to a unitary topological operad $\P$ and a pointed space $X$ 
is characterized as a certain subspace of $\Map\left(\P(1),X\right)$. 
In the particular case of the comonad $C_n$ associated to the little $n$-cubes operad,
there is a very geometrical characterization.
We need the following preliminary notation. 
First, recall that $$D_i = d_1\cdots \widehat {d_i} \cdots d_r : \mathcal C_n(r) \to \mathcal C_n(1)$$ 
denotes the composition of the restriction operators omitting the $i$-th term.
Evaluated at a configuration 
$c = \left(c_1,...,c_r\right)\in  \mathcal C_n(r)$, the map $D_i$ recovers the $i$-th little $n$-cube $c_i$.
Now, let  $X$ be a pointed space. 
Given  $f:\mathcal C_n(1) \to X$ any map, 
define for all $r\geq 2$ and $1\leq i \leq r$ the collection of maps 
\begin{equation}
    \label{ecu : maps f_r}
    f_r^i := f\circ D_i  : \P(r)\to X \quad \textrm{and} \quad f_r := \left(f_r^1,...,f_r^r\right) : \P(r)\to X^{\times r}.
\end{equation}
The mentioned characterization is the following.

\begin{proposition} 
	\label{Prop: Geometric characterization of C_n}
Let $X$ be a pointed space and $C_n$ be the comonad associated to the little $n$-cubes operad.
Then a map $f:\mathcal C_n(1)\to X$ belongs to $C_n(X)$ if, and only if, 
$f$ satisfies the following property:
	\begin{equation*}
		(D) \ \ \textit{If $c_1,c_2\in \mathcal C_n(1)$ are little $n$-cubes such that $\mathring c_1 \cap \mathring c_2 = \emptyset$, then $f(c_1)=*$ or $f(c_2)=*$.}
	\end{equation*}
\end{proposition}

It follows that taking 
$f=f_1$, each map $f_r$ in (\ref{ecu : maps f_r}) has its image in the $r$-fold wedge $X^{\vee r}$,
it is $S_r$-equivariant,
and the compatibility conditions $f_{r-1}d_i = \pi_if_r$ are satisfied for all $r\geq 2$ and $1\leq i \leq r$
if, and only if, condition $(D)$ is satisfied.

\begin{proof}
Assume $f = f_1 : \mathcal C_n(1) \to X$ satisfies  property $(D)$.
Fix an  arbitrary $r\geq 2$, and some $1 \leq i \leq r$.
Define $f_r : \mathcal C_n(r) \to X^{\times r}$ by 
$$f_r = \left(f_1D_1,...,f_1D_r\right).$$
Let us check that $f_r$ has its image in the wedge.
Indeed, for any $\theta = \left(c_1,...,c_r\right)\in \mathcal C_n(r)$, 
it follows from the definition of the space $\mathcal C_n(r)$ that $\mathring c_k \cap \mathring c_j= \emptyset$ for all $j\neq k$.
Furthermore, for each index $j$ between $1$ and $r$, we can write  
$$c_j = \left(d_1 \circ \cdots \widehat{d_j} \cdots \circ d_r\right)\left(\theta\right) = D_j(\theta).$$
Therefore, condition $(D)$ applied to each pair $(j,k)$ with $j\neq k$
implies that at most a single component $f_1(c_j)$ is not the base point. 
Said differently: 
$f_r$ has its image in the wedge.
The map $f_r$ is $S_r$-equivariant.
Indeed, for any $\sigma\in S_r$, one has 
$$f_r \cdot \sigma = \left\{f_1D_1,...,f_1D_r\right\} \cdot \sigma = 
\left\{f_1D_1 \cdot \sigma,...,f_1D_r\cdot \sigma\right\}  =
\left\{f_1D_\sigma(1) ,...,f_1D_\sigma(r)\right\} = \sigma \cdot \left\{f_1D_1,...,f_1D_r\right\}.$$
Since $\sigma$ permutes the coordinates of the wedge factors, the claim is proven.

For the converse, assume that $\left(f_1,f_2,...\right) \in C_n(X),$ and that 
$c_1,c_2 \in \mathcal{C}_n(1)$ are little $n$-cubes such that $\mathring{c}_1\cap \mathring{c}_2 = \emptyset$.
This is precisely the condition needed to ensure that $(c_1,c_2)$ is an element of $C_n(2)$.
Consider $f_2\left(c_1,c_2\right)\in X\vee X$. From the comonadic compatibility conditions, one has 
\[
f_1\left(c_1\right) = \pi_1 f_2\left(c_1,c_2\right) \quad \textrm{and} \quad f_1\left(c_2\right) = \pi_2 f_2\left(c_1,c_2\right).
\]
Therefore, one of $f_1\left(c_1\right)$ or $f_1\left(c_2\right)$ must be the base point. It follows that $f_1$ satisfies property $(D)$.
\end{proof}

In the next remark, 
we point out the obvious fact that non-trivial strictly coassociative coalgebras do not exist in pointed spaces.

\begin{remark}
\label{Remark: Trivial coassociative coalgebras}
Recall that a pointed space $X$ is a co-H-space
if it comes equipped with a map $c:X\to X\vee X$ that is a factorization up to homotopy of the identity map $X\to X$:
\begin{center}
	\begin{tikzcd}
		X \arrow[r, "c"] \arrow[rd, "\id"'] & X\vee X \arrow[d, "q_i"] \\
		& X                       
	\end{tikzcd}
\end{center}
That is, $q_1c \simeq \id \simeq q_2c$, where $q_i:X \vee X \to X$ is the projection onto the $i$-th factor of the wedge.
If we try to strictify this diagram,  considering $q_1c = \id = q_2c$, 
then for any $x\in X$ we would have the following situation. 
The coproduct $c(x)$ is either a point in the first wedge factor, $(x_1,*)$, 
or it is a point in the second wedge factor, $(*,x_2)$.
Without loss of generality, we may assume  that it is of the form $c(x) = (x_1,*)$,  
we would then have 
$$q_2c(x) = q_2(x_1,*) = *.$$
If $X$ has more than one point, 
we will not have $q_2c(x)=x$ for $x\neq *$.
Thus, the unique strictly coassociative counital coalgebra is the one point space.
This is a significant contrast with the algebra case, where for example,
the James construction \cite{Jam55} gives a strictly associative monoid in pointed spaces modeling $\Omega\Sigma  X$. 
The classical Moore loop space is another important example of a pointed space endowed with a strictly associative product.
We conclude that there is no possible "rectification" of 
a counital homotopy coassociative coalgebra into a counital strictly coassociative coalgebra.  
Aside from the elementary proof given here, 
the non-existence of strictly coassociative coalgebras in $\mathsf{Top_*}$ will also follow from  Proposition \ref{Prop: Reduced operads produce trivial comonads},
a more general statement asserting that reduced operads produce trivial comonads, 
leaving no place for non-trivial  counital coassociative coalgebras.
Remark that it is the counit that is causing all the problems in the discussion above.
Since there are non-trivial non-counital strictly coassociative coalgebras, the argument above does not apply. 
It is therefore not known whether strictly coassociative 
rectifications exist in the case of non-counital coalgebras,
but this is beyond the scope of this paper.
\end{remark}

The particular instance of Theorem \ref{teo: Cofree coalgebras} in this case gives the following important observation.

\begin{theorem} 
	Let $X$ be a pointed space.
	Then, $C_n\left(X\right)$ is the \emph{cofree $C_n$-coalgebra} on $X$.
	That is, for any $C_n$-coalgebra $A$, there is a natural bijection
	\begin{equation*}
		\Hom_{\mathsf{Top_*}}\left(A,X\right) \cong \Hom_{C_n\mathsf{-Coalg}}\left(A,C_n(X)\right).
	\end{equation*}
\end{theorem}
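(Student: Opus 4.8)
The plan is to deduce this theorem as a direct specialization of the general cofree coalgebra result already established. Specifically, I would invoke Theorem \ref{teo: Cofree coalgebras}, which asserts that for \emph{any} unitary topological operad $\P$, the space $C_\P(X)$ is the cofree $C_\P$-coalgebra on a pointed space $X$. Since the little $n$-cubes operad $\mathcal C_n$ is a unitary operad (with $\mathcal C_n(0)=*$ a single point, as fixed in our conventions), the comonad $C_n = C_{\mathcal C_n}$ falls squarely within the scope of that theorem.

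First I would observe that all the hypotheses of Theorem \ref{teo: Cofree coalgebras} are satisfied by taking $\P = \mathcal C_n$. The comonad $C_n$ was constructed in Section \ref{sec: Comonad of little cubes} precisely as the instance $C_{\mathcal C_n}$ of the general comonad of Definition \ref{Def: The endofunctor}, and Proposition \ref{Prop: Comonad structure maps} guarantees that the triple $(C_n,\varepsilon,\Delta)$ is a genuine comonad in $\mathsf{Top_*}$. Consequently, the abstract statement that a comonad creates cofree coalgebras applies verbatim, yielding the natural bijection
\begin{equation*}
\Hom_{\mathsf{Top_*}}\left(A,X\right) \cong \Hom_{C_n\mathsf{-Coalg}}\left(A,C_n(X)\right)
\end{equation*}
for every $C_n$-coalgebra $A$. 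The naturality in both $A$ and $X$ is likewise inherited from the general case.

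Since the theorem is genuinely a corollary with no new content beyond instantiation, there is effectively no substantial obstacle to overcome; the only point meriting a sentence of care is confirming that $\mathcal C_n$ really is unitary in the precise sense required by Theorem \ref{teo: Cofree coalgebras}, which is immediate from the conventions set in Section \ref{sec : Conventions}. Thus the proof reduces to a one-line appeal, and I would simply write that the statement is the special case $\P = \mathcal C_n$ of Theorem \ref{teo: Cofree coalgebras}, noting that $\mathcal C_n$ is unitary and that $C_n$ is the associated comonad.
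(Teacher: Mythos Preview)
Your proposal is correct and matches the paper's own treatment exactly: the paper introduces this theorem with the sentence ``The particular instance of Theorem \ref{teo: Cofree coalgebras} in this case gives the following important observation,'' and provides no further proof. Your reduction to the general cofree-coalgebra result with $\P=\mathcal C_n$ is precisely what is intended.
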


\subsubsection{Reduced  topological operads and weak equivalences}

In this section, 
we prove that for
\emph{reduced} unitary topological operads (i.e. $\P(1) = \{*\}$) ,
the comonad $C_\P$ is always trivial, in the sense that it is a one-point space when evaluated at any pointed space.
Therefore, 
the associated category of $\P$-coalgebras is trivial
(Proposition \ref{Prop: Reduced operads produce trivial comonads}).
This is a striking difference with the construction of $C_n$ in the case of the little $n$-cubes operad $\mathcal C_n$,
whose category of coalgebras is rich and interesting.
As a consequence,
we readily see that the comonad construction does not respect weak equivalences
in the Berger--Moerdijk model structure \cite{Ber03} on topological operads.
That is, 
if $\P \to \mathcal Q$ is a morphism of unitary operads in $\mathsf{Top_*}$
which is a weak equivalence in each arity, 
it does not necessarily follow that the induced map $C_{\P}(X) \to C_{\mathcal Q}(X)$ is a weak equivalence for each pointed space $X$.
For example, the associative operad $\mathsf{Ass}$ is reduced, 
producing a trivial category of coalgebras,
but there is a a well-known weak equivalence of operads $\mathcal C_1 \twoheadrightarrow \mathsf{Ass}$.
Said differently,
a weak equivalence of unitary operads does not imply an equivalence of categories of coalgebras
(even of up to homotopy coalgebras).

\begin{proposition}
\label{Prop: Reduced operads produce trivial comonads}
	If $\P$ is a reduced  unitary topological  operad, 
	then $C_{\P}$ is the trivial comonad. 
	That is, $C_{\P}(X)$ is the one-point space for all pointed spaces $X$.
	In particular, the comonads $C_{\mathsf{Ass}}$ and $C_{\mathsf{Com}}$ 
	produced respectively from the associative and commutative operads are trivial.
\end{proposition}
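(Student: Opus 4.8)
The plan is to apply the geometric characterization from Proposition~\ref{Prop:Characterization of C(X) as a subspace of Map}, which identifies $C_\P(X)$ with the subspace of $\Map(\P(1),X)$ consisting of those maps $f$ for which, at every arity $r\geq 2$, the composites $f_r^i = f\circ D_i$ are all equal to the base point except for at most one index $i$. Since $\P$ is reduced, its arity one component is a single point, $\P(1)=\{*\}$. Consequently $\Map(\P(1),X)$ is itself homeomorphic to $X$: a map out of the one-point space is just a choice of point in $X$. So $C_\P(X)$ is identified with a subspace of $X$, and everything reduces to determining \emph{which} points of $X$ satisfy the wedge condition.

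The key step is to exploit the compatibility condition at arity $r=2$. First I would observe that in a reduced unitary operad the restriction operators $d_i:\P(2)\to\P(1)=\{*\}$ are forced, and that $\P(2)$ is nonempty (it contains $\gamma(\mathrm{id}_\P;*)$-type elements, or more simply the operadic structure guarantees operations obtained by composing the unit). The crucial point is that for \emph{any} element $f\in C_\P(X)$, corresponding to a single point $x\in X$ via $f(*)=x$, the associated maps satisfy $f_2^1 = f_2^2 = f\circ D_i$, and both equal the constant map with value $x$, because $D_1$ and $D_2$ both land in the one-point space $\P(1)$ and $f$ has the single value $x$. The wedge condition of Proposition~\ref{Prop:Characterization of C(X) as a subspace of Map} demands that at most one of these two be non-basepoint; but they are \emph{equal}, so either both are the basepoint or the condition is violated. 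Forcing both to be the basepoint yields $x=*$.

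Hence the only admissible $f$ is the constant map at the base point, so $C_\P(X)=\{*\}$, the one-point space, for every pointed $X$. This establishes that $C_\P$ is the trivial comonad. The final sentence about $C_{\mathsf{Ass}}$ and $C_{\mathsf{Com}}$ follows immediately, since both the associative and commutative operads are reduced unitary operads: each has $\P(1)=\{*\}$ (a single binary-generated structure with trivial unary part in the unitary normalization).

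The step I expect to require the most care is pinning down the arity-two argument rigorously, specifically verifying that $f_2^1$ and $f_2^2$ genuinely coincide and are not accidentally distinguished by the $S_2$-action or by some subtlety in how $D_1,D_2:\P(2)\to\P(1)$ are defined. The $S_2$-equivariance of $f_2$ and the fact that $\P(1)$ is a single point should collapse any apparent distinction, but I would want to confirm that the characterization's ``at most one non-basepoint'' clause is applied to genuinely equal maps rather than to maps that merely happen to share a target. Once that identity is secured, the conclusion $x=*$ is immediate and the rest is formal.
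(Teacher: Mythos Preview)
Your proposal is correct and follows essentially the same argument as the paper: both reduce to observing that $f_1:\P(1)\to X$ picks out a single point $x_0$, and then the arity-two condition forces $\{x_0,x_0\}$ to lie in the wedge, so $x_0=*$. The paper phrases this via Lemma~\ref{Lemma:Determined by 1st comp} (the recursive formula for $f_r$) rather than Proposition~\ref{Prop:Characterization of C(X) as a subspace of Map}, but these are the same machinery, and your concern about the $S_2$-action is unnecessary since $\P(1)$ being a point leaves no room for any distinction between $D_1$ and $D_2$.
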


\begin{proof}
	Let $\P$ be an operad as in the statement. 
	Fix a pointed space $X$, and consider an arbitrary sequence $\alpha = \left(f_1,f_2,...\right) \in C_{\P}(X)$.
	Then, 
	$$f_1 : \P(1)\to X$$ 
	specifies some point $f_1(*)=x_0\in X$.
	Recall (Lemma \ref{Lemma:Determined by 1st comp}) 
	that the higher terms $f_r$ in the sequence $\alpha$ are determined by the recursive formula 
	\begin{equation}\label{Ecu:recur in proof}
		f_r = \left\{f_1D_1 , ..., f_1D_r\right\}.
	\end{equation}
	In particular, for any $\theta \in \P(2)$,
	$$f_2(\theta) = \left\{f_1d_2(\theta), f_1d_1(\theta)\right\} = \{x_0,x_0\}.$$
	Therefore, for $f_2$ to be well-defined (i.e., having its image in the wedge),
	the point $x_0$ must be the base point of $X$.
	It then follows from the recursive formula (\ref{Ecu:recur in proof}) that for all $r\geq 2$ and $\theta\in \P(r)$,
	we have
	$$f_r(\theta) = \left\{f_1D_1(\theta),...,f_1D_r(\theta)\right\} = \left\{x_0,...,x_0\right\}.$$
	That is, $\alpha$ is the trivial sequence.
\end{proof}

\subsection{Iterated suspensions are coalgebras over the little cubes operad}\label{sec:Iterated suspensions are coalgebras}

In this section, we show that the $n$-fold reduced suspension $\Sigma^n X$ of 
a pointed space $X$ is a coalgebra over the little $n$-cubes operad. 
These spaces are the paradigmatic examples of $\mathcal C_n$-coalgebras.
To show our results, we use the coendomorphism version of $\mathcal C_n$-coalgebras.
At the end of the section, we explain how the results in this paper allows us to swiftly recover the classical $\mathcal C_n$-algebra structure on $n$-fold loop spaces as a convolution structure.
The $\mathcal C_n$-coaction on $S^n$ that we describe in this section has previously appeared, in the context of the factorization homology, in \cite{ginot12}.

\begin{theorem} \label{teo: Iterated suspensions are coalgebras}
	The $n$-fold reduced suspension of a pointed space $X$ is a $\mathcal C_n$-coalgebra.
	More precisely, 
	there is a natural and explicit operad map
	\begin{equation*}
		\nabla: \mathcal{C}_n \rightarrow \coend_{\Sigma^n X}
	\end{equation*} 
that encodes the homotopy coassociativity and homotopy cocommutativity of the classical pinch map $\Sigma^nX \to \Sigma^nX \vee \Sigma^nX$.
In particular, the pinch map  is an operation associated to an element of $\mathcal C_n(2)$. 
Furthermore, for any based map $X\to Y$, the induced map $\Sigma^n X \to \Sigma^n  Y$ extends to a morphism of \ $\mathcal C_n$-coalgebras.
\end{theorem}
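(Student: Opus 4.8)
The plan is to construct $\nabla$ on the sphere level first and then smash with $X$. For a single little $n$-cube $c\colon I^n\to I^n$, I would define a collapse map $\bar c\colon S^n\to S^n$ on $S^n=I^n/\partial I^n$ by sending $[t]$ to $[c^{-1}(t)]$ whenever $t$ lies in the image of $c$, and to the basepoint otherwise. Since $c$ is an affine rectilinear embedding, $c^{-1}$ is continuous on the image and carries the boundary of that image into $\partial I^n$, i.e.\ to the basepoint of $S^n$; this is exactly what makes $\bar c$ continuous across the frontier of $c(I^n)$, where the two branches of the definition meet at the basepoint. Given a configuration $\theta=(c_1,\dots,c_r)\in\mathcal C_n(r)$, I would assemble the $\bar c_i$ into a single map $\delta(\theta)\colon S^n\to (S^n)^{\vee r}$ sending $[t]$ into the $i$-th wedge factor as $\bar c_i(t)$ when $t\in\mathring c_i$, and to the basepoint when $t$ lies in no open cube. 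The disjointness of the interiors $\mathring c_i$ guarantees that at most one factor is nonbasepoint, so $\delta(\theta)$ genuinely lands in the wedge rather than the product, and permuting the cubes permutes the factors, giving $S_r$-equivariance.

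The main structural step is to check that $\delta$ is a morphism of operads $\delta\colon\mathcal C_n\to\coend_{S^n}$. The unit axiom is immediate: the identity cube $1\in\mathcal C_n(1)$ has $c^{-1}=\id$, so $\delta(1)=\id_{S^n}$. For compatibility with operadic composition, recall that $\gamma(\theta;\theta_1,\dots,\theta_r)$ has little cubes $c_j\circ d^j_k$, whereas the coendomorphism composite $(\delta(\theta_1)\vee\cdots\vee\delta(\theta_r))\circ\delta(\theta)$ first collapses via $\theta$ and then applies $\delta(\theta_j)$ inside the $j$-th factor. A point $[t]$ lands nontrivially in the $(j,k)$-slot on the left precisely when $t\in c_j(\mathring d^j_k)$, which is equivalent to $t\in\mathring c_j$ together with $c_j^{-1}(t)\in\mathring d^j_k$---exactly the condition governing the right-hand side---and on this locus both maps return $[(d^j_k)^{-1}(c_j^{-1}(t))]$ since $(c_j\circ d^j_k)^{-1}=(d^j_k)^{-1}\circ c_j^{-1}$. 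Hence the two maps agree.

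With $\delta$ in hand I would obtain $\nabla$ by smashing with $X$. The functor $(-)\wedge X\colon\mathsf{Top_*}\to\mathsf{Top_*}$ is symmetric monoidal for the wedge product, via the canonical homeomorphism $(A\vee B)\wedge X\cong(A\wedge X)\vee(B\wedge X)$, so it induces a morphism of coendomorphism operads $\coend_{S^n}\to\coend_{S^n\wedge X}=\coend_{\Sigma^nX}$; composing with $\delta$ yields the desired $\nabla$. Concretely $\nabla(\theta)=\delta(\theta)\wedge\id_X$ sends $[t,x]$ into the $i$-th factor as $[c_i^{-1}(t),x]$ when $t\in\mathring c_i$, and to the basepoint otherwise. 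The pinch map is recovered by taking $\theta\in\mathcal C_n(2)$ to be the two cubes splitting $I^n$ in half along the first coordinate, and the homotopy coassociativity and cocommutativity are then automatic consequences of $\nabla$ being a map of operads together with the fact that the two configurations implementing each coherence lie in the same path component of $\mathcal C_n(r)$---which holds for coassociativity in all arities, and for cocommutativity once $n\geq 2$, reflecting $\mathcal C_n(2)\simeq S^{n-1}$. Finally, for a based map $\phi\colon X\to Y$ the induced $\Sigma^n\phi=\id_{S^n}\wedge\,\phi$ alters only the $X$-coordinate while $\nabla(\theta)$ alters only the $S^n$-coordinate, so the squares of Definition~\ref{def:coendomorphism coalgebra} commute on the nose, exhibiting $\Sigma^n\phi$ as a morphism of $\mathcal C_n$-coalgebras.

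The step I expect to be the main obstacle is verifying that these constructions are continuous as maps out of $\mathcal C_n(r)\times\Sigma^nX$ (equivalently, that $\delta$ takes values in the mapping spaces continuously), since the defining formula switches between the affine inverse $c_i^{-1}$ and the constant basepoint map as a point crosses the frontier of a little cube. This requires care in the compactly generated setting to match the two branches precisely at the collapsed boundary; once continuity is secured, the operad-map identities above are purely formal geometric verifications.
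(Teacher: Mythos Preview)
Your proposal is correct and follows essentially the same approach as the paper: first construct the operad map $\delta\colon\mathcal C_n\to\coend_{S^n}$ via the collapse maps $t\mapsto[c_i^{-1}(t)]$ (this is the paper's Proposition~\ref{prop: Spheres}), and then extend to $\Sigma^nX$ by smashing with $\id_X$ using the distributivity $(S^n)^{\vee r}\wedge X\cong(\Sigma^nX)^{\vee r}$. You in fact supply more detail than the paper does---the explicit verification of the operad axioms and the discussion of continuity across cube boundaries are left to the reader there---so nothing is missing.
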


To prove the result above, we proceed in two steps. 
First, we prove it for connected spheres, which are particular cases of reduced suspensions.
That is, we show that the sphere $S^n$ is a coalgebra over the little $n$-cubes operad for every $n\geq1$.
This is Proposition \ref{prop: Spheres} below. 
Building on top of this preliminary result, 
we go on to prove Theorem \ref{teo: Iterated suspensions are coalgebras},
extending the result from connected spheres to any $n$-fold reduced suspension.

\begin{proposition}
\label{prop: Spheres}
	For every $n\geq 1$, there is a natural and explicit morphism of operads 
	$$\nabla:\mathcal{C}_n\rightarrow \coend_{S^n}$$ turning the $n$-sphere into a $\mathcal{C}_n$-coalgebra, 
	so that all properties of Theorem \ref{teo: Iterated suspensions are coalgebras} for $\Sigma^n X=S^n$ hold true. 
\end{proposition}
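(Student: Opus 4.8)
The plan is to realize $S^n$ as the quotient $I^n/\partial I^n$, with basepoint the collapsed boundary, and to define $\nabla$ so that a configuration of little $n$-cubes is sent to the evident ``multi-pinch'' collapse map. Concretely, for a configuration $\theta=(c_1,\dots,c_r)\in\mathcal C_n(r)$ I would set $\nabla_r(\theta)\colon S^n\to (S^n)^{\vee r}$ on a representative $x\in I^n$ by
$$
\nabla_r(\theta)(x)=
\begin{cases}
\big[\,c_i^{-1}(x)\,\big]_i & \text{if } x\in\mathring c_i,\\
* & \text{if } x\notin\bigcup_{i}\mathring c_i,
\end{cases}
$$
where $\big[\,\cdot\,\big]_i$ denotes the class landing in the $i$-th wedge factor. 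Since the open images $\mathring c_i$ are pairwise disjoint, $x$ lies in at most one of them, so the target is genuinely the wedge $(S^n)^{\vee r}\subseteq (S^n)^{\times r}$ rather than merely the product.

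First I would check that $\nabla_r(\theta)$ is a well-defined based continuous map. Because each rectilinear embedding satisfies $\mathring c_i\subseteq J^n$, no point of $\partial I^n$ lies in any $\mathring c_i$; hence $\partial I^n$ is sent to the basepoint and the formula descends to $S^n$. Continuity away from the cube boundaries is clear, and at $\partial(c_i(I^n))$ the two branches agree: as $x\to\partial(c_i(I^n))$ from inside $\mathring c_i$ one has $c_i^{-1}(x)\to\partial I^n$, so $\big[\,c_i^{-1}(x)\,\big]_i\to *$, matching the value on the complement. I would then verify that $\nabla_r$ is continuous as a map $\mathcal C_n(r)\to\Map_*(S^n,(S^n)^{\vee r})$ by passing to the adjoint $\mathcal C_n(r)\times S^n\to (S^n)^{\vee r}$ and using that the cube data, together with the inverses $c_i^{-1}$, depend continuously on the configuration (here the compactly generated hypothesis makes the adjunction behave). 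Equivariance is immediate: permuting the $c_i$ permutes which wedge factor each region is sent to, which is exactly the $S_r$-action on $\coend_{S^n}(r)$.

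Next I would verify the operad axioms. The unit $\id\in\mathcal C_n(1)$ is the identity embedding, for which $\nabla_1(\id)=\id_{S^n}$, so $\nabla$ is unital. The essential point is compatibility with operadic composition. Unwinding the definition of $\gamma$ in $\coend_{S^n}$, the claim $\nabla(\gamma(c;d^{(1)},\dots,d^{(r)}))=(\nabla(d^{(1)})\vee\cdots\vee\nabla(d^{(r)}))\circ\nabla(c)$ reduces, point by point, to the identity $(c_i\circ d^{(i)}_j)^{-1}=(d^{(i)}_j)^{-1}\circ c_i^{-1}$: a point $x$ lies in the interior of the inserted cube $c_i\circ d^{(i)}_j$ precisely when $x\in\mathring c_i$ and $c_i^{-1}(x)\in\mathring{d^{(i)}_j}$, and in that case both sides return the class of $(d^{(i)}_j)^{-1}(c_i^{-1}(x))$ in the appropriate factor, while otherwise both return the basepoint. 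This is the heart of the argument, and it is a clean verification once the inverse-of-a-composite identity is recorded.

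Finally I would read off the remaining assertions of Theorem \ref{teo: Iterated suspensions are coalgebras} in the case $X=S^n$. Choosing any fixed configuration of two disjoint cubes exhibits the pinch map $S^n\to S^n\vee S^n$ as $\nabla_2$ of an element of $\mathcal C_n(2)$. The homotopy coassociativity and, for $n\geq 2$, homotopy cocommutativity are then encoded by the operad structure itself: the two iterations of the pinch correspond to configurations joined by a path in $\mathcal C_n(3)$, and the two orderings of a $2$-cube configuration are joined by a path in $\mathcal C_n(2)$ exactly when $n\geq 2$ (for $n=1$ they lie in distinct components, reflecting that a single suspension is only homotopy coassociative), with all such homotopies produced by continuity of $\nabla$. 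The hardest part to get right is not any single axiom but the two layers of continuity — continuity of each collapse map at the cube boundaries, and continuity of $\nabla_r$ in the configuration variable — since everything else is forced by the combinatorics of rescaling cubes.
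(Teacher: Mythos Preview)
Your proposal is correct and takes essentially the same approach as the paper: the same identification $S^n=I^n/\partial I^n$ and the same pointwise formula $\nabla_r(c)(t)=[c_i^{-1}(t)]$ if $t\in\mathring c_i$ and $*$ otherwise. In fact you supply more detail than the paper does—the paper simply writes down the formula and declares that continuity, basedness, and the operad axioms are straightforward to check, whereas you sketch the boundary-continuity argument, the adjoint continuity-in-the-configuration argument, equivariance, and the key identity $(c_i\circ d^{(i)}_j)^{-1}=(d^{(i)}_j)^{-1}\circ c_i^{-1}$ underlying compatibility with $\gamma$.
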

\begin{proof}

	 Let us define the arity $r$ component of $\nabla$. 
	 This is a map $$\nabla_r: \mathcal C_n(r)\to \CoEnd_{S^n}(r) = \Map_*\left(S^n, S^n \vee ... \vee S^n\right).$$
	 For $c = \left(c_1,...,c_r\right)\in \mathcal C_n(r)$ a configuration of little $n$-cubes, we
	 define the pointed map 
	 \begin{center}
	 	\begin{tikzcd}[row sep=tiny]
	 		\nabla_r(c) : S^n \arrow[r] & \left(S^n\right)^{\vee r} \\
	 		t \arrow[r, maps to]        & \nabla_r(c)(t)       	\end{tikzcd}
	 \end{center}
 as follows. 
Identify $S^n = I^n /\partial I^n$.
Then $t\in S^n$ is either the base point $t = \{\partial I^n\}$ or else it is an interior point of the $n$-cube $I^n$.
If $t$ is interior, 
then there is at most a single cube $c_i$ such that $t\in \mathring{c_i}$.
Define
$$\nabla_r(c)(t) = 
\begin{cases} \left[c_i^{-1}(t)\right] &\mbox{if } t\in \mathring{c_i}, \\
	 	* & \mbox{otherwise.}
\end{cases}$$
Here, $\left[c_i^{-1}(t)\right]$
denotes the class of $c_i^{-1}(t)$ as the corresponding 
point in the $i$-th factor of the wedge $S^n \vee ... \vee S^n$.
So defined, the maps $\nabla_r(c)$ are pointed, continuous and turn $\nabla$ into a morphism of operads. 
These last assertions are straightforward to check and left to the reader.
\end{proof}

We prove next that the little $n$-cubes coalgebra structure on the sphere $S^n$ just described 
induces the little $n$-cubes coalgebra structure on an arbitrary $n$-fold reduced suspension.

\medskip

{\noindent \it Proof of Theorem \ref{teo: Iterated suspensions are coalgebras}:} 
Let $\Sigma^nX$ be the $n$-fold reduced suspension of a pointed space $X$.
Write $\Sigma^nX=S^n\wedge X$,
and recall that for any three pointed spaces $A$, $Y$ and $Z$, 
the wedge and smash product distribute over each other \cite[S. 4.F ]{Hat02}, i.e.
\begin{equation*}
	A \wedge \left( Y \vee Z \right) \cong \left(A  \wedge Y \right) \vee \left( A  \wedge Z \right).
\end{equation*} 
In particular, when we take $A$ to be $S^n$,
\[
\Sigma^n\left(Y \vee Z\right) \cong \Sigma^n Y \vee \Sigma^n Z.
\]
Now, for $c\in \mathcal{C}_n(r)$, 
define the map 
$\Sigma^nX\to \left(\Sigma^nX\right)^{\vee r}$ 
as the composition
\begin{equation*}
	\Sigma^nX \cong S^n \wedge X \xrightarrow{\nabla_r(c) \wedge \id_X} \left( \left(S^n\right)^{\vee r}\right) \wedge X \xrightarrow{\cong} \left( S^n \wedge X \right)^{\vee r} \cong \left(\Sigma^nX\right)^{\vee r},
\end{equation*} where $\nabla_r$ is the arity $r$ component of the map $\nabla$ defined in Proposition \ref{prop: Spheres}. 
All these maps are continuous,
commute with the symmetric group actions and the operadic composition maps, producing a functorial construction.
Alternatively, one can define the operad map $$\CoEnd_{S^n} \to \CoEnd_{\Sigma^nX}$$  given (up to isomorphism) by $f \mapsto f\wedge \id_X$, and precompose it with the operad map of Proposition \ref{prop: Spheres}. 
Doing this, one ends up with the map we described before.
In this sense, the $\mathcal C_n$-coalgebra structure of an $n$-fold suspension always factors through the $\mathcal C_n$-coalgebra structure of $S^n.$
\hfill$\square$ \\

\begin{remark}
The defined operad map $\nabla: \mathcal C_n \to \CoEnd_{\Sigma^nX}$  
is determined by its arity $1$ component $\nabla_1: \mathcal C_n(1)\times \Sigma^nX \to \Sigma^n X$. 
Being more precise, 
it is a consequence of Proposition \ref{Lemma:Determined by 1st comp} that
the following formula holds for all $c \in \mathcal C_n(r)$ and $z\in \Sigma^n X$: 
	$$\pi_i\left( \nabla_r\left(c,z\right)\right) 
	= \nabla_{r-1}\left(d_i\left(c\right),z\right),$$ 
where $\pi_i$ and $d_i$ are the wedge collapse and restriction operators from Section \ref{sec: Topological comonads}.
\end{remark}

In the remainder of the section, 
we explain how the coalgebraic framework 
introduced in this work let us swiftly recover the classical result by May that iterated loop spaces are algebras over the little 
$n$-cubes operad. For this, we first need to define fold algebras in the category of pointed spaces  with the wedge product $\vee$.

\begin{definition}
Let $X$ be a pointed space.
The \emph{fold endomorphism operad} $\operatorname{End}^{\vee}_X$ is  the operad whose arity $r$ component is given by 
\[
\operatorname{End}^{\vee}_X(r)=\Map_*\left(X^{\vee r},X\right),
\]
with the composition map given by inserting the output of a map 
into the input,
and the symmetric group action is given by permuting the inputs.
If  $\P$ is an operad in unpointed spaces, 
then a fold $\P$-algebra is a pointed space $X$ together with a morphism of operads $\P \to \End^\vee_X$.
Equivalently: $\operatorname{End}^{\vee}_X$ is the endomorphism operad in the category of pointed spaces together 
with the wedge product as symmetric monoidal structure.
\end{definition}

We leave it to the reader to check that the definition above gives an operad. 
Every pointed space is canonically a commutative fold-algebra,
where the products are given by the canonical fold maps (which explains the name).

\smallskip

Let $\P$ and $\mathcal Q$ be operads in unpointed spaces.
We can now define a convolution algebra in pointed spaces between a $\P$-coalgebra and a fold $\mathcal Q$-algebra
by making use of the definition of a fold $\mathcal Q$-algebra.
Denote by $\P \times \mathcal Q$ the arity-wise product of $\P$ and $\mathcal Q$.

\begin{proposition}
Let $\P$ and $\mathcal Q$ be operads in unpointed spaces.
Let $X$ be a $\P$-coalgebra and $Y$ a fold $\mathcal Q$-algebra. 
Then the pointed mapping space
$\Map_*\left(X,Y\right)$ is a $\P\times \mathcal Q$-algebra.
The structure maps
$$\gamma: \P(r)\times \mathcal Q(r) \times \Map_*(X,Y)^{\times r} \to \Map_*(X,Y)$$ 
applied to pointed maps $f_1,...,f_r : X\to Y$ and operations $(\theta,\nu)\in \P(r) \times \mathcal Q(r)$ is
explicitly given by 
\[
\gamma\left((\theta,\nu);f_1,...,f_r\right) = \left(\nu \circ \left(f_1 \vee \cdots \vee f_r\right)\circ \Delta\right) (\theta).
\]
Here,  $\Delta:\P \to \operatorname{CoEnd}_X$ is the $\P$-coalgebra structure map of $X$.
\end{proposition}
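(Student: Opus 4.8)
The plan is to realize the stated convolution as a composite of operad morphisms, isolating the one genuinely new ingredient — a \emph{convolution operad map}. Recall that equipping a space $Z$ with a $\P\times\mathcal Q$-algebra structure is the same datum as a morphism of operads $\P\times\mathcal Q\to\operatorname{End}_Z$, where $\operatorname{End}_Z(r)=\Map\left(Z^{\times r},Z\right)$ is the ordinary (cartesian) endomorphism operad. Since the aritywise product of two operad maps is an operad map into the product operad, and since the hypotheses furnish operad maps $\Delta:\P\to\CoEnd_X$ (the $\P$-coalgebra structure) and $\mathcal Q\to\operatorname{End}^{\vee}_Y$ (the fold $\mathcal Q$-algebra structure), it suffices to produce a natural operad map
\[
\Theta:\CoEnd_X\times\operatorname{End}^{\vee}_Y\longrightarrow\operatorname{End}_{\Map_*(X,Y)}
\]
and define the algebra structure as the composite $\P\times\mathcal Q\xrightarrow{\,\Delta\times(\text{fold})\,}\CoEnd_X\times\operatorname{End}^{\vee}_Y\xrightarrow{\,\Theta\,}\operatorname{End}_{\Map_*(X,Y)}$. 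Evaluating this composite on $(\theta,\nu)$ recovers precisely the asserted formula $\left(\nu\circ(f_1\vee\cdots\vee f_r)\circ\Delta\right)(\theta)$.

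I would define $\Theta$ in arity $r$ by sending a pair $(\phi,\psi)\in\Map_*\left(X,X^{\vee r}\right)\times\Map_*\left(Y^{\vee r},Y\right)$ to the operation $(f_1,\dots,f_r)\mapsto\psi\circ(f_1\vee\cdots\vee f_r)\circ\phi$. Well-definedness is immediate: every factor of this composite is a based map, so the result lies in $\Map_*(X,Y)$, and continuity follows from continuity of composition and of the wedge on mapping spaces of compactly generated spaces; naturality in $X$ and $Y$ is equally clear. The unit axiom is direct, as the pair of operadic units $(\id_X,\id_Y)$ is sent to $f\mapsto\id_Y\circ f\circ\id_X=f$.

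The substance is compatibility with operadic composition. Given $\phi\in\CoEnd_X(n)$, $\phi_i\in\CoEnd_X(m_i)$, $\psi\in\operatorname{End}^{\vee}_Y(n)$ and $\psi_i\in\operatorname{End}^{\vee}_Y(m_i)$, I would expand both sides using the explicit structure maps: composition in $\CoEnd_X$ is $(\phi_1\vee\cdots\vee\phi_n)\circ\phi$ (Definition \ref{def:Coend operad}), while composition in $\operatorname{End}^{\vee}_Y$ inserts outputs into inputs and hence equals $\psi\circ(\psi_1\vee\cdots\vee\psi_n)$. Writing $m=\sum m_i$ and feeding in $g_1,\dots,g_m$, both $\Theta\left(\gamma(\phi;\phi_\bullet),\gamma(\psi;\psi_\bullet)\right)$ and $\gamma\left(\Theta(\phi,\psi);\Theta(\phi_1,\psi_1),\dots,\Theta(\phi_n,\psi_n)\right)$ reduce to the single composite
\[
\psi\circ(\psi_1\vee\cdots\vee\psi_n)\circ(g_1\vee\cdots\vee g_m)\circ(\phi_1\vee\cdots\vee\phi_n)\circ\phi.
\]
On the right-hand side this uses that the wedge of the block operations $\psi_i\circ(\text{block of }g\text{'s})\circ\phi_i$ rearranges, by the interchange law $(a\vee b)\circ(c\vee d)=(a\circ c)\vee(b\circ d)$, into the displayed string; this is where the index bookkeeping lives, but it is pure rewriting.

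Finally, equivariance is an analogous check: the $S_r$-action rotates the wedge factors in the output of $\phi$ and the inputs of $\psi$, and pushing a single $\sigma$ through the composite shows that these two permutations combine to permute the arguments $f_1,\dots,f_r$, matching the action on $\operatorname{End}_{\Map_*(X,Y)}$. I expect no serious obstacle here; the only delicate point is fixing mutually consistent conventions for the three symmetric-group actions so that this cancellation closes up, together with keeping track of the block decomposition in the composition axiom. Both become routine once the convolution map $\Theta$ has been isolated, which is the conceptual heart of the argument.
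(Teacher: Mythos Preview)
Your proposal is correct and, in fact, supplies considerably more detail than the paper itself, whose proof consists of the single sentence ``This is similar to the construction in Section 1 of \cite{Ber03} and is left to the reader.'' The convolution-operad factorization $\P\times\mathcal Q\to\CoEnd_X\times\operatorname{End}^{\vee}_Y\to\operatorname{End}_{\Map_*(X,Y)}$ that you isolate is exactly the standard argument this reference points to, so you have effectively filled in what the authors omitted.
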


\begin{proof}
This is similar to the construction in Section 1 of \cite{Ber03} and is left to the reader.
\end{proof}

In particular, $n$-fold loop spaces fall into the framework described in the previous result.
Since every pointed space is canonically a commutative fold algebra,
and the arity-wise product of $\mathcal C_n$ with the commutative operad is isomorphic to $\mathcal C_n$, 
we recover May's classical $\mathcal C_n$-algebra structure  on loop spaces as follows (see \cite{May72}).

\begin{corollary}
	Let $\Omega^n X$ be an $n$-fold loop space.
	Then, 
	the $\mathcal C_n$-algebra 
	structure on 
	$$\Omega^n X = \Map_*\left(S^n ,X\right)$$
	induced by the $\mathcal C_n$-coalgebra structure of $S^n$ and the fold $\mathsf{Com}$-algebra structure on $X$
	as a convolution algebra
 is exactly the  classical $\mathcal C_n$-algebra structure on loop spaces.
\end{corollary}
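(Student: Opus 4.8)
The plan is to unwind both structures explicitly and verify they agree operation-by-operation. The convolution $\mathcal{C}_n$-algebra structure on $\Map_*(S^n, X)$ comes from the proposition applied with $\P = \mathcal{C}_n$ (acting coalgebraically on $S^n$ via $\nabla$) and $\mathcal{Q} = \mathsf{Com}$ (the fold algebra structure on $X$, using the canonical fold maps $X \vee \cdots \vee X \to X$). Since the arity-wise product $\mathcal{C}_n \times \mathsf{Com} \cong \mathcal{C}_n$, the resulting $\mathcal{C}_n \times \mathsf{Com}$-algebra is just a $\mathcal{C}_n$-algebra, and the task is to identify its structure map with May's.

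First I would write down the convolution operation explicitly. For a configuration $c = (c_1, \dots, c_r) \in \mathcal{C}_n(r)$ and loops $f_1, \dots, f_r \in \Omega^n X = \Map_*(S^n, X)$, the proposition gives
\begin{equation*}
\gamma(c; f_1, \dots, f_r) = \mathrm{fold} \circ (f_1 \vee \cdots \vee f_r) \circ \nabla_r(c),
\end{equation*}
a pointed map $S^n \to X$. Next I would substitute the explicit formula for $\nabla_r(c)$ from Proposition~\ref{prop: Spheres}: given $t \in S^n = I^n/\partial I^n$, the map $\nabla_r(c)$ sends $t$ to $[c_i^{-1}(t)]$ in the $i$-th wedge factor if $t \in \mathring{c_i}$, and to the basepoint otherwise. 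Composing with $(f_1 \vee \cdots \vee f_r)$ and then the fold map, I would obtain the pointwise description: $\gamma(c; f_1, \dots, f_r)(t) = f_i(c_i^{-1}(t))$ when $t$ lies in the image of the interior of $c_i$, and the basepoint of $X$ otherwise.

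I would then recall May's classical definition of the $\mathcal{C}_n$-action on $\Omega^n X$ from \cite{May72}: a configuration $c$ and loops $f_1, \dots, f_r$ are combined by reparametrizing each $f_i$ to run over the little cube $c_i$ and collapsing the complement of $\bigcup \mathring{c_i}$ to the basepoint — which is exactly the pointwise formula just derived. The core of the verification is thus matching these two pointwise descriptions and then confirming that the incidental coherence checks (that both assignments are continuous, pointed, $S_r$-equivariant, and compatible with operadic composition) agree; these follow from the fact that $\nabla$ is already an operad map and the fold maps assemble into an operad map, so the convolution structure is automatically an operad map with the same underlying formula as May's.

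The main obstacle is bookkeeping rather than conceptual: one must be careful with the identification $\mathcal{C}_n \times \mathsf{Com} \cong \mathcal{C}_n$ and with the basepoint conventions, since the regions where $t$ lies outside every $\mathring{c_i}$ are sent to the basepoint by $\nabla_r(c)$ and hence by the whole composite, matching May's collapse of the complement. Because $\nabla$ encodes precisely the geometry of the little cubes acting on $S^n$, and the fold map on $X$ contributes no reparametrization data, I expect the two structures to coincide on the nose rather than merely up to homotopy, so the verification reduces to the pointwise comparison above.
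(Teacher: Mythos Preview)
Your proposal is correct and follows essentially the same approach as the paper: the paper's proof writes down the convolution product as the composite $S^n \xrightarrow{\nabla(\theta)} (S^n)^{\vee r} \xrightarrow{\alpha_1 \vee \cdots \vee \alpha_r} X^{\vee r} \xrightarrow{\mu_r} X$ and then simply asserts that ``one checks'' this coincides with May's structure from \cite[Section 5]{May72}. You have carried out that check in more detail by unwinding the pointwise formula, which is exactly what the paper leaves implicit.
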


\begin{proof}
	By definition, each map $S^n \to S^n \vee \cdots \vee S^n$ arising from the $\mathcal C_n$-coalgebra structure of $S^n$ induces the following convolution product on an $n$-fold loop space $\Omega^n X$.
	Given $\alpha_1,...,\alpha_r:S^n\rightarrow X$ and $\theta \in \mathcal{C}_n(r)$,  
	define $\gamma(\alpha_1,...,\alpha_r)$ as 
	\[
	S^n \xrightarrow{\nabla(\theta)} \left( S^n \right)^{\vee r} \xrightarrow{\alpha_1 \vee ... \vee \alpha_r} X^{\vee r} \xrightarrow{\mu_r} X,
	\]
	where $\mu_r\in \mathsf{Com}(r)$ is the $r$th fold map. 
	Here, $\mathsf{Com}$ is the commutative operad.
	One checks that these maps are exactly the maps described in \cite[Section 5]{May72}.
\end{proof}

\section{The Approximation Theorem}\label{sec:approximation theorem}

To prove the recognition principle for $n$-fold loop spaces,
as well as to develop a unified theory of homology operations for them, 
May proved the \textit{approximation theorem} \cite[Theorem 6.1]{May72}.
The proof of this result consists of giving a morphism of monads from the monad $M_n$ 
associated to
the little $n$-cubes operad to the monad $\Omega^n \Sigma^n$, 
and proving that this natural transformation is a homotopy equivalence on  connected spaces.
In this section, 
we prove an Eckmann--Hilton dual result to approximate the comonad $\Sigma^n \Omega^n$. 

\begin{theorem} 
	\label{teo: Approx Theorem}
	For every $n\geq 1$, there is a natural morphism of comonads 
	$$\alpha_n : \Sigma^n \Omega^n \longrightarrow C_n.$$
	Furthermore, 
	for every pointed  space $X$, 
	there is an explicit natural deformation retract of pointed spaces
	\begin{center}
\begin{tikzcd}
\Sigma^n \Omega^n X \arrow[r, shift left] & C_n(X) \arrow[l, shift left] \arrow[loop, distance=2em, in=125, out=55]
\end{tikzcd}
	\end{center}
	In particular, $\alpha_n(X)$ is a homotopy equivalence.
\end{theorem}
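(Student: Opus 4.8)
The plan is to build $\alpha_n$ from the sphere coalgebra structure $\nabla$ together with evaluation, to verify the comonad axioms by reducing them to the operad-map property of $\nabla$, and then to produce the homotopy retract by an explicit cube-shrinking deformation. First I would define $\alpha_n(X)\colon\Sigma^n\Omega^nX\to C_n(X)$ on points by declaring $\alpha_n(X)[t,\omega]$ to be the map $c\mapsto\omega\bigl(\nabla_1(c)(t)\bigr)$, where $t\in S^n$, $\omega\in\Omega^nX=\Map_*(S^n,X)$, $c\in\mathcal{C}_n(1)$, and $\nabla_1\colon\mathcal{C}_n(1)\to\Map_*(S^n,S^n)$ is the arity-one component of Proposition \ref{prop: Spheres}. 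Explicitly $\alpha_n(X)[t,\omega](c)=\omega\bigl([c^{-1}(t)]\bigr)$ if $t\in\mathring c$ and $=*$ otherwise. I would check that this is well defined on the smash product (it sends $t=*$ or $\omega=*$ to the basepoint of $C_n(X)$), that it is continuous (as $t$ approaches $\partial\mathring c$ one has $c^{-1}(t)\to\partial I^n$, so $\omega([c^{-1}(t)])\to\omega(*)=*$ because loops are pointed), and that it lands in $C_n(X)$: if $\mathring c_1\cap\mathring c_2=\emptyset$ then $t$ lies in at most one of them, so property $(D)$ of Proposition \ref{Prop: Geometric characterization of C_n} holds. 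Naturality in $X$ is immediate, since $C_n$ acts by postcomposition.

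Next I would verify that $\alpha_n$ is a morphism of comonads. The counit axiom is immediate: the operadic unit is the identity cube, so $\nabla_1(\id)=\id_{S^n}$ and $\varepsilon^{C_n}_X(\alpha_n[t,\omega])=\alpha_n[t,\omega](\id)=\omega(t)$, which is exactly the counit $\Sigma^n\Omega^nX\to X$ of the suspension–loop adjunction. For the comultiplication axiom I would unwind both comonad coproducts on a point $[t,\omega]$: using the description $\Delta^{C_n}(f)(c)(d)=f(\gamma(c;d))$ and the fact that the $\Sigma^n\Omega^n$-coproduct is induced by the adjunction unit, both composites reduce to the assignment $c\mapsto\bigl(d\mapsto\omega(\nabla_1(d)(\nabla_1(c)(t)))\bigr)$. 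The only nontrivial input is the identity $\nabla_1(c\circ d)=\nabla_1(d)\circ\nabla_1(c)$, i.e. that $\nabla$ respects operadic composition, which follows from $\nabla$ being an operad map. This step is computational but routine.

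The substantive part is the homotopy retract. I would produce an explicit homotopy $H\colon C_n(X)\times I\to C_n(X)$ from the identity (at time $1$) to a map of the form $\alpha_n\circ r$ (at time $0$), obtained by shrinking all little cubes toward a concentration point determined by $f$. For fixed time this shrinking is an affine rescaling $\kappa_\lambda^f$ about that point; it preserves disjointness of interiors, so $f\mapsto f\circ\kappa_\lambda^f$ remains in $C_n(X)$ by property $(D)$. On the image of $\alpha_n$ the concentration point of $\alpha_n[t,\omega]$ is $t$, and rescaling about $t$ preserves the relative position $c^{-1}(t)$ of $t$ inside each cube; hence $\alpha_n[t,\omega]$ is fixed by the whole homotopy, and the time-$0$ map then defines a retraction $r$ with $r\circ\alpha_n=\id$. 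The weak-equivalence conclusion is afterwards formal: such a homotopy retract, with $\alpha_n$ a section and $\alpha_n\circ r\simeq\id_{C_n(X)}$, exhibits $\alpha_n(X)$ as a homotopy equivalence, a fortiori a weak equivalence, and naturality upgrades this to a morphism of comonads that is a levelwise equivalence.

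The hard part will be making the concentrating deformation genuinely continuous at the degenerate end of the shrinking, and in particular locating the concentration point continuously in $f$. The key structural fact that makes this work is property $(D)$: the cubes on which $f$ is not the basepoint pairwise overlap, and since axis-parallel boxes enjoy the one-dimensional Helly property coordinatewise, such a family has a common point; this rigidifies the support into a single cluster whose germ is precisely a point of $S^n\wedge\Omega^nX$. Continuity at the limit, and at the basepoint of $C_n(X)$, is controlled by the pointedness of loops, which forces the relevant values to tend to $*$ as cubes approach the boundary or degenerate. Carrying out this concentration carefully, so that $H$ is continuous, natural in $X$, preserves $(D)$ throughout, and lands in the image of $\alpha_n$ at time $0$, is where essentially all of the work lies.
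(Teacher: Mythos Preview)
Your definition of $\alpha_n$ and the verification that it is a morphism of comonads are correct and essentially identical to the paper's (the paper defers the comonad check to an appendix but does exactly what you sketch). Your identification of the ``concentration point'' via property $(D)$ and the coordinatewise Helly property is also exactly right: the paper calls this the center of the \emph{cubical support} $\operatorname{CSupp}(f)=\bigcap_{f(c)\neq *}\operatorname{Im}(c)$, and proves it is a point or an $n$-rectangle.

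The gap is in your deformation. You propose to shrink cubes toward the concentration point $p$ via an affine rescaling $\kappa_\lambda^f$ and set $H_\lambda(f)=f\circ\kappa_\lambda^f$. At $\lambda\to 0$ the cubes $\kappa_\lambda^f(c)$ degenerate to the point $p$, which is not in $\mathcal C_n(1)$, and nothing forces $f$ to have a limit along such families. Concretely, in $C_1(I)$ with basepoint $0$, take
\[
f([a,b])=\frac{4(\tfrac12-a)(b-\tfrac12)}{(b-a)^2}\cdot\frac{1+\sin\bigl(1/(b-a)\bigr)}{2}
\]
for $\tfrac12\in(a,b)$ and $f=0$ otherwise. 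This is continuous on $\mathcal C_1(1)$, satisfies property $(D)$ (disjoint intervals cannot both contain $\tfrac12$), and the first factor is invariant under affine shrinking about $\tfrac12$. Hence $f(\kappa_\lambda([a,b]))$ oscillates with $\sin(1/(\lambda(b-a)))$ and has no limit as $\lambda\to 0$. So $H_0$ is not defined, and your hope that ``pointedness of loops'' forces convergence fails: for a general $f\in C_n(X)$ there is no loop yet, and continuity of $f$ on $\mathcal C_n(1)$ imposes no control on degenerating cubes.

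The paper's fix is to run the deformation in the opposite direction: it \emph{expands} each cube $c$ (while keeping the preimage of $p=\operatorname{Cent}(f)$ fixed) until it touches $\partial I^n$. Expanded cubes remain in $\mathcal C_n(1)$ throughout, so there is no degenerate limit to worry about, and at the end every cube has been replaced by the unique maximal cube $c_{s,p}$ sending $s=c^{-1}(p)$ to $p$. The retraction is then $\Psi(f)=[\,p,\ \ell\,]$ with $\ell(s)=f(c_{s,p})$, and one checks $\Psi\circ\alpha_n=\id$ directly. Your observation that rescaling about $t$ preserves $c^{-1}(t)$, hence fixes $\alpha_n[t,\omega]$, is correct and is also what makes the paper's expansion fix the image of $\alpha_n$; the essential new idea you are missing is that expansion, not contraction, is the move that stays inside $\mathcal C_n(1)$ and lands on a map parametrized by $S^n$.
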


The proof of the result above does not consist of a dualization of the corresponding proof of May's proof in the case of loop spaces.
We take a different route which has the 
advantages of giving explicit homotopies and not requiring auxiliary spaces as is needed in May's original approach. 
Furthermore: we produce a homotopy equivalence, not just a weak equivalence as in the case of loop spaces.
It is not  clear at the moment whether the methods employed in this paper can be used
to give an alternative proof of the loop space approximation theorem.

\medskip

Let $n\geq 1$ be a fixed integer.
The natural transformation $\alpha = \alpha_n:\Sigma^n \Omega^n \to C_n$
is defined object-wise as the composition
 \begin{equation*}
	\alpha_X : \Sigma^n \Omega^n X \xrightarrow{\gamma} C_n\left(\Sigma^n \Omega^n X\right) \xrightarrow{C_n\left(\eta_X\right)} C_n\left(X\right),
\end{equation*}
where $\gamma$ 
is the $\mathcal C_n$-coalgebra structure map of $\Sigma^n \Omega^nX$ (Theorem \ref{teo: Iterated suspensions are coalgebras}), 
and $\eta_X $ is the 
evaluation at $X$ of the counit  $\eta: \Sigma^n\Omega^n \to \id_{\mathsf{Top_*}}$ of the $\left(\Sigma^n,\Omega^n\right)$-adjunction.
Unraveling the definitions, we readily see that $\alpha = \alpha_X$
is explicitly given on a point $[t,\ell]\in \Sigma^n \Omega^n X = S^n\wedge \Map_*\left(S^n,X\right)$ as the map $\alpha[t,\ell] : \mathcal C_n(1)\to X$
that acts on a little $n$-cube $c\in \mathcal C_n(1)$ by
$$\alpha[t,\ell](c) =
\begin{cases} \ell\left(c^{-1}(t)\right) &\mbox{if } t\in \mathring{c}\\
*	& \mbox{otherwise} \end{cases}$$
See  Proposition \ref{Prop: Morphism of comonads} for more details on the definition of $\alpha$.

\medskip

The proof of Theorem \ref{teo: Approx Theorem} consists of the following two steps:

\smallskip

$(i)$  We must check that $\alpha$ defines a morphism of comonads. 
	This is not complicated, but it is lengthy.
	Because of this, we postponed this proof to Appendix \ref{sec: Appendix on morphism of comonads} (Proposition \ref{Prop: Morphism of comonads}).
	\medskip

$(ii)$ We must check that for a fixed pointed space $X$, 
the space $\Sigma^n \Omega^n X$ is a deformation retract of $C_n\left(X\right)$ in the category of pointed spaces.
To do so, we give a pointed map (of spaces, not comonads) $\Psi = \Psi_n: C_n\left(X\right) \to \Sigma^n \Omega^n X$ and a homotopy $\mathcal{H}=\mathcal H_n:C_n(X) \times I \to C_n(X)$  such that 
	\begin{equation}
	    \label{ecu: Identities for Psi}
	    \Psi \circ \alpha = \id_{\Sigma^n \Omega^n X} \quad \textrm{ and }\quad  \alpha \circ \Psi \simeq \operatorname{id}_{C_n\left(X\right)}.
	\end{equation}
To define $\Psi$ and the homotopy $\mathcal H :\alpha\circ \Psi \simeq \operatorname{id}_{C_n\left(X\right)}$, 
we introduce for each $f\in C_n\left(X\right)$ a certain subset of the $n$-cube $I^n$ 
which we name the \emph{cubical support of $f$} and denote  $\operatorname{CSupp}\left(f\right)$.
The cubical support of a map $f$ has a well-defined \emph{center},
which is a point 
\[
\operatorname{Cent}\left(f\right) \in  \operatorname{CSupp}\left(f\right) \subseteq I^n.
\]
The cubical support and its center will play an important role in the deformation retract.

\smallskip

Theorem \ref{teo: Approx Theorem} will then follow from the two items just described.
Since the first item is proved in the appendix, it remains to prove the second one.
To do so, we give the details of the auxiliary construction of the cubical support and its center 
in Section \ref{sec: Center and cubical support},
and then prove the assertions of item $(ii)$ in Section \ref{sec: proof of item (ii)}.

\subsection{The cubical support of a map and its center}
\label{sec: Center and cubical support}
For each pointed space $X = (X,*)$, 
the cubical support is a map 
\[
\operatorname{CSupp}:  C_n\left(X\right)\to \overline{\mathcal C_n\left(1\right)}
\]
defined on the complement of the constant map, denoted by $*$, and where $\overline{\mathcal C_n\left(1\right)}$
is the topological closure of $\mathcal C_n\left(1\right)$.
This closure is built by attaching the limits of shrinking cubes, which are rectangles that have been squeezed in some dimensions.
In particular, these limits can be singletons. 
See Figure \ref{fig: squeezed cubes}, where we represent four different elements of $\overline{\mathcal C_2(1)}$.
The space $\overline{\mathcal C_n\left(1\right)}$ is compact.
The map $\operatorname{CSupp}$ is crucial to this paper, playing a fundamental role in Theorem \ref{teo Intro 2}. 
We shall show a quotient of it is continuous, 
\[
\overline{\operatorname{CSupp}}:  C_n\left(X\right)/ L(X)\to \overline{\mathcal C_n\left(1\right)}/\overline{A}.
\]
The construction of the subspaces $L(X)$ and $\overline{A}$ is technical and will be explained along the section.
We carefully define this map and check its continuity. 
Then, we compute some examples and prove some necessary technical results.

\begin{figure}[h!]\centering
	\includegraphics[scale=0.3]{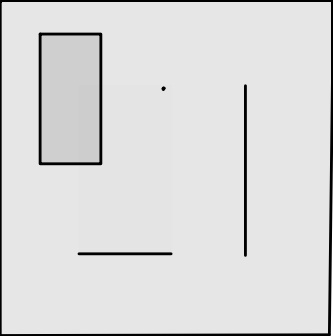}
        \caption{{\footnotesize{A little 2-cube, a singleton, and two little 2-cubes  squeezed at different dimensions in $\overline{\mathcal C_2(1)}$.}}}
        \label{fig: squeezed cubes}
\end{figure}

We first lay out some notation and technical results needed to define the cubical support maps.
Recall that any pointed space $\left(X, \ast\right)$ in our category of spaces is a neighbourhood deformation retract 
(NDR), see \cite[Appendix A]{May72}. 
This means there is a continuous function $u: X\to [0,1]$ such that $u^{-1}\left(\{0\}\right) = \{\ast\}$,
and a homotopy $h:X\times I \to X$  that retracts $u^{-1}\left([0,1)\right)$ onto $\{\ast\}$. 
In particular, the unit interval $I = (I,0)$ pointed at 0 is a NDR. 
The cubical support function on the interval will play a prominent role,
since the proof of the continuity of the cubical support map for a general space will be reduced to the continuity of 
the cubical support map for the interval. 
To that end,
we fix once and for all 
the identity map $v:I\to I$ and the homotopy $K:I\times I \to I$ given by $K(x,t) = (1-t)x$ as NDR maps on the interval.

\smallskip

We define the following maps on $\mathcal C_n\left(1\right)$ and on its compactification. 
\begin{itemize}
    \item The \emph{radius} map $\operatorname{rad}: \overline{\mathcal C_n\left(1\right)} \to [0,1]$, 
    whose value at a (possibly squeezed) little $n$-cube $c$ is the smallest side length of $\operatorname{Im}(c)$,
    that is, $\operatorname{rad}(c) = \min_i\left\{y_i- x_i\right\}$.
    The restriction of this map to $\mathcal C_n\left(1\right)$ is denoted in the same way, and it is a non-zero function.
    That is, $\operatorname{rad}(c)> 0$ for all little $n$-cubes $c\in \mathcal C_n(1)$.
    This is because $c$ is a linear embedding with non-empty interior at each of its coordinates.

\item The \emph{distance to the boundary} map $d_b: \overline{\mathcal C_n\left(1\right)} \to [0,1]$ 
on each (possibly squeezed) little $n$-cube $c$ as the smallest distance 
between any edge of $\operatorname{Im}(c)$ and any edge of $I^n$, that is, $d_b(c) = \min\left\{x_i, 1-y_i\right\}$.

\item The \emph{opposite side distance} map $d: \mathcal C_n\left(1\right) \to (0,1]$ by the formula $d=\operatorname{rad} + d_b$.
Since $\operatorname{rad}(c)>0$ for every little $n$-cube $c$, it follows that $d(c) > 0$ for every little $n$-cube $c$ too.
On the other hand, $d_b(c)$ can be at most $1-\operatorname{rad}(c)$ by definition.
Therefore, 
\[
d(c) = \operatorname{rad}(c) + d_b(c)  \leq \operatorname{rad}(c) + \left(1-\operatorname{rad}(c)\right) = 1.
\]
We shall denote by $d'$ the extension of this function to the
compactification by the same formula, $d':\overline{\mathcal C_n\left(1\right)}\to [0,1]$.
Although $d$ is a non-zero function, the extension $d'$ vanishes on cubes that have been shrunk to a point
or on little cubes lying on the boundary that have been squeezed in some of its dimensions.

\item The map $q: \mathcal C_n\left(1\right) \to (0,1]$ given on each little $n$-cube $c$ as the ratio
\[
q(c) = \frac{\operatorname{rad}(c)}{d(c)}.
\]
Since $d(c)>0$, the quotient is well-defined. 
Furthermore, as $\operatorname{rad}$ is a positive function on $\mathcal C_n(1)$ 
and $d=\operatorname{rad}+d_b$ with $d_b$ non-negative, it follows that $q(c)$ takes values in $(0,1]$.
\end{itemize}

The following two subsets will play an important role in our proof. 
First, the set 
\[
A = \left\{c\in  \mathcal C_n\left(1\right)\ \mid\ \operatorname{rad}(c) = d(c) \right\} 
\]
of the little $n$-cubes where at least one edge touches some side of the large ambient cube,
that is, cubes $c$ such that $\operatorname{Im}(c) \cap \partial I^n \neq \emptyset$.
The expression for $A$ above occurs because $\operatorname{rad}(c) = d(c)$ if, and only if, $d_b(c)=0$,
and we have chosen it because it will be useful later on to express this fact using the maps $\operatorname{rad}$ and $d$.
Second, the set
\[
\overline{A} = \left\{c\in  \overline{\mathcal C_n\left(1\right)}\ \mid\ \operatorname{rad}(c) = d'(c) \right\}
\]
of the (possibly squeezed) little $n$-cubes in the closure with the same property, i.e., 
such that at least one edge touches (or is contained in) some side of the large ambient cube $I^n$.

Remark that $q(c) = 1$ is equivalent to $c \in A$,
and that $\operatorname{rad}, d,$ and $q$ are non-negative functions, while $d'$ is not.
This will be used in the sequel. 
\smallskip

The space $\mathcal C_n\left(1\right)$ is contractible.
Fix $H: \mathcal C_n\left(1\right)\times I \to \mathcal C_n\left(1\right)$ to be any choice of homotopy 
such that $H\left(-, 0\right)$ is the identity map,
$H\left(-, 1\right)$ is the constant map sending everything to $\id \in \mathcal C_n\left(1\right)$,
and furthermore $\operatorname{Im}(H(c, t))\subseteq \operatorname{Im}(H(c, s))$ if $t<s$.
Finally, we fix a \textit{compactification function} $G: \mathcal C_n\left(1\right) \to \mathcal C_n\left(1\right)$, 
which is defined as any continuous map such that:
\begin{enumerate}
    \item 
    The map $G$ is the identity when restricted to $A$.

    \item For each sequence of cubes outside of $A$ on which the function $q$ tends to zero,
    the image of the map $G$ on this sequence tends to the identity cube.

    \item There is an inclusion on images $\operatorname{Im}(c)\subseteq \operatorname{Im}\left(G(c)\right)$ 
    for every little $n$-cube $c\in \mathcal C_n\left(1\right)$.
\end{enumerate}
The geometric interpretation of the three conditions above are explained in Appendix \ref{sec: The explicit description of the map G}.
There, we further construct an explicit map $G$. 
The precise details of this construction are irrelevant to the remainder of this section,
as only the abstract properties of $G$ given above are needed to define the cubical support map.

\smallskip

The \emph{cubical support} of any map  $f:\mathcal C_n\left(1\right)\to X$ is defined as
\begin{equation}
\label{ecu: New cubical support}
	\operatorname{CSupp}\left(f\right) 
 = \bigcap_{\substack{c\in \mathcal C_n\left(1\right)}} 
 \operatorname{Im}\left( H\left( G(c), 1 - \max \left( u\left( f\left( G(c) \right) \right), q(c) \right) \right) \right)
\subseteq I^n.
\end{equation} 
Since $\operatorname{CSupp}\left(f\right)$ is defined as the intersection of the images of a family of cubes,
it may degenerate to a singleton or to a rectangle with at least one side of length zero; 
and thus it can be canonically identified with an element of $\overline{\mathcal C_n(1)}$, see Proposition~\ref{claim}.
For our purposes, it suffices to consider the cubical support of maps in the comonad $C_n(X)$. 
Consequently, the natural codomain of the (naive) cubical support map
is the topological closure $\overline{\mathcal C_n(1)}$ rather than $\mathcal C_n(1)$,
\begin{equation*}
    \operatorname{CSupp}:  C_n\left(X\right)\to \overline{\mathcal C_n\left(1\right)}.
\end{equation*}
Our next goal is to prove the continuity of a quotient of this map, which we define as the (genuine) cubical support map,
\[
\overline{\operatorname{CSupp}}: C_n\left(X\right)/ L(X)
\xrightarrow{} \overline{\mathcal C_n\left(1\right)}/ \overline{A}.
\]
The set $\overline{A}$ has already been defined. 
The subset of the domain we will quotient by is 
\[
L(X) = \left\{f \in C_n(X) \mid  \operatorname{CSupp}\left(f\right) \in \overline{A}\right\} \cup \left\{*\right\},
\]
where $*$ denotes the constant map. 
The first step to prove the continuity of the genuine cubical support map is contained in the following observation.
There, we use a subindex in the naive cubical support function because there are two different spaces involved.

\begin{lemma}
\label{lemma: Cubical support factors}
    For any pointed space $(X,*)$ and choice of NDR map $u:X\to I$, 
    the naive cubical support function on $X$ factors through the cubical support function on the interval $I$.
    That is, there is a commutative diagram 
    \[
    \begin{tikzcd}
    C_n\left(X\right) \arrow[rd, "\operatorname{CSupp}_X", swap] \rar{C_n\left(u\right)}&   C_n(I)
    \arrow[d, "\operatorname{CSupp}_I"] \\
     \ &  \overline{\mathcal C_n\left(1\right)}.
\end{tikzcd}
    \]
Therefore, the map $\operatorname{CSupp}_X$ is continuous if, and only if, 
the map $\operatorname{CSupp}_I$ is continuous.
\end{lemma}

\begin{proof}
Recall from Prop. \ref{Prop:Characterization of C(X) as a subspace of Map} that $C_n\left(u\right)\left(f\right) = u \circ f$.
With the choice made of NDR maps for the interval, 
the factorization follows at once, because for every little $n$-cube $c$,
we have 
\[
u\left(f\left(G(c)\right)\right) = v\left(u\circ f\left(G(c)\right)\right). \vspace{-5mm}
\]
\end{proof}
By definition, $\operatorname{CSupp}\left(L(X)\right)\subseteq  \overline{A}$.
Therefore, the diagram in Lemma \ref{lemma: Cubical support factors} above factors to produce a diagram 
    \[
    \begin{tikzcd}
    C_n\left(X\right)/ L(X) \arrow[rd, "\overline{\operatorname{CSupp}}_X", swap] \rar{C_n\left(u\right)}&   C_n(I)/ L(I)
    \arrow[d, "\overline{\operatorname{CSupp}}_I"] \\
     \ &  \overline{\mathcal C_n\left(1\right)} / \overline{A}.
\end{tikzcd}
    \]    
We define the \emph{cubical support map} of the pointed space $X$ as the quotient map above,
\[
\overline{\operatorname{CSupp}}: C_n\left(X\right) / L(X)
\xrightarrow{} \overline{\mathcal C_n\left(1\right)}/ \overline{A}.
\]
We are ready to prove the continuity of this map.

\begin{proposition}
\label{prop: The cubical support is continuous}
For every pointed space $X$, the cubical support map $\overline{\operatorname{CSupp}}$ is continuous.
\end{proposition}

\begin{proof}
As in the Lemma \ref{lemma: Cubical support factors}, the map $\overline{\operatorname{CSupp}}_X$ is continuous if, and only if, 
the map $\overline{\operatorname{CSupp}}_I$ is continuous.
Thus, it suffices to prove the case of $X = I$.
Recall that
the space $\overline{\mathcal C_n\left(1\right)}$ admits a metric induced by that on $I^n$, see Equation \eqref{ecu: distance on C_1(1)} 
for the 1-dimensional case.  
Choose any $f \in C_n(I)$ such that $\operatorname{CSupp}\left(f\right) \notin \overline{A}$. To prove continuity, 
we will show that for all $\epsilon > 0$ there exists an open set $U \subseteq C_n(I)$ such that $f\in U$ and 
$\operatorname{CSupp} \left(U\right) \subseteq B_{\epsilon}\left(\operatorname{CSupp}\left(f\right)\right)$,
where $B_{\epsilon}(-)$ is an open ball of radius $\epsilon$. 
Define $U$ as follows.
From the definition of $G$, there exists $\delta$ such that, 
for any cube $c$ such that $q(c)< \delta $, we have $G(c) \in B_{\epsilon}\left(\id\right).$ 
Therefore, 
$H\left(G(c), 1- \operatorname{max}\left( u\left(f\left(G(c)\right), q(c) \right)\right)\right) \in B_{\epsilon}\left(\id\right)$.
Now, define $\varepsilon = \min(\delta, d_b(\operatorname{CSupp}\left(f\right)))$. 
This is nonzero because, by our prior assumption, $\operatorname{CSupp}\left(f\right) \notin \overline{A}$. 
Let $\overline{\mathcal C_n\left(1\right)}_{\varepsilon}\subseteq \overline{\mathcal C_n\left(1\right)}$ 
be the subspace formed by cubes $c$ such that $ q(c)\geq \varepsilon$ 
and such that $d_b(\operatorname{CSupp}\left(f\right)) \geq \epsilon$. 
This subspace is compact.
Since $f$ is continuous,  there is a collection of open sets 
\[
\left\{ U_c: f\left(U_c\right) \subseteq B_{\epsilon}\left(f(c)\right)\right\}_{c\in \overline{\mathcal C_n\left(1\right)_{\varepsilon}}}.
\]
This is an open cover of $\overline{\mathcal C_n\left(1\right)}_{\varepsilon}$,
and therefore it admits a finite subcover $\left\{U_{c_i}\right\}_{i=0}^N$. 
Let $K_{c_i} = \overline{U_{c_i}}$ be the closure of $U_{c_i}$. 
This produces a collection of compact sets $\left\{K_{c_i}\right\}_{i=0}^N$.
Define
$$U := \bigcap_{i = 0}^N \Map\left(K_{c_i}, \left(\operatorname{max}\left(0, f(c_i) - \frac{\epsilon}{2}\right), \operatorname{min}\left(f(c_i)+ \frac{\epsilon}{2}, 1 \right)\right)\right).$$
Let $g\in U.$ Then 
$$\operatorname{CSupp} \left(g\right) 
=  \bigcap_{\substack{c\in \mathcal C_n\left(1\right)}} 
\operatorname{Im}\left(H\left(G(c), 1 - \max\left(u\left(g\left(G(c)\right)\right), q(c)\right)\right)\right).$$
By construction, 
$g$ differs from $f$ at most $\epsilon$ at each coordinate. 
Being precise, for each cube $c\in \mathcal C_n(1)$,
the rectangles 
\[
\operatorname{Im}\left(H\left(G(c), 1 - \max\left(u\left(g\left(G(c)\right)\right), q(c)\right)\right)\right) 
\quad \textrm{and} \quad 
\operatorname{Im}\left(H\left(G(c), 1 - \max\left(u\left(f\left(G(c)\right)\right), q(c)\right)\right)\right)
\]
differ by at most $\epsilon$ at each coordinate. 
This means that each factor of the intersection above 
belongs to the open ball $B_{\epsilon}\left(\operatorname{CSupp}\left(f\right)\right)$.
Therefore, the intersection of all of them belongs to the same open ball, 
because the distance at each coordinate is bounded by the distance of any factor of the intersection,
which is always strictly smaller than $\epsilon$.
This proves that 
$\operatorname{CSupp} \left(g\right) \in B_{\epsilon}\left(\operatorname{CSupp}\left(f\right)\right)$, as we wanted to show. 
\end{proof}

Our next goal is to calculate some explicit examples and prove some technical results about the 
cubical support. 
The following lemma will make these calculations easier.

\begin{lemma} 
\label{lemma: old cubical support contained in new cubical support}
Let $X$ be a pointed space.
For each map $f\in C_n(X)$ such that $\operatorname{CSupp}\left(f\right)\notin L(X)$, there is an inclusion of sets 
\[
\bigcap_{\substack{c\in \mathcal C_n(1) \\ f(c)\neq *}} \operatorname{Im}(c) \subseteq \operatorname{CSupp}\left(f\right).
\]
\end{lemma}

\begin{proof}
    The intersection $\bigcap_{\substack{c\in \mathcal C_n(1) \\ f(c)\neq *}}\operatorname{Im}(c)$ can be written as 
    $
    \bigcap_{c\in \mathcal C_n(1)} D_c,
    $
    where 
    $$
    D_c = \begin{cases}
        \operatorname{Im}(c) & \mbox{if } f(c)\neq *
        \\
        I^n & \mbox{if } f(c) = *.
    \end{cases}
    $$
    The cubical support $\operatorname{CSupp}\left(f\right)$ is also defined as an intersection over all 
    $c\in \mathcal C_n(1)$,
    with each factor given 
    as $\operatorname{Im}\left( H\left( G(c), 1 - u\left( f\left( G(c) \right) \right) \right) \right).$ 
    Moreover, one has
    $$
    D_c \subseteq \operatorname{Im}\left( H\left( G(c), 1 - \max\left(u\left(f\left(G(c)\right)\right), q(c)\right) \right) \right),
    $$
    as $\operatorname{Im} (c) \subseteq \operatorname{Im} \left(G(c)\right)$ and $\operatorname{Im} (c) \subseteq \operatorname{Im} (H(c,t))$ 
    for all $t\in I$. 
    The conclusion follows.
\end{proof}

From now on, 
denote 
\[
\operatorname{CSupp}'\left(f\right) =\bigcap_{\substack{f\in C_n(X) \\ c\in \mathcal C_n(1) \\ f(c)\neq *}} \operatorname{Im}(c).
\]
By the lemma before, 
$\operatorname{CSupp}'\left(f\right) \subseteq \operatorname{CSupp}\left(f\right)$.
This inclusion is an equality in some situations, but there are exceptions
(see Examples \ref{exa: cubical support computations}). 
The reason to introduce the set $\operatorname{CSupp}'\left(f\right)$ is that it is easier to compute in practice than
$\operatorname{CSupp}\left(f\right)$.
It is also worth noting that,
whenever $\operatorname{CSupp}\left(f\right)$
is a singleton, the set $\operatorname{CSupp}'\left(f\right)$ is so as well.

\smallskip

The following observation is essential.
Recall that $A\subseteq \mathcal C_n(1)$ is the subset of little $n$-cubes $c$ whose image $\operatorname{Im}(c)$
intersects the boundary $\partial I^n$, and that $G:\mathcal C_n(1)\to \mathcal C_n(1)$ is a fixed compactification function.

\begin{proposition}
For each map $f\in C_n(X)$ such that $\operatorname{CSupp}\left(f\right)\notin L(X)$,
there is an inclusion
\[
\operatorname{CSupp}\left(f\right) \subseteq \bigcap_{\substack{c\in A \\ f(c)\neq *}} 
\operatorname{Im}(c).
\]
\end{proposition}

\begin{proof}
For each little $n$-cube $c$, we have:
\[
\operatorname{Im}(c) \subseteq \operatorname{Im}\left(G(c)\right) \subseteq
\operatorname{Im} \left(H\left(G(c),t \right)\right)  \quad \text{for all } t.
\]
Hence,
\[
\operatorname{Im}(c) \subseteq \operatorname{Im}
\left( H\left( G(c), 1 - \max\left( u\left(f\left(G(c)\right)\right),\ q(c) \right) \right) \right).
\]
Therefore,
\[
\begin{aligned}
\operatorname{CSupp}\left(f\right) 
&= \bigcap_{\substack{c \in \mathcal{C}_n(1)\\ f(c)\neq *}} 
\operatorname{Im} \left( H\left( G(c), 1 - \max\left( u\left(f\left(G(c)\right)\right),\ q(c) \right) \right) \right) \\
&\subseteq \bigcap_{\substack{c \in A \\ f(c) \neq * }}
\operatorname{Im} \left( H\left( G(c), 1 - \max\left( u\left(f\left(G(c)\right)\right),\ q(c) \right) \right) \right) \\
&= \bigcap_{\substack{c \in A \\ f(c) \neq * }}
\operatorname{Im}H\left( c, 1 - \max\left( f(u(c)),\ q(c) \right) \right).
\end{aligned}
\]
The final inclusion follows from the fact $G(c) = c$ for $c\in A.$ Finally, as $q(c) = 1$ for $c\in A,$ we have 
\[
= \bigcap_{\substack{c \in A \\ f(c) \neq * }}
\operatorname{Im} \left(H\left( c, 1 - \max\left( u\left(f\left(c\right)\right),1 \right) \right)\right) 
= \bigcap_{\substack{c \in A \\ f(c) \neq * }}
\operatorname{Im}\left(H\left( c, 0 \right)\right) =  \bigcap_{\substack{c \in A \\ f(c) \neq * }} \operatorname{Im}(c). \vspace{-2em}
\] 
\end{proof}

Our next goal is to be more explicit about the shape of $\operatorname{CSupp}'\left(f\right)$,
and hence of $\operatorname{CSupp}\left(f\right)$.
Recall that 
an \emph{$n$-rectangle} is a subspace of $\mathbb{R}^n$ which is rectilinearly homeomorphic to $I^n$ or a singleton.
An $n$-rectangle that does not reduce to a single point is determined by the set of its $2^n$ vertices, 
but also more efficiently by $2n$ numbers that describe the length of the sides and their position.
In other words, an $n$-rectangle $R$ is simply a cartesian product of closed intervals:
$$R= \left\{\left(x_1,...,x_n\right)\in \mathbb{R}^n \mid a_i \leq x_i \leq b_i \textrm{ for all } i=1,...,n\right\}=\left[a_1,b_1\right]\times \cdots \times \left[a_n,b_n\right],$$
for certain $a_i,b_i\in \mathbb{R}$ satisfying $a_i\leq b_i$.

\begin{proposition} 
\label{claim}
Let $f\in C_n\left(X\right)$.
The set $\operatorname{CSupp}'\left(f\right)$ is nonempty and an $n$-rectangle.
\end{proposition}

\begin{proof}

We start by proving the furthermore assertion, since we will explicitly use the description 
of $\operatorname{CSupp}'\left(f\right)$ to prove the first claim.
To do so,
let $c\in \mathcal C_n(1)$ be a little $n$-cube, 
and write $c = \left(g_1,...,g_n\right)$ in terms of its coordinate functions $g_i : I \to I$.
Then, the image of the cube $c$ is the $n$-rectangle
$$\operatorname{Im}(c) = \left[g_1(0),g_1(1)\right] \times \cdots \times \left[g_n(0),g_n(1)\right] \subseteq I^n.$$
There is an identification between little $n$-cubes and $n$-rectangles contained in $I^n$ that do not either reduce to a single point or alternatively have at least one side-length equal to 0. 
For a fixed map $f:\mathcal C_n(1)\to X$, we have the $n$-rectangle
$$\operatorname{CSupp}'\left(f\right) = \left[a_1,b_1\right] \times \cdots \times \left[a_n,b_n\right],$$
where for each $i=1,...,n$
\begin{align*}
	a_i &:= \sup \left\{g_i(0) \mid c = \left(g_1,...,g_n\right)\in \mathcal C_n(1) \textrm{ and } f(c)\neq *\right\},\\[0.2cm]
		b_i &:= \inf \ \  \left\{g_i(1) \mid c = \left(g_1,...,g_n\right)\in \mathcal C_n(1) \textrm{ and } f(c)\neq *\right\}.
\end{align*}
Now, we proceed to prove the main characterization in the proposition.
Let $f\in C_n(X)$ be any map.
If $\operatorname{CSupp}'\left(f\right) \neq \emptyset$, then obviously $f\neq *$.
Let us check the converse.
Assume therefore that $f\neq *$, and let us check that $\operatorname{CSupp}'\left(f\right) \neq \emptyset$. 
By the explanation given before of how the cubical support is constructed, 
we have at each coordinate $i$ that $\sup_j a^j_i \leq \inf_j b^j_i$ for all $j=1,...n$.
Therefore, there is a point $t=(t_1,...,t_n)$ such that $a^j_i \leq t_i \leq  b^j_i$  for all $j=1,...n$.
This finishes the proof.
\end{proof}

Next, we define the \emph{(rectangular) center map} 
$$
\operatorname{rCent}: \overline{\mathcal C_n(1)} \backslash   \overline{B} \to I^n/\partial I^n,
$$
where 
\begin{equation}
    \label{ecu: Subspace Q}
    \overline{B} = \left\{\left[a_1,b_1\right]\times \cdots \times \left[a_n,b_n\right] \in \overline{\mathcal C_n(1)}
\ \mid \ \exists i \textrm{ with } [a_i,b_i] = [0,1]\right\}. 
\end{equation}
Explicitly, the \emph{center} $\operatorname{rCent}(R)$ is the point 
\begin{equation}
    \label{ecu: rcenter}
    \operatorname{rCent}(R) 
= \left(a_1 + \frac{2}{\pi}\tan^{-1}\left( \frac{a_1}{1-b_1} \right)(b_1-a_1),...,a_n +\frac{2}{\pi}\tan^{-1}\left( \frac{a_n}{1-b_n} \right)(b_n-a_n) \right).
\end{equation}
Here, we adopt the convention of defining $\tan^{-1}\left(\frac{a_i}{0}\right) = \frac{\pi}{2}$ for $a_i\neq 0$.
We define the cubical center of such points to be $\ast$.
In particular,
if 
$R=(x_1,...,x_n)$ is a singleton, 
then $\operatorname{rCent}(R) = (x_1,...,x_n)$.
Assuming furthermore that  $R = \operatorname{CSupp}\left(f\right)$ for some $f$, 
then we define $\operatorname{Cent}\left(f\right),$ the center of $f$, as 
$$\operatorname{Cent}\left(f\right) := \operatorname{rCent}\left(\operatorname{CSupp}\left(f\right)\right)=\operatorname{rCent}(R).$$

\begin{remark}
\label{rem:discontinuity}
The map $\operatorname{rCent}$ is continuous on $\overline{\mathcal C_n(1)} \backslash  \overline{B}$,
but its extension to all of $\overline{\mathcal C_n(1)}$ by the same formula will not generally 
be continuous at $\overline{B}$ itself.
This turns out not to matter to prove the continuity of the map $\Psi : C_n\left(X\right) \to \Sigma^n \Omega^n X$ in Proposition
\ref{prop:psi}, for which $\operatorname{rCent}$ is an auxiliary map.
This is because, if $\overline{\operatorname{CSupp}}\left(f\right) \in \overline{B},$
then $\Psi\left(f\right)$ is the base loop in $\Omega^n X$, 
which is quotiented in the smash product in Proposition \ref{prop:psi}.
\end{remark}

\begin{examples}
\label{exa: cubical support computations}
Let us compute $\operatorname{CSupp}'\left(f\right)$ in several cases. 
	\begin{enumerate}
		
		\item Let $C_n(*)$ be the cofree counital $C_n$-coalgebra on a single point. 
		Then, $C_n\left(*\right)=*$  reduces to the trivial one-point space. 
		Thus, the unique map $f:\mathcal C_n(1)\to *$ collapses all little $n$-cubes to the base point, 
		and therefore, $\operatorname{CSupp}'\left(f\right) = \emptyset$.
            On the other extreme, 
            $\operatorname{CSupp}\left(f\right) = I^n$ is the unit $n$-cube.
		
		\item 
		Consider the map $f: \mathcal C_1(1)\to I$ given by 
		$$f(c) = 
		\begin{cases} 0 &\mbox{if } r \leq 1/2 \\
			r-1/2 & \mbox{if } r \geq 1/2 \end{cases}$$
		Here, $r = c(1)-c(0)$ is the size of the little $1$-cube $c$. 
		By Proposition \ref{Prop: Geometric characterization of C_n}, 
		$f$ defines an element in $C_1\left(I\right)$,
		and one readily checks that $\operatorname{Cent}\left(f\right) 
        = \operatorname{CSupp}'\left(f\right) = \left\{\frac{1}{2}\right\}$.
		
		\item Define $f: \mathcal C_1(1)\to I$ as in the example above replacing $1/2$ by any real number $a \in \left[\frac{1}{2},1\right).$  
		By Proposition \ref{Prop: Geometric characterization of C_n},
		$f$ defines a map in $C_1\left(I\right)$.
		It can be seen that $\operatorname{CSupp}'\left(f\right)=[1-a,a]$.
	\end{enumerate}
The examples above can be generalized to higher-dimensional cubes.
\end{examples}

An important example of cubical support is that of $n$-fold suspensions.


\begin{proposition}
\label{prop: suspensions have a single point cubical support}
Let $\Sigma^nX$ be the $n$-fold reduced suspension of a pointed space $X$,
and let $\gamma: \Sigma^nX \to C_n\left(\Sigma^nX\right)$ be its $\mathcal C_n$-coalgebra structure map.
Then, for every non-base point $[t,x] \in \Sigma^n X$, we have that 
\[
\operatorname{CSupp}\left(\gamma[t,x]\right) 
= \left\{ t\right\}.
\]
\end{proposition}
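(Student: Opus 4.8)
The plan is to reduce the statement to the explicit arity-one description of the coalgebra structure map and then compute the cubical support directly from its definition. First I would unravel $\gamma[t,x]$ as an element of $C_n(\Sigma^n X)\subseteq \operatorname{Map}(\mathcal C_n(1),\Sigma^n X)$. By Proposition \ref{prop: Equivalent categories of coalgebras} (together with Lemma \ref{Lemma:Determined by 1st comp} and Proposition \ref{Prop: Geometric characterization of C_n}, which identify an element of $C_n(\Sigma^n X)$ with its arity-one component), the comonadic structure map $\gamma$ is determined by the operad map $\nabla$ via $\gamma[t,x](c)=\nabla_1(c)([t,x])$ for $c\in\mathcal C_n(1)$. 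Combining the explicit formula for the arity-one map $\nabla_1(c)\colon S^n\to S^n$ of Proposition \ref{prop: Spheres} with the smash construction in the proof of Theorem \ref{teo: Iterated suspensions are coalgebras}, namely $\nabla_1(c)\wedge\operatorname{id}_X$, I obtain
\[
\gamma[t,x](c)=
\begin{cases}
[\,c^{-1}(t),x\,] & \text{if } t\in\mathring c,\\
* & \text{otherwise.}
\end{cases}
\]

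Next I would pin down exactly the little cubes on which $\gamma[t,x]$ is non-trivial. Because $[t,x]$ is a \emph{non-base} point of $S^n\wedge X$, both $t\in J^n$ and $x\neq *$. When $t\in\mathring c$ the preimage $c^{-1}(t)$ lies in $J^n$, so $[c^{-1}(t)]\neq *$ in $S^n$, and hence $[c^{-1}(t),x]\neq *$ since $x\neq *$. Therefore $\gamma[t,x](c)\neq *$ if and only if $t\in\mathring c$, and feeding this into the definition of the cubical support yields
\[
\operatorname{CSupp}\!\left(\gamma[t,x]\right)=\bigcap_{\substack{c\in\mathcal C_n(1)\\ t\in\mathring c}}\operatorname{Im}(c).
\]

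Finally I would show this intersection equals $\{t\}$. The inclusion $t\in\operatorname{CSupp}(\gamma[t,x])$ is immediate, since $\mathring c\subseteq\operatorname{Im}(c)$ whenever $t\in\mathring c$. For the reverse inclusion, given $s\neq t$ I would exhibit a single little cube $c$ with $t\in\mathring c$ but $s\notin\operatorname{Im}(c)$: choose a coordinate $j$ with $s_j\neq t_j$, and using $0<t_i<1$ pick a non-degenerate $n$-rectangle $R=\prod_i[a_i,b_i]\subseteq I^n$ with $a_i<t_i<b_i$ for all $i$ and with $[a_j,b_j]$ arranged (shrinking on the side of $s_j$) so that $s_j\notin[a_j,b_j]$. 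By the canonical identification of non-degenerate rectangles in $I^n$ with little $n$-cubes, $R=\operatorname{Im}(c)$ for some $c\in\mathcal C_n(1)$ with $t\in\mathring c$, while $s\notin R$; hence $s\notin\operatorname{CSupp}(\gamma[t,x])$.

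The only genuinely delicate point — the main obstacle — is the first step: pinning down the explicit arity-one formula for $\gamma[t,x]$ and tracking the base-point bookkeeping through the smash product $S^n\wedge X$, so as to be certain that $\gamma[t,x](c)$ is non-trivial \emph{exactly} when $t\in\mathring c$ (it is precisely the hypothesis $x\neq *$ that prevents the value from collapsing to the base point). Once this formula is secured, the remainder is the elementary rectangle-shrinking argument. I note that this computation also re-proves, in this special case, the dichotomy of Claim 1: here the cubical support degenerates to a single point precisely because arbitrarily small little cubes around $t$ are available.
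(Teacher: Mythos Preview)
Your proposal is correct and follows essentially the same approach as the paper: unravel the arity-one formula $\gamma[t,x](c)=[c^{-1}(t),x]$ when $t\in\mathring c$ and $*$ otherwise, then observe that the intersection of all cube-images whose interiors contain $t$ is $\{t\}$. The paper organizes this by treating $S^n$ first and then smashing with $X$, whereas you go directly via the coendomorphism description; you are also more explicit than the paper about the base-point bookkeeping (that $x\neq *$ and $c^{-1}(t)\in J^n$ force $[c^{-1}(t),x]\neq *$) and about the separating-rectangle argument for the reverse inclusion.
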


\begin{proof}
First, 
we prove the result  for spheres.
If $\gamma : S^n \to C_n\left(S^n\right)$ is the $\mathcal C_n$-coalgebra structure map, 
we explicitly have
\begin{equation*}
	\gamma(t)(c) =
	\begin{cases} c^{-1}(t) &\mbox{if } t\in \mathring c\\
		*	& \mbox{otherwise,} \end{cases}
\end{equation*}
where $t\in S^n$ and we identify $S^n$ with $I^n/\partial I^n$, the ambient cube of $c$ modulo its boundary. 
As observed earlier,  
$\operatorname{CSupp}\left(\gamma(t)\right)$
is, at most, the intersection of  the family 
$$
\operatorname{CSupp}\left(f\right) \subseteq \bigcap_{\substack{c\in A \\ f(c)\neq *}} \operatorname{Im} c.
$$
The image $\operatorname{Im}(c)$ of a little $n$-cube is non-trivial if, and only if, $t\in \operatorname{Im}(c)$.
Thus, the cubical support $\operatorname{CSupp}\left(\gamma(t)\right)$ is, 
at most, the intersection of all non-trivial cubes containing $t$ where $\operatorname{Im}(c)$ \emph{of maximal radius}. 
Since it is nonempty, it is precisely $\{t\}$.

Now, for an arbitrary $n$-fold reduced suspension $\Sigma^n X$, factorize its coalgebra structure map as follows:
\begin{center}
	\begin{tikzcd}
		\Sigma^nX = S^n \wedge X \arrow[r, "\ \ \gamma_{S^n}\wedge \id_X \ \  "] & C_n\left(S^n\right)\wedge X \arrow[r, "F"] & C_n\left(S^n\wedge X\right).
	\end{tikzcd}
\end{center}
The second map $F$ above is given by
$$
F\left(f,x\right) = \left[f(-),x\right], \quad \textrm{ for } f : \mathcal C_n(1)\to S^n \textrm{ and } x\in X.
$$ 
The final composition is therefore explicitly given by 
\begin{center}
	\begin{tikzcd}[row sep=small]
		{\gamma[t,x] : \mathcal C_n(1)} \arrow[r] & S^n \wedge X                  \\
		c \arrow[r, maps to]                                   & {\left[\gamma(t)(c),x\right]}.
	\end{tikzcd}
\end{center}
Here, the cubical support $\operatorname{CSupp}\left(\gamma[t,x]\right)$ is, at most,  the intersection of  the family 
\[
\left\{\operatorname{Im}(c) \mid c\in  \mathcal C_n(1) \ \textrm{and } \ 
\left[\gamma(t)(c),x\right]\neq *\ \textrm{and } \operatorname{rad}(c) = r_0\right\}.
\]
Similar to the case of the spheres, we have 
\begin{equation*}
	\left[\gamma(t)(c),x\right] =
	\begin{cases} \left[c^{-1}(t),x\right] &\mbox{if } t\in \mathring c\\
		*	& \mbox{otherwise} \end{cases}
\end{equation*}
We readily see from here that a little $n$-cube $c$ has non-trivial image if, 
and only if, 
$\mathring{c}$ contains the component $t$ of the sphere.
Thus, the intersection of them all yields the singleton $\{t\}$.  
\end{proof}

\subsection{Proof of Theorem \ref{teo: Approx Theorem}}
\label{sec: proof of item (ii)}

In this section, we prove the remaining details of Theorem \ref{teo: Approx Theorem}.
This includes the definitions of the maps, their continuity, and the identities involved in the deformation retract.

\medskip

\noindent \fbox{\strut \ Definition of $\Psi$ \ }

\medskip

\noindent The pointed map $\Psi$ is defined as follows: 
\begin{center}
	\begin{tikzcd}[row sep=small]
		\Psi : C_n\left(X\right) \arrow[r] & \Sigma^n \Omega^n X \\
		f \arrow[r, maps to]    & {\Psi\left(f\right)=\left[\operatorname{Cent}\left(f\right),\ell\right]}.
	\end{tikzcd}
\end{center}
Note that for any $f \in L(X)$, we have $\operatorname{Cent}\left(f\right) = \ast$, and thus $\Psi\left(f\right)$ is the base point of $\Sigma^n \Omega^n X$.
Under the identification $\Sigma^n \Omega^n X = S^n\wedge\Map_*\left(S^n,X\right)$,
the two components above are
$$\operatorname{Cent}\left(f\right)\in S^n \quad \textrm{ and } \quad \ell:S^n \to X, \quad s\mapsto \ell(s):=f\left(c_{s,\operatorname{Cent}\left(f\right)}\right).$$
Abusing the notation, $\operatorname{Cent}\left(f\right)$ above denotes the corresponding 
point in the quotient $S^n = I^n/\partial I^n$.
This will be further developed below.
On the other hand, 
the little $n$-cube $c_{s,\operatorname{Cent}\left(f\right)}$ that depends on both $f$ and $s$,
follows a certain construction to be explained below too.

\smallskip

\begin{proposition}
\label{prop:psi}
    The map $\Psi$ is continuous.
\end{proposition}
\begin{proof}
We define the subspace $K \subseteq C_n(X)$ as the set of maps $f$ such that $\operatorname{CSupp}\left(f\right)\notin \overline{B}$.
On this subspace, the center map $\operatorname{Cent}$ is continuous by construction,  
see the explicit formulae in Equation \eqref{ecu: rcenter}. 
Consequently, the restriction of $\Psi$ to $K$ is continuous, as it is the composition of continuous maps:
    \[
    K \xrightarrow{ \ \operatorname{Cent} \times \id } S^n\times K \xrightarrow{\id \times \ell_{(-)}} S^n\times \Omega^n X \longrightarrow S^n \wedge \Omega^n X.
    \]
    Furthermore, the center map $\operatorname{Cent}: C_n(X) \to S^n$ factors as the composition
    \[
    C_n\left(X\right) \to C_n\left(X\right) / L(X)
    \xrightarrow{\ \overline{\operatorname{CSupp}}\ } \overline{\mathcal C_n\left(1\right)}/ \overline{A} 
    \xrightarrow{\ \operatorname{rCent} \ } I^n/\partial I^n \cong S^n,
    \]
    which are continuous on $K.$
    It remains to check continuity for $f$ with $\operatorname{CSupp}\left(f\right) \in \overline{B}$. 
	If this is the case, then $f \in L(X)$ and so by definition, $\operatorname{Cent}\left(f\right) = \ast$.
	We claim that $f$ is identically the constant map to the basepoint.
	Indeed, since $\overline{\operatorname{CSupp}}\left(f\right) \in \overline{B}$, 
    there exists at least one dimension $i \in \{1, \dots, n\}$ such that the $i$-th projection 
    of $\operatorname{CSupp}\left(f\right)$ is the full interval $[0,1]$.
	Recall from Lemma \ref{lemma: old cubical support contained in new cubical support} that 
    $\operatorname{CSupp}'\left(f\right) \subseteq \operatorname{CSupp}\left(f\right)$.
	Therefore, for any cube $c \in \mathcal{C}_n(1)$ such that $f(c) \neq \ast$,
    the image $\operatorname{Im}(c)$ must span the entire $i$-th axis.
	However, any cube containing a whole axis can be approximated by a sequence of 
    interior cubes $\left\{c_k\right\}$ which do not contain the full axis.
	For such interior cubes, $f(c_k) = \ast$.
	By the continuity of $f$, it follows that $f(c) = \lim_{k \to \infty} f(c_k) = \ast$.
	Consequently, $f$ is the constant map, and the loop associated to $f$
    by $\Psi$ is the constant loop $\ell_c$.
    Next, we must show that for any sequence $g_n \to f \equiv \ast$ in $C_n(X)$, 
    the sequence $\Psi\left(g_n\right)$ converges to the basepoint in $\Sigma^n \Omega^n X$.
	By Proposition \ref{prop: The cubical support is continuous},
    the cubical support map is continuous, so $\operatorname{CSupp}\left(g_n\right)$ becomes arbitrarily close to $\overline{B}$.
	This implies that for the loop $\ell_{g_n}(s) = g\left(c_{s, \operatorname{Cent}\left(g\right)}\right)$
    to evaluate to a non-basepoint value, the chosen cube $c_{s, \operatorname{Cent}\left(g\right)}$ 
    must have an $i$-th projection arbitrarily close to $[0,1]$.
	By the explicit construction of $c_{s,t}$ in Lemma \ref{lemma: The cube c_{s,t}}, 
    this geometric constraint forces the coordinate $s_i$ to be arbitrarily close
    to the $i$-th coordinate of $\operatorname{Cent}\left(g\right)$.
	Therefore, the non-trivial domain of the loop $\ell_{g_n}$ is confined to an arbitrarily small
    neighbourhood of a hyperplane.
	Simultaneously, since $g_n$ converges to the constant map $\ast$ in the compact-open topology, 
    the values of $\ell_{g_n}(s)$ uniformly approach the basepoint $\ast$.
	It follows that the loop $\ell_{g_n}$ converges to the constant loop in $\Omega^n X$.
	Since the target space is the smash product $\Sigma^n \Omega^n X = S^n \wedge \Omega^n X$,
    where $S^n \times \{\ell_c\}$ is collapsed to the basepoint, we conclude that $\Psi\left(g_n\right)$ converges to the basepoint.
	This establishes the desired continuity.
\end{proof}

We also need the following auxiliary result.
It explicitly describes the little $n$-cube $c_{s,\operatorname{Cent}\left(f\right)}$
that appears in the loop $\ell: S^n \to X$ of the second component of $\Psi$.

\begin{lemma}
\label{lemma: The cube c_{s,t}}
    For each pair of points $s,t\in I^n \backslash \partial I^n$,
there is a unique little $n$-cube $c=c_{s,t}: I^n \rightarrow I^n$, depending continuously on $(s,t)$, such that: 
\begin{enumerate}
	\item $c(s)=t$,
	\item $\operatorname{Im}(c)$ is the largest $n$-rectangle contained in $I^n$ 
    requiring that for each coordinate, 
    at least one side of the embedded rectangle touches the corresponding side of the ambient cube.
    \end{enumerate}
\end{lemma}
If $s$ or $t$ lies in the boundary $\partial I^n$, we will not need to construct the cube $c_{s,t}$.
Indeed, in this case $\Psi$ will map the pair $[t,\ell]$ to the base point of $C_n(X)$.

\begin{proof}[Proof of Lemma \ref{lemma: The cube c_{s,t}}]
Let us explicitly construct $c$.
Recall from Equation (\ref{ecu: little cubes components})
that the rectilinear embedding $c$ is of the form 
\begin{equation*}
	c\left(x_1,...,x_n\right) = \big(\left(b_1-a_1\right)x_1 + a_1 , ... , \left(b_n-a_n\right)x_n + a_n\big),
\end{equation*}
where $0\leq a_i < b_i \leq 1$ for all $i$.
Thus, each component $c_i$ of $c$ is determined by the numbers $a_i$ and $b_i$.
Imposing that $c(s)=t$, we get the relations
$$(b_i-a_i)s_i + a_i = t_i \quad \textrm{for each $i$}.$$
A second constraint on each component $i$ determines the numbers $a_i,b_i$ uniquely.
Since $c$ touches each face of $\partial I^n$,  
at each component $c_i$ we must have one of the following two options:
\begin{enumerate}
\item $c_i(0) = 0$, and then we deduce that $$ c_i(x_i)=\frac{t_i}{s_i}\cdot x_i,$$ or else  
\item $c_i(1) = 1$, and then we deduce that 
$$c_i(x_i)= x_i+(1-x_i)\left(\frac{s_i-t_i}{s_i-1}\right).$$ 
\end{enumerate}
Now, 
there is no choice to be made here.
Rather, the case is determined by the relationship between $s$ and $t$.
That is, we are considering the separate cases where $s_i > t_i$ or $s_i < t_i$.
More precisely,
if for a fixed $i$, we have that $0< t_i/s_i<1$, 
then the first formula gives a well-defined affine linear map onto the interval, 
but the second formula does not (because its image lands outside the unit interval).
If on the contrary the inequality $0< t_i/s_i<1$ does not hold,
then the first formula does not work, while the second does.
To finish, 
observe that the formulae agree when $s_i=t_i$, 
which makes the construction of $c$ a continuous function of $s$ and $t$.
Of course, in the case $s_i=t_i$,
we are taking the identity map at the $i$-th coordinate.
This finishes the proof.    
\end{proof}
Our arguments so far show that the resulting function is a pointed continuous function of $f$.

\medskip

\noindent  \fbox{\strut \ Definition of the homotopy $\mathcal H$ \ }

\medskip

\noindent The next step in the proof of the approximation theorem is to construct a homotopy $\mathcal H:C_n\left(X\right)\times I \to C_n\left(X\right)$
such that
\begin{equation}\label{ecu: identities for H}
	\mathcal H_0 = \id_{C_n\left(X\right)}, \qquad \mathcal H_1 = \alpha \circ \Psi, \qquad \textrm{and} \qquad  \mathcal H(*,t) = * \ \forall t \in I.
\end{equation}

\medskip

The following auxiliary construction is a key ingredient for the homotopy $\mathcal H$. 
Intuitively speaking, the idea is to construct a homotopy from maps whose cubical support is more than a point to maps whose cubical support is exactly a point. We construct this homotopy by enlarging the cubes in $C_n(1)$ until they hit the boundary while also preserving the center. This is made precise in the following auxiliary construction.

\medskip

\noindent \emph{Auxiliary construction: The rectilinear expansion 
of a little $n$-cube $c\in \mathcal C_n(1)$ 
induced by a map $f\in C_n\left(X\right)$ whose center $\operatorname{Cent}\left(f\right)$ belongs to $\mathring{c}$.}

\medskip

\noindent \emph{Proof and explanations
for the auxiliary construction:} 
Let us explain the construction for a little $1$-interval $c\in \mathcal C_1(1)$; 
the general case is an application of this construction at each coordinate of a little $n$-cube.
Let $c\in \mathcal C_1(1)$, so that 
$$c(t) = (b-a)t+a$$ for some $a,b$ with $0\leq a< b\leq 1$.
Let
$$\operatorname{dist}\left(\Img(c),\partial I\right) = \min\{a,1-b\}$$
be the distance from $\Img(c)$ to the boundary of the interval.
\begin{definition}
\label{def: rectilinear expansion}
Let $c\in \mathcal C_1(1)$.
The \emph{rectilinear expansion} of $c$ induced by a map $f\in C_1(X)$ whose center $\operatorname{Cent}\left(f\right)$ 
belongs to $\mathring{c}$ is the unique path $\gamma = \gamma_c^f : I \to \mathcal C_1(1)$ 
satisfying:
\begin{itemize}
\item $\gamma(0) = c$,
\item for every $s\in (0,1]$, 
\begin{itemize}
    \item the cube $\gamma(s)$ is the rectilinear embedding that increases 
the diameter of $c$ by $\min\left\{s,\operatorname{dist}\left(\Img(c),\partial I\right)\right\}$ 
while keeping the ratios between the sides equal, and
\item the center $\operatorname{Cent}\left(f\right)$ is preserved by $\gamma(s)$, 
in the sense that if $t_0\in I$ is the unique point such that $c(t_0)=\operatorname{Cent}\left(f\right)$,
then $\gamma(s)(t_0) = \operatorname{Cent}\left(f\right)$.
\end{itemize}
\end{itemize}
\end{definition}
Let us explicitly describe the path $\gamma$ above. 
For each $s\in I$, we have $\gamma(s)\in \mathcal C_1(1)$ of the form 
\begin{equation*}
	\gamma(s)(t) = \left(b(s)-a(s)\right)t + a(s) \quad \forall \ t\in I.
\end{equation*}
For a fixed $s\in I$, 
the two mentioned conditions on $a(s)$ and $b(s)$ determine $\gamma(s)$ uniquely.
These conditions are the following. 
First, that
$$\gamma(s)\left(\frac{p-a}{b-a}\right) = p,$$
where for simplicity we denote $p=\operatorname{Cent}\left(f\right)$.
Secondly,
that the radius of $\gamma(s)$ is that of $c$ increased by 
$\min \left\{s,a,1-b\right\}$:
\begin{equation*}
	\left(b(s)-a(s)\right) - (b-a) = \min\left\{s,a,1-b\right\}.
\end{equation*}
These conditions produce the linear system of equations 
\begin{align*}[left=\empheqlbrace]
\left(b-p\right)a(s) + \left(p-a\right)b(s)     &= p(b-a)\\
	  - a(s) +b(s)                            &= \alpha(s)+b-a
\end{align*}
where $\alpha(s) = \min\left\{s,a,1-b\right\}$.
The unique solution to the system above is 
\[
a(s) = \frac{a^2-ab-a\alpha(s) + \alpha(s)p}{a-b} \qquad \textrm{and} \qquad b(s) = \frac{ab-b^2-b\alpha(s)+\alpha(s)p}{a-b}.
\]
Therefore, for a fixed $s\in I$,  the little $1$ interval $\gamma(s)$ is given by
\[
\gamma(s)(t) = \frac{\alpha(s)(p-a)}{a-b} + \left(b-a+\alpha\left(s\right)\right)t+a\quad \forall \ t\in I.
\]
This finishes the construction for a little $1$-interval.
In the general case, given 
$c\in \mathcal C_n(1)$ of the form 
\begin{equation*}
	c\left(t_1,...,t_n\right) = \big(\left(b_1-a_1\right)t_1 + a_1 , ... , \left(b_n-a_n\right)t_n + a_n\big)
\end{equation*}
 and $f\in C_n\left(X\right)$,  define $\gamma = \gamma^f_c:I \to \mathcal C_n(1)$ to be the path such that
$$\gamma(s)\left(t_1,...,t_n\right) 
= \left(\alpha_1(s)t_1 + p_1-p_1\alpha_1(s) , ... , \alpha_n(s)t_n + p_n-p_n\alpha_n(s)\right)  \quad \forall \ \left(t_1,...,t_n\right)\in I^n.$$
This finishes the construction of the auxiliary path $\gamma^f_c : I \to \mathcal C_n(1)$, 
and therefore the proof and explanations for the auxiliary construction.\hfill$\square$ \\

Now, we are ready to define the homotopy $\mathcal H : C_n\left(X\right)\times I \to C_n\left(X\right)$. 
For each $\left(f,t\right)\in C_n\left(X\right)\times I$,
the image of the homotopy is the map
\begin{center}
\begin{tikzcd}[row sep = small]
{\mathcal H\left(f,t\right) : \mathcal C_n(1)} \arrow[r] & X                           \\
\qquad \qquad c \arrow[r, maps to]                                     & f\left(\gamma^f_c(t)\right).
\end{tikzcd}
\end{center}
Here, $\gamma^f_c$ is the rectilinear expansion of $c$ induced by $f$. 
This rectilinear expansion shrinks the cubical support of $f$ to a point, 
in the sense that 
\begin{equation*}
\label{ecu: cubical support shrinks}
\operatorname{CSupp}\left(\gamma^f_c(1)\right) = \operatorname{Cent}\left(f\right)
\end{equation*}
for every non-constant $f$.
We must check that $\mathcal H$ is well-defined, continuous, 
and satisfies the requirements for being a pointed homotopy from $\id_{C_n\left(X\right)}$ to $\alpha\Psi.$
To check that $\mathcal H$ is well-defined, 
we must corroborate that for each $\left(f,t\right)$, 
the map $\mathcal H\left(f,t\right)$ indeed defines an element in $C_n\left(X\right)$. 
Recall from Proposition \ref{Prop: Geometric characterization of C_n} 
that given $c_1,c_2\in \mathcal C_n(1)$ with $\mathring c_1 \cap \mathring c_2 = \emptyset$, 
this amounts to checking that either
\begin{center}
	$\mathcal H\left(f,t\right)(c_1) = *$ \quad or \quad $\mathcal H\left(f,t\right)(c_2) = *$.
\end{center}
But this is immediate: 
if $\mathring c_1 \cap \mathring c_2 = \emptyset$, 
then $\operatorname{Cent}\left(f\right)$ cannot be in both $c_1$ and $c_2$ at the same time.
Therefore,  
by definition, $\mathcal H\left(f,t\right)$ maps every little cube $c_i$
not having $\operatorname{Cent}\left(f\right)$ in its image to the base point.
We conclude that $\mathcal H$ is well-defined.
The proof that $\mathcal H$ is continuous is done in Appendix \ref{sec: Appendix on continuity of homotopy}.
To finish, it follows directly from the definitions  
that the identities of Equations (\ref{ecu: identities for H}) hold:
\begin{enumerate}
    \item To check that $\mathcal H_0 = \id_{C_n(X)}$, note that 
    $\mathcal H_0 = \mathcal H\left(-,0\right) : C_n(X) \to C_n(X)$ is explicitly given on 
    some $f\in C_n(X)$ as the function $\mathcal C_n(1) \to X$ acting on a given little $n$-cube $c$ as 
    \[
    \mathcal H_0\left(f\right)(c) 
    = \mathcal H\left(f,0\right)(c)  = f\left(\gamma^f_c(0)\right).
    \]
    Now, the rectilinear expansion of $c$ along $f$, which is the path $\gamma^f_c$,
    recovers $c$ when evaluated at $0$.
    This is the second item in Definition \ref{def: rectilinear expansion}.
    Thus, the expression above is exactly $f(c)$, as we claimed.
    \item To check that $\mathcal H_1 = \alpha \circ \Psi$, note that 
    $\mathcal H_1 = \mathcal H\left(-,1\right) : C_n(X) \to C_n(X)$ is explicitly given on 
    some $f\in C_n(X)$ as the function $\mathcal C_n(1) \to X$ acting on a given little $n$-cube $c$ as 
    \[
    \mathcal H_1\left(f\right)(c) 
    = \mathcal H\left(f,1\right)(c)  = f\left(\gamma^f_c(1)\right).
    \]
    On the other hand, evaluated at this same function $f$ and little $n$-cube $c$, the map $\alpha \circ \Psi$ gives
    \[
    \alpha\left[\operatorname{Cent}\left(f\right),L\right](c) = L\left(c^{-1}\left(\operatorname{Cent}\left(f\right)\right)\right),
    \]
    where $L:S^n\to X$ is given by $L(s) = f\left(d_{s,\operatorname{Cent}\left(f\right)}\right)$.
    Here, $d_{s,\operatorname{Cent}\left(f\right)}$ is the auxiliary cube constructed in Lemma \ref{lemma: The cube c_{s,t}}.
    Denote by $t_0\in S^n$ the unique point such that $c(t_0)=\operatorname{Cent}\left(f\right)$.    
    It remains to check that 
    \[
    f\left(\gamma^f_c(1)\right) = L\left(c^{-1}\left(\operatorname{Cent}\left(f\right)\right)\right) = L\left(t_0\right) 
    = f\left(d_{t_0,\operatorname{Cent}\left(f\right)}\right).
    \]
    Here the first equality requires proof, while the other two hold by definition.
    It follows from the uniqueness in the second condition of Lemma \ref{lemma: The cube c_{s,t}}, that
    the required equality follows.
    \item That the homotopy is pointed, i.e., that $\mathcal H(*,t) = *$ for every $t \in I$, is immediate.
    This is because the definition of $\mathcal H(f,t)$ is given by evaluating its first variable $f$ at some cube. 
    If this first variable is the trivial map, the result follows.
\end{enumerate}

We have therefore explained in full detail the definition of $\mathcal H$, 
and checked it gives a pointed homotopy between $\id_{C_n(X)}$ and $\alpha \circ \Psi.$

\medskip

\fbox{\strut \ Proving the equality $\Psi \circ \alpha = \id_{\Sigma^n \Omega^n X}$ \ }

\medskip

\noindent Let  $[t,\ell]\in \Sigma^n \Omega^n X$.
By definition,
\begin{equation}
    \label{ecu : Two components check}
    \Psi \left(\alpha [t,\ell]\right)  = \left[\operatorname{Cent}\left(\alpha[t,\ell]\right), L\right],
\end{equation}
where $L:S^n\to X$ is the loop 
$$
s\mapsto L(s) = \alpha[t,\ell]\left(c_{s,\operatorname{Cent}\left(\alpha[t,\ell]\right)}\right).
$$
Assume that $X$ is not the one-point space and that $\ell$ is not the constant loop; 
otherwise the result is trivial. 
We must check that the two components in the right hand side of Equation (\ref{ecu : Two components check}) are, 
respectively, $t$ and $\ell$.
\begin{enumerate}
    \item  Let us check that $\operatorname{Cent}\left(\alpha[t,\ell]\right) = t$. 
    To do so, it suffices to check that $\operatorname{CSupp}\left(\alpha[t,\ell]\right)$ reduces to the single point $\{t\}$.
Indeed: 
if $c\in \mathcal C_n(1)$ is such that $\alpha[t,\ell](c)\neq *$, 
it follows from the definition of $\alpha[t,\ell]$ that $t\in \mathring{c}$ (recall Equation (\ref{Ecu: Explicit alpha X})).
 Thus, $t\in \Img(c)$ for all little $n$-cubes $c$ such that $\alpha[t,\ell](c)\neq *$. 
Therefore, $t$ is in the intersection of all such images, namely $\operatorname{CSupp}'\left(\alpha[t,\ell]\right)$.
Now, if $t_0\neq t$, then we can always find a little $n$-cube $\widetilde c$ 
with radius greater than $r_0$ 
such that $t_0\notin \Img(\widetilde c)$ and $t\in \Img(\widetilde{c})$, 
and furthermore $\ell\left(\left(\widetilde{c}\right)^{-1}(t)\right)\neq *$ 
(possibly after reparametrization: it might be the case that the loop $\ell$ passes through the base point of $X$, 
but we are assuming $\ell$ is not the constant loop).
\item Let us check that $L(s)=\ell(s)$ for all $s\in S^n$. 
Indeed: 
for $t = \operatorname{Cent}\left(\alpha[t,\ell]\right)$, the little $n$-cube 
$c = c_{s, \operatorname{Cent}\left(\alpha[t,\ell]\right)}$ is such that $c(s) = t$. 
Said differently,  $c^{-1}(t)=s$. 
Therefore, by definition:
$$L(s) = \alpha[t,\ell]\left(c\right) =  
\begin{cases} * &\mbox{if } t\notin \mathring{c} \\
	\ell\left(c^{-1}(t)\right) & \mbox{otherwise}\end{cases}  \ = \ell(s).$$
\end{enumerate}

To summarize: we have explained the definition of the map $\Psi$ and the homotopy $\mathcal H$, 
and have shown the deformation retract requirements of Equation (\ref{ecu: Identities for Psi}) hold. 
Thus, the proof of Theorem \ref{teo: Approx Theorem} is now complete.

\begin{remark}
In this section, we prove the approximation theorem for the little $n$-cubes (rectangles) operad, 
but the ideas could easily be modified to other little convex bodies operads,
like the little $n$-disks operad.
If that were the case, some modifications
would be needed to explain 
what exactly is meant by the center and how the expansion is defined. 
For simplicity, we have decided to look at the little cubes operads only.
\end{remark}

\section{The Recognition Principle for \texorpdfstring{$n$}{n}-fold reduced suspensions}
\label{sec : Recognition principle}

In this section, we  prove the recognition principle for $n$-fold reduced suspensions. 
The precise statement
is the following.

\begin{theorem} 
\label{teo : Recognition Principle}
	Let $X$ be a $\mathcal C_n$-coalgebra. 
	Then there is a pointed space $\Gamma^n(X)$, naturally associated to $X$, 
	together with a weak equivalence of $\mathcal C_n$-coalgebras
	\begin{center}
		\begin{tikzcd}
 \Sigma^n\Gamma^n(X) \arrow[r, "\simeq"] &X,
		\end{tikzcd}
	\end{center}
	which is a deformation retract in the category of pointed spaces. 
    Therefore, every $\mathcal C_n$-coalgebra has the homotopy type of an $n$-fold reduced suspension.
\end{theorem}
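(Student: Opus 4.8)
The plan is to realize $X$ as a \emph{strict} pointed retract of the $n$-fold suspension $\Sigma^n\Omega^n X$, to cut this suspension down to the correct desuspension $\Gamma^n(X)$, and then to promote the resulting retraction to a weak equivalence of $\mathcal C_n$-coalgebras. First I would assemble the retract data. Write $c\colon X\to C_n(X)$ and $\varepsilon_X\colon C_n(X)\to X$ for the structure map and counit of the coalgebra $X$, so that $\varepsilon_X\circ c=\id_X$, and let $\alpha_X\colon\Sigma^n\Omega^n X\to C_n(X)$ and $\Psi_X\colon C_n(X)\to\Sigma^n\Omega^n X$ be the maps of the Approximation Theorem (Theorem \ref{teo: Approx Theorem}), with $\Psi_X\circ\alpha_X=\id$ and $\alpha_X\circ\Psi_X\simeq\id$. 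Set $\sigma:=\Psi_X\circ c\colon X\to\Sigma^n\Omega^n X$. Two clean identities drive everything: by naturality of the comonad counit $\varepsilon\colon C_n\to\id$ together with the coalgebra counit axiom for $\Sigma^n\Omega^n X$ one gets $\varepsilon_X\circ\alpha_X=\eta_X$, the counit $[t,\ell]\mapsto\ell(t)$ of the $(\Sigma^n,\Omega^n)$-adjunction; and from the explicit formula $\Psi_X(f)=[\operatorname{Cent}(f),\ell_f]$, using that the cube $c_{t,t}$ with coincident source and target is the operadic unit $\id_{\mathcal C_n(1)}$, one computes $\eta_X\circ\Psi_X(f)=\ell_f(\operatorname{Cent}(f))=f(\id_{\mathcal C_n(1)})=\varepsilon_X(f)$. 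Combining these, $\eta_X\circ\sigma=\varepsilon_X\circ c=\id_X$ on the nose, so $X$ is a strict pointed retract of $\Sigma^n\Omega^n X=\Sigma^n(\Omega^n X)$ with section $\sigma$ and retraction $\eta_X$.

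Next I would identify the correct desuspension. The section records, for each $x$, a point $\operatorname{Cent}(c(x))\in S^n$ and a loop $\kappa(x):=\ell_{c(x)}\in\Omega^n X$, with $\sigma(x)=[\operatorname{Cent}(c(x)),\kappa(x)]$. Since the loop coordinate always lands in the image of $\kappa\colon X\to\Omega^n X$, the section corestricts to $\sigma\colon X\to\Sigma^n\Gamma^n(X)$, where I define $\Gamma^n(X):=\operatorname{Im}(\kappa)$ (taken as a homotopy image, so as to be well behaved and natural for maps of coalgebras); correspondingly $\eta_X$ restricts to $\Sigma^n\Gamma^n(X)\to X$ and still satisfies $\eta_X\circ\sigma=\id_X$. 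This already produces the pointed retract $\Sigma^n\Gamma^n(X)\to X$ demanded by the statement, and a sanity check confirms the definition: when $X=\Sigma^n Y$ one computes $\kappa[t,y]=(s\mapsto[s,y])$, whence $\Gamma^n(\Sigma^n Y)\simeq Y$ and $\Sigma^n\Gamma^n(X)\cong X$.

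The remaining, and main, task is to show that $\eta_X\colon\Sigma^n\Gamma^n(X)\to X$ is a weak equivalence; equivalently, that the idempotent $\sigma\circ\eta_X$ restricted to $\Sigma^n\Gamma^n(X)$ is homotopic to the identity. I expect this to be the hard part. It cannot follow from the retract alone, since $\sigma\circ\eta_X\not\simeq\id$ on all of $\Sigma^n\Omega^n X$ (as $\Sigma^n\Omega^n X\not\simeq X$ in general, e.g.\ for $X=S^n$); it must genuinely use the coassociativity and cocommutativity of the $\mathcal C_n$-coaction. The strategy would be to build the required homotopy explicitly from the rectilinear-expansion homotopy $\mathcal H$ of the Approximation Theorem, which shrinks cubical supports to their centers while fixing the center, and to feed in coassociativity to see that on the loop-image $\Gamma^n(X)$ the two composites agree under this deformation.

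Finally I would promote the equivalence to one of $\mathcal C_n$-coalgebras. By Theorem \ref{teo: Iterated suspensions are coalgebras} the space $\Sigma^n\Gamma^n(X)$ carries a canonical $\mathcal C_n$-coalgebra structure, and the maps $\sigma$ and $\eta_X$ would be checked to respect the coactions by invoking naturality together with the fact that $\alpha_n$ is a morphism of comonads, so that $\Psi_X$ transports coactions up to the controlled homotopy $\mathcal H$; Theorem \ref{teo: SuspensionLoops coalgebras} on $\Sigma^n\Omega^n$-coalgebras can serve as an independent cross-check. The two genuinely delicate points are thus the explicit homotopy $\sigma\circ\eta_X\simeq\id$ on $\Sigma^n\Gamma^n(X)$, and the reconciliation of the strict pointed retract with the homotopical coalgebra equivalence.
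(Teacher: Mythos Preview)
Your retract identities $\varepsilon_X\circ\alpha_X=\eta_X$ and $\eta_X\circ\Psi_X=\varepsilon_X$ are correct (the cube $c_{t,t}$ is indeed the operadic identity), so $\eta_X\circ\sigma=\id_X$ holds on the nose. The genuine gap is exactly where you locate it, and it is not a detail: your definition $\Gamma^n(X):=\operatorname{Im}(\kappa)$ is a guess, and there is no evident homotopy $\sigma\circ\eta_X\simeq\id$ on $\Sigma^n\operatorname{Im}(\kappa)$. For $[t,\kappa(x)]$ with $t\neq\operatorname{Cent}(c(x))$ one gets $\eta_X[t,\kappa(x)]=c(x)(c_{t,\operatorname{Cent}(c(x))})=:y$, and then $\sigma(y)=[\operatorname{Cent}(c(y)),\kappa(y)]$ bears no obvious relation to $[t,\kappa(x)]$; the homotopy $\mathcal H$ lives on $C_n(X)$, not on $\Sigma^n\Omega^n X$, and does not transport here without further input. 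Nor is it clear that $\eta_X$ restricted to $\Sigma^n\Gamma^n(X)$ is a $\mathcal C_n$-coalgebra map for the standard suspension coaction on the source.

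The paper avoids both difficulties by reversing the order of operations. Rather than guessing a desuspension inside $\Omega^n X$ and then trying to prove the equivalence, it first manufactures from $X$ a $\Sigma^n\Omega^n$-\emph{coalgebra} and only then desuspends, using the elementary fact that $\Sigma^n$ preserves equalizers (Proposition~\ref{Prop: suspension commutes equalizers}) to show that every $\Sigma^n\Omega^n$-coalgebra is \emph{literally} of the form $\Sigma^n P_n(-)$ for an explicit equalizer $P_n$ (Theorem~\ref{teo: SuspensionLoops coalgebras}). The intermediate $\Sigma^n\Omega^n$-coalgebra is produced either categorically as $\alpha_n^!(X)$, the right adjoint to the change-of-comonad functor $(\alpha_n)_*$, with the retract and the $\mathcal C_n$-coalgebra compatibility following formally from a general lemma on comonad morphisms that are levelwise retracts (Lemma~\ref{lemma: catcomonad}); or, closer to your point-set picture, as the $C_n$-subcoalgebra $S(X)\subseteq X$ of points whose cubical support is a single point, on which the $C_n$-coaction visibly factors through $\alpha_n$ (Section~\ref{sec : The one point subcoalgebra}). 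In either version $\Gamma^n(X)$ is $P_n$ applied to this intermediate coalgebra, not a raw image in $\Omega^n X$, and the weak-equivalence step is absorbed into the Approximation Theorem together with the coalgebra counit axiom, so your ``hard part'' never has to be confronted directly.
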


The result above is the converse of Theorem \ref{teo: Iterated suspensions are coalgebras},
where it was proven that $n$-fold reduced suspensions are $\mathcal C_n$-coalgebras.
Summarizing, we are providing the following intrinsic characterization of $n$-fold reduced suspensions as $\mathcal C_n$-coalgebras.

\begin{corollary} 
Every $n$-fold suspension is a $\mathcal C_n$-coalgebra,
and if a pointed space is a $\mathcal C_n$-coalgebra then it is homotopy equivalent to an $n$-fold suspension.
\end{corollary}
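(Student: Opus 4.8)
The plan is to realize $X$ as a natural strong deformation retract of an $n$-fold suspension, and then to desuspend. The only inputs are the coaction $c\colon X\to C_n(X)$ of the $\mathcal C_n$-coalgebra $X$ together with the approximation theorem (Theorem \ref{teo: Approx Theorem}), which supplies the pointed retraction $\Psi_X\colon C_n(X)\to \Sigma^n\Omega^n X$ of the comonad map $\alpha_X$. Composing, I would set
\[
\hat c := \Psi_X\circ c \colon X \longrightarrow \Sigma^n\Omega^n X ,
\]
and aim to show first that $X$ is a retract of the suspension $\Sigma^n\Omega^n X$, and second that this retract can be desuspended to a space $\Gamma^n(X)$ with $\Sigma^n\Gamma^n(X)\simeq X$.

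First I would establish the retract, which is elementary and uses only the counit axiom of the coalgebra. The key observation is the strict identity $\eta_X\circ\Psi_X=\varepsilon_X$, where $\eta_X\colon\Sigma^n\Omega^n X\to X$ is the adjunction counit $[t,\ell]\mapsto\ell(t)$ and $\varepsilon_X$ is the comonad counit $f\mapsto f(\id)$. Indeed, for $f\in C_n(X)$ one computes $\eta_X\Psi_X(f)=f\big(c_{\,\operatorname{Cent}(f),\,\operatorname{Cent}(f)}\big)=f(\id)=\varepsilon_X(f)$, because the little cube $c_{s,t}$ of Claim~2 in the proof of Theorem \ref{teo: Approx Theorem} is the identity embedding when $s=t$. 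Combined with the counit axiom $\varepsilon_X\circ c=\id_X$ this gives $\eta_X\circ\hat c=\id_X$ on the nose. Hence $e:=\hat c\circ\eta_X$ is a strict idempotent on $\Sigma^n\Omega^n X$ that splits $X$ off the suspension, with $\hat c$ the section and $\eta_X$ the retraction.

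Next I would desuspend. Since only the counit has been used so far (a retract of a suspension is merely a co-$H$-space, not yet a suspension), the coassociativity must enter here. The natural device is the comonadic resolution $C_n^{\bullet+1}(X)$ augmented to $X$: the coaction $c$ furnishes extra degeneracies, so the augmentation $|C_n^{\bullet+1}(X)|\to X$ is a homotopy equivalence of $\mathcal C_n$-coalgebras. Applying the approximation theorem levelwise gives natural equivalences $C_n^{k+1}(X)\simeq\Sigma^n\big(\Omega^n C_n^{k}(X)\big)$, exhibiting the resolution as, levelwise, the $n$-fold suspension of the simplicial space $A_\bullet:=\Omega^n C_n^{\bullet}(X)$. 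As $\Sigma^n$ is a left adjoint and commutes with geometric realization, I would conclude
\[
X\;\simeq\;|C_n^{\bullet+1}(X)|\;\simeq\;\Sigma^n\,|A_\bullet|\;=:\;\Sigma^n\Gamma^n(X),\qquad \Gamma^n(X):=|A_\bullet|,
\]
naturally in $X$; the section $\hat c$ then exhibits the structure map $\Sigma^n\Gamma^n(X)\to X$ as a retract in pointed spaces, and the comonad-morphism property of $\alpha_n$ identifies it as a map of $\mathcal C_n$-coalgebras. In keeping with the explicit style of this paper, one could instead try to desuspend the idempotent $e$ directly, via an explicit homotopy $e\simeq\Sigma^n g$, and take $\Gamma^n(X)$ to be the mapping telescope of $g$ on $\Omega^n X$.

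The hard part will be the desuspension. Producing the retract is essentially formal, but pulling $\Sigma^n$ out of the realization requires the levelwise equivalences $C_n^{k+1}(X)\simeq\Sigma^n A_k$ to be compatible with \emph{all} faces and degeneracies simultaneously, i.e.\ a coherence statement for the approximation equivalence rather than its bare existence. This is exactly the place where the full $\mathcal C_n$-coalgebra structure, and not merely a single homotopy-coassociative comultiplication, is indispensable: a general co-$H$-space, or an arbitrary retract of a suspension, need not be a suspension, so the argument cannot succeed without using the higher operadic coherences encoded by $\mathcal C_n$.
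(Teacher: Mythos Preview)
Your retract argument is correct and is essentially the same observation the paper makes in Section~\ref{sec : The one point subcoalgebra}: the strict identity $\eta_X\circ\Psi_X=\varepsilon_X$ together with the counit axiom gives $X$ as a retract of $\Sigma^n\Omega^n X$. So far, so good.

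The gap is exactly where you locate it: the desuspension. Your cobar resolution $C_n^{\bullet+1}(X)$ has face maps built from the comonad comultiplication $\Delta$ and counit $\varepsilon$ of $C_n$, and you want to transport these along the levelwise equivalences $C_n^{k+1}(X)\simeq \Sigma^n\Omega^n C_n^k(X)$. But only $\alpha_n$ is a morphism of comonads; its homotopy inverse $\Psi$ is not, and the homotopy $\mathcal H$ of Theorem~\ref{teo: Approx Theorem} is a homotopy of \emph{spaces}, not of comonads. So there is no reason for the levelwise equivalences to assemble into a map of simplicial objects, and ``$|C_n^{\bullet+1}(X)|\simeq \Sigma^n|A_\bullet|$'' is not justified. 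This is not a technicality you can expect to patch with a thickening: it is the Eckmann--Hilton dual of the delicate part of May's argument (quasi-fibrations, auxiliary spaces), and the paper explicitly says it does \emph{not} dualize May's route.

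The paper sidesteps the coherence problem entirely by replacing geometric realization with \emph{equalizers}. The key structural input is Proposition~\ref{Prop: suspension commutes equalizers}: although $\Sigma^n$ is a left adjoint, it nevertheless preserves equalizers (a short point-set check). From this, Theorem~\ref{teo: SuspensionLoops coalgebras} shows that every $\Sigma^n\Omega^n$-coalgebra is \emph{isomorphic}, not merely equivalent, to an $n$-fold suspension: the cosplit equalizer presenting $X$ desuspends on the nose. It then remains to pass from $C_n$-coalgebras to $\Sigma^n\Omega^n$-coalgebras, which the paper does in two ways: either via the right adjoint $\alpha^!$ to change-of-comonad (Lemma~\ref{lemma: catcomonad}), or point-set by exhibiting the $C_n$-subcoalgebra $S(X)\subseteq X$ of points with one-point cubical support (Theorems~\ref{teo : S(X) is subcoalgebra} and~\ref{teo : S(X) is SigmaOmega coalgebra}). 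In either case the desuspension happens through a finite limit that $\Sigma^n$ preserves, so no simplicial coherence is ever needed.
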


\begin{remark}
Our Theorem \ref{teo : Recognition Principle} does not require any connectivity assumptions on the spaces,
unlike similar statements in the literature (see for example \cite{Kle97,Blo22}).
Therefore, Theorem \ref{teo : Recognition Principle} is the sharpest possible result.
This follows from the fact that every $\mathcal C_n$-coalgebra is $(n-1)$-connected. 
Indeed, let $X$ be a $\mathcal C_n$-coalgebra with structure map $c:X\to C_n(X)$.
By the approximation theorem, the space $C_n(X)$ is 
homotopic to $\Sigma^n \Omega^n X$, and thus
$(n-1)$-connected. 
Since the composition $X \xrightarrow{c} C_n(X) \xrightarrow{\varepsilon_X} X$ is the identity on $X$ by the counit axiom, 
it follows that $X$ is $(n-1)$-connected.
\end{remark}

For readability, we shall give the proof of Theorem \ref{teo : Recognition Principle} straightaway, 
making reference to the results and notation of the following two subsections.

\begin{proof}[Proof of Theorem \ref{teo : Recognition Principle}]
By Theorem \ref{teo: Approx Theorem}, there is a natural morphism of comonads 
$\alpha_n : \Sigma^n \Omega^n \longrightarrow C_n$, and $\Sigma^n \Omega^n X$ is a deformation retract of $C_n\left(X\right)$.  
Since $\Sigma^n\Omega^n$ preserves equalizers (Proposition \ref{Prop: suspension commutes equalizers}), 
it follows from Lemma \ref{lemma: catcomonad} that the counit map $(\alpha_n)_\ast \alpha_n^!(X) \to X$ 
is a $C_n$-algebra morphism which is a deformation retract of pointed spaces. 
Since $(\alpha_n)_\ast$ preserves the underlying topological space, 
it follows that the $\Sigma^n \Omega^n$-coalgebra $\alpha_n^!(X)$  is a deformation retract of $X$ as a pointed space.  
It then follows from Theorem \ref{teo: SuspensionLoops coalgebras} together with 
the approximation theorem that $\alpha_n^!(X)$ is naturally homeomorphic to
an $n$-fold suspension, and so the counit map $(\alpha_n)_\ast \alpha_n^!(X) \to X$ is an
$C_n$-coalgebra map from a $n$-fold reduced suspension to $X$.  
In particular, $\Gamma^n$ is the functor $ P_n(\alpha_n)_\ast\alpha_n^!.$
\end{proof}

We give a second proof of Theorem \ref{teo : Recognition Principle} 
in Section \ref{sec : The one point subcoalgebra} using explicit formulae very similar to those appearing in the approximation theorem.
This alternative proof is more concrete, and has the further benefit of giving a characterization 
in terms of a certain $C_n$-subcoalgebra.

\subsection{The change of coalgebra structure induced by a comonad morphism}

In this section, 
we explain how
a morphism of comonads $\alpha : C_1 \to C_2$ induces an adjoint pair 
$$\alpha_* :C_1-\mathsf{Coalg} \leftrightarrows C_2-\mathsf{Coalg} : \alpha^!$$
between the corresponding categories of coalgebras (under reasonable hypotheses on the underlying ambient category).
The final goal is to prove the technical Lemma \ref{lemma: catcomonad}, 
which is an essential ingredient for proving Theorem \ref{teo : Recognition Principle}.

\medskip

Suppose that $C_1$ and $C_2$ are two comonads over a category $\mathcal M$ which admits finite limits, and that $\alpha: C_1 \to C_2$ is a morphism of comonads. 
The \textit{change of coalgebra functor}
$$\alpha_\ast : C_1-\mathsf{Coalg} \longrightarrow  C_2-\mathsf{Coalg}$$
is given by mapping a $C_1$-coalgebra $X$ to the same underlying object of $\mathcal M$
equipped with the $C_2$-coalgebra structure map given by the composition
$$X \xrightarrow{\gamma_X} C_1(X) \xrightarrow{\alpha_X} C_2(X).$$
On morphisms, $\alpha_*$ is the identity.

\smallskip

Since $\mathcal M$ has finite limits, by the dual of Dubuc's adjoint triangle theorem \cite{dubuc68},
the change of coalgebra functor $\alpha_*$ has a right adjoint $\alpha^!$
which we call the \textit{enveloping coalgebra functor.} 
If $X$ is a $C_2$-coalgebra, then the $C_1$-coalgebra $\alpha^!\left(X\right)$ is explicitly given as the equalizer 
in $C_1-\mathsf{Coalg}$ of the following pair of morphisms:
$$
\begin{tikzcd}
C_1(X) \arrow[rd, "\triangle^1_X", swap] \arrow[rr, "C_1\left(\delta_X\right)"] &                                                  
& C_1C_2(X) \\
                                                                & C_1 C_1(X) \arrow[ru, "C_1\left(\alpha_X\right)", swap] &                
\end{tikzcd}
$$
Above, $\delta_X$ is the structure map of $X$ as a $C_2$-coalgebra, 
and $\triangle^1_X$ is the 
comultiplication of the comonad $C_1$ evaluated at $X$.
The following proposition, which is the dual of \cite[Prop. 4.3.2]{Borceux}, 
gives conditions for this equalizer to be preserved by the forgetful functor to $\mathcal M$.

\begin{proposition}
\label{prop : Preserve limits}
Let $C$ be a comonad on $\mathcal M$ and let $U:C-\mathsf{Coalg}\to \mathcal M$ be the forgetful functor. Let $G: D\to C-\mathsf{Coalg}$ be a diagram such that $UG$ has a limit in $\mathcal M$ that is preserved by $C$ and $C\circ C.$ Then $G$ has a limit in $C-\mathsf{Coalg}$ that is preserved by $U.$
\end{proposition}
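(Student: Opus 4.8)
The plan is to show that $U$ \emph{creates} this limit: I would build the limit of $G$ by equipping the underlying limit $L := \lim_{d} UG(d)$ in $\mathcal M$ with a canonical $C$-coalgebra structure and then check it is universal. Write $G(d) = (X_d, \gamma_d)$ for the coalgebras in the diagram, so $UG(d) = X_d$, and let $p_d : L \to X_d$ denote the limit projections; also write $\varepsilon$ and $\Delta$ for the counit and comultiplication of the comonad $C$. Since $C$ preserves the limit by hypothesis, $C(L)$ with the maps $C(p_d)$ is the limit of $d \mapsto C(X_d)$. First I would observe that the composites $\gamma_d \circ p_d : L \to C(X_d)$ form a cone over this diagram: for a morphism $f : d \to d'$ in $D$, the fact that $G(f)$ is a coalgebra morphism, combined with the cone identity $UG(f) \circ p_d = p_{d'}$, gives $C(UG(f)) \circ \gamma_d \circ p_d = \gamma_{d'} \circ p_{d'}$. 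By the universal property of $C(L)$ this cone induces a unique map $\gamma_L : L \to C(L)$ characterised by $C(p_d) \circ \gamma_L = \gamma_d \circ p_d$ for all $d$.

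Next I would verify that $(L,\gamma_L)$ is genuinely a $C$-coalgebra. The counit axiom $\varepsilon_L \circ \gamma_L = \id_L$ is checked by composing with each projection $p_d$: naturality of $\varepsilon$ rewrites $p_d \circ \varepsilon_L \circ \gamma_L$ as $\varepsilon_{X_d} \circ C(p_d) \circ \gamma_L = \varepsilon_{X_d} \circ \gamma_d \circ p_d$, which equals $p_d$ by the counit axiom for $\gamma_d$, and since the $p_d$ are jointly monic this forces $\varepsilon_L \circ \gamma_L = \id_L$. The coassociativity axiom $\Delta_L \circ \gamma_L = C(\gamma_L) \circ \gamma_L$ is the delicate point, and it is precisely here that the hypothesis that $C \circ C$ preserves the limit becomes indispensable: both sides are maps $L \to CC(L)$, and only because $CC(L) = \lim_d CC(X_d)$ can I test the identity by composing with the projections $CC(p_d)$. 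Using naturality of $\Delta$ on the left and the defining identity of $\gamma_L$ together with functoriality of $C$ on the right, the two sides reduce to $\Delta_{X_d} \circ \gamma_d \circ p_d$ and $C(\gamma_d) \circ \gamma_d \circ p_d$ respectively, which agree by the coassociativity of $\gamma_d$. I expect this coassociativity check to be the main obstacle — not conceptually hard, but the one place where the second preservation hypothesis is unavoidable and where the bookkeeping of naturality squares must be carried out with care.

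Finally I would establish the universal property. Each $p_d$ is a coalgebra morphism by the defining relation $C(p_d)\circ\gamma_L = \gamma_d\circ p_d$, so $(L,\gamma_L)$ with the $p_d$ is a cone in $C-\mathsf{Coalg}$. Given any competing cone $q_d : (Z,\gamma_Z) \to (X_d,\gamma_d)$, the $q_d$ form a cone over $UG$ in $\mathcal M$ and hence factor uniquely through a map $h : Z \to L$ with $p_d \circ h = q_d$; one checks $h$ is a coalgebra morphism by testing $C(h)\circ\gamma_Z = \gamma_L\circ h$ against the projections $C(p_d)$, where both sides collapse to $\gamma_d\circ q_d$. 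Uniqueness of $h$ in $\mathcal M$ then yields its uniqueness as a coalgebra morphism, so $(L,\gamma_L)$ is the limit of $G$. Since by construction $U(L,\gamma_L) = L = \lim UG$, the forgetful functor preserves this limit, which completes the argument. The whole proof is the evident dualisation of the standard result on colimits of algebras over a monad, so no genuinely new ideas beyond careful dualisation are required.
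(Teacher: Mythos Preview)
Your proposal is correct and is precisely the standard dualisation the paper has in mind: the paper's own proof simply cites that the argument is dual to \cite[Prop.~4.3.2]{Borceux} and leaves the details to the reader. You have carried out exactly that dualisation, with the key observation that preservation of the limit by $C$ is needed to build $\gamma_L$ and preservation by $C\circ C$ is needed to verify coassociativity.
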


\begin{proof}
 The proof of this result is dual to that of \cite[Prop. 4.3.2]{Borceux}, and it is left to the reader.
\end{proof}

We will need two auxiliary definitions related to equalizers.
    
\begin{definition}
\label{def : cosplit equalizer}
    A \emph{cosplit equalizer} in a category is a diagram 
\begin{center}
\begin{tikzcd}
A \arrow[r, "p"] & B \arrow[r, "f", shift left] \arrow[r, "g"', shift right] \arrow[l, "h", dashed, bend left=60] & C \arrow[l, "s", dashed, bend left=60]
\end{tikzcd}
\end{center}
where 
\begin{equation}
\label{ecu: Split coequalizer}
    sg = \id_B, \quad hp = \id_A \quad \textrm{and } \quad sf = ph.
\end{equation}
\end{definition}

The notion of a cosplit equalizer above is dual to that of split coequalizer, 
and it plays in comonad theory the analog role of split coequalizers in the theory of monads (see \cite[VI. 6]{Mac71}).
The following result is elementary but important.

\begin{proposition}
\label{prop: Cosplit equalizer}
The cosplit equalizer of two morphisms is always an equalizer of the two morphisms; 
and any functor preserves cosplit equalizers.
\end{proposition}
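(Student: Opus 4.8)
The statement splits into two independent claims, and my plan is to treat them in turn, both working directly from the defining identities $sg=\id_B$, $hp=\id_A$ and $sf=ph$ of Definition \ref{def : cosplit equalizer}.

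For the first claim I would verify the universal property of the equalizer for $p\colon A\to B$ against the pair $f,g\colon B\to C$ by hand. Since $p$ sits in the fork we have $fp=gp$, so $p$ is a cone. Given any test morphism $q\colon X\to B$ with $fq=gq$, the natural candidate for a factorization through $p$ is $u:=hq\colon X\to A$. I would then check $pu=q$ via the chain $pu=phq=sfq=sgq=q$, which uses, in order, $ph=sf$ (the third identity), the hypothesis $fq=gq$, and $sg=\id_B$ (the first identity). For uniqueness, if $u'\colon X\to A$ also satisfies $pu'=q$, then $u'=(hp)u'=h(pu')=hq=u$ by the identity $hp=\id_A$; in particular $p$ is a split monomorphism, which is the structural reason uniqueness is forced. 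This establishes that $p$ is the equalizer of $f$ and $g$.

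For the second claim the point is that every ingredient is an equational statement between composites of the structure morphisms. If $F$ is any functor, applying $F$ to the diagram and to the three identities yields $F(s)F(g)=\id_{FB}$, $F(h)F(p)=\id_{FA}$ and $F(s)F(f)=F(p)F(h)$, together with $F(f)F(p)=F(g)F(p)$, simply because $F$ preserves composition and identities. Thus $F$ carries a cosplit equalizer to a cosplit equalizer, and by the first claim its image is again an equalizer. In other words, cosplit equalizers are \emph{absolute}: they are preserved by every functor. This is exactly the feature needed downstream, where it is combined with Proposition \ref{prop : Preserve limits} and Lemma \ref{lemma: catcomonad} to transport the retract structure through $\Sigma^n\Omega^n$.

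The one point demanding care, and the place I would be most explicit, is the fork relation $fp=gp$. This belongs to the data of a cosplit equalizer (it is encoded in the fork shape of the diagram) and should be invoked as such rather than deduced from the three splitting identities alone: those identities only yield $sfp=sgp=p$, and one cannot cancel the split epimorphism $s$. I would therefore flag clearly that the factorization argument rests on both the fork condition and the cosplitting data. As a sanity check on the bookkeeping, the whole statement is the formal dual of the classical fact that split coequalizers are absolute coequalizers (see \cite[VI.6]{Mac71}), so one may alternatively just invoke that result in the opposite category.
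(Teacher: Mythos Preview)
Your proof is correct and follows essentially the same route as the paper's: both verify the universal property by taking the candidate factorization $hq$ (the paper's $h\varphi$), run the chain $p(hq)=sfq=sgq=q$, and get uniqueness from $hp=\id_A$; preservation by functors is then immediate from the equational nature of the data. Your version is in fact more careful---you explicitly flag that the fork relation $fp=gp$ is part of the data rather than a consequence of the three splitting identities, a point the paper leaves implicit.
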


\begin{proof}
    Assume we have a cosplit equalizer with the notation from Definition \ref{def : cosplit equalizer}.
    To prove the first assertion, 
    assume that $\varphi$ is any map such that $f\varphi=g \varphi$.
    Then, 
    $$\varphi = hp \varphi = sg\varphi =  ph \varphi$$ factors through $p$. 
Since $hp=\id_A$, this factorization is unique.
The second assertion is a straightforward consequence of the fact that functors preserve the associativity of the composition and the identity on objects.
\end{proof}

The second definition we need in relation to equalizers is the following.

\begin{definition}
A pair of parallel arrows in a category
\begin{tikzcd}
A \arrow[r, "f", shift left] \arrow[r, "g"', shift right] & B
\end{tikzcd}
is \emph{coreflexive} if there is a common retraction, 
that is, a map $s:B\to A$ such that $sf = \id_A = sg$.
\end{definition}

Next, we relate cosplit and coreflexive equalizers with coalgebra structures.

\begin{proposition}
\label{prop: coalgebra structure map is cosplit equalizer}
Let $C = \left(C,\Delta,\varepsilon\right)$ be a comonad in an arbitrary category, and let $X$ be a $C$-coalgebra.
Then, the coalgebra structure map $\gamma : X \to C(X)$ fits into an equalizer diagram in the category of $C$-coalgebras
$$
\begin{tikzcd}
X \arrow[r, "\gamma"] & C(X) \arrow[r, "C\left(\gamma\right)", shift left] \arrow[r, "\Delta_X"', shift right] & CC(X).
\end{tikzcd}
$$
This equalizer is coreflexive, and it is furthermore cosplit after applying the forgetful functor to the underlying ambient category.
\end{proposition}

\begin{proof}
Let $X$ be a $C$-coalgebra with structure map $\gamma$.
As a consequence of the coassociativity axiom for $\gamma$,
we have the fork 
in the statement.
First, we prove that the diagram of the statemet is a cosplit equalizer 
after applying the forgetful functor to the underlying category.
By Proposition \ref{prop: Cosplit equalizer},
we are done as soon as we give cosplitting maps $h,s$ satisfying the identities of Equation (\ref{ecu: Split coequalizer}), 
taking $p=\gamma$, $f=C\left(\gamma\right)$, and $g=\Delta_X$.
These cosplittings $h$ and $s$ are respectively given by the corresponding counits 
$$
\varepsilon_X: C(X)\to X \qquad \textrm{and} \qquad \varepsilon_{C(X)}:CC(X)\to C(X).
$$
Let us check that the identities in Equation (\ref{ecu: Split coequalizer}) hold.
The identity $hp = \id_A$ becomes $\varepsilon_X \circ \gamma =\id_X$, 
which holds because it is precisely the counital axiom of the  $C$-coalgebra $X$.
Similarly, 
the identity $sg = \id_B$ becomes $\varepsilon_{C(X)} \circ \Delta_X = \id_{C(X)},$
which is exactly the counit axiom at $C(X)$.
It remains to check the identity $sf=ph$, that is, 
$\varepsilon_{C(X)} \circ  C(\gamma) = \gamma \circ \varepsilon_X$. 
This follows from the fact $\varepsilon$ is a natural transformation and so one has the diagram
$$
\begin{tikzcd}
C(X) \arrow[r, "C(\gamma)"] \arrow[d, "\varepsilon_X"]& CC(X)\arrow[d, "\varepsilon_{C(X)}"] \\
X \arrow[r, "\gamma"] & C(X).
\end{tikzcd}
$$
We have checked the three identities of Equation (\ref{ecu: Split coequalizer}).
Therefore, the mentioned diagram is a cosplit equalizer after applying the forgetful functor to the underlying category.
To finish, 
one checks that the diagram is coreflexive in the category of $C$-coalgebras.
To do so, note that we are in the dual situation of \cite[VI.7.1(iii)]{Mac71}.
In fact, our statement and proof is dual to \cite[Lemma 4.3.3]{Borceux}.
Since the diagram is a cosplit fork diagram in the underlying category, it follows from 
\cite[Lemma 4.3.3]{Borceux} that the diagram is actually an equalizer in $C$-coalgebras.
\end{proof}

\begin{remark}
It would be interesting to check whether the arguments in \cite[Section 5]{Harper2010} dualize
to prove that the category of $C$-coalgebras in pointed spaces with the wedge product studied in this paper has all small limits.
\end{remark}

Finally, 
the following technical lemma allows us to directly compare $C_1$- and $C_2$-coalgebras in pointed spaces under certain conditions.
It constitutes an essential ingredient in the proof of Theorem \ref{teo : Recognition Principle}.

\begin{lemma}
\label{lemma: catcomonad}
Let $\alpha: C_1 \to C_2$ be a morphism of comonads in $\mathsf{Top_*}$ which is a deformation 
retract of pointed spaces at each level. 
If $C_1$ preserves equalizers,
then 
the counit $\alpha_\ast \alpha^!\to \id_{C_2-\mathsf{Coalg}}$ of the $\left(\alpha_\ast,\alpha^! \right)$ adjunction 
is a deformation retract of pointed spaces at each level.
In particular, for every $C_2$-coalgebra $X$, 
the underlying map of pointed spaces $\alpha_\ast \alpha^!(X)\to X$ is a deformation retract.
\end{lemma}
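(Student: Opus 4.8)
The plan is to produce, for each $C_2$-coalgebra $X$, an explicit pointed retraction of the underlying map of the counit $\alpha_\ast\alpha^!(X)\to X$, thereby exhibiting $\alpha^!(X)$ as a retract of $X$ in $\mathsf{Top_*}$. Write $\delta_X\colon X\to C_2(X)$ for the structure map, $\varepsilon^1,\varepsilon^2$ for the counits of $C_1,C_2$, $\Delta_{C_1}$ for the comultiplication of $C_1$, and let $r_X\colon C_2(X)\to C_1(X)$ be the given pointed retraction, so that $r_X\alpha_X=\id_{C_1(X)}$. Since $C_1$ preserves equalizers, Proposition \ref{prop : Preserve limits} shows that the forgetful functor carries the equalizer defining $\alpha^!(X)$ to an equalizer in $\mathsf{Top_*}$; hence its underlying space is $E:=\mathrm{eq}(f,g)$ with inclusion $e\colon E\to C_1(X)$, where $f=C_1(\delta_X)$ and $g=C_1(\alpha_X)\circ\Delta_{C_1,X}$. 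Unwinding the adjunction $\alpha_\ast\dashv\alpha^!$ identifies the underlying map of the counit with $c=\varepsilon^1_X\circ e\colon E\to X$. The goal is then a pointed map $\rho\colon X\to E$ with $\rho\circ c=\id_E$.

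I need two ingredients: a retraction of $e$, and a single identity. For the first, I would apply the right adjoint $\alpha^!$ to the cosplit equalizer presentation of $X$ supplied by Proposition \ref{prop: coalgebra structure map is cosplit equalizer}. Being a right adjoint, $\alpha^!$ preserves that equalizer, and since cosplit equalizers are preserved by every functor (Proposition \ref{prop: Cosplit equalizer}), the resulting diagram stays cosplit; applying the forgetful functor and the canonical identification $\alpha^!C_2\cong C_1$ on cofree coalgebras, the underlying diagram $E\xrightarrow{e}C_1(X)\rightrightarrows C_1C_2(X)$ is a cosplit equalizer. In particular $e$ admits a pointed retraction $h\colon C_1(X)\to E$ with $h\circ e=\id_E$, namely $h=\alpha^!(\varepsilon^2_X)$ on underlying spaces under the above identification.

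The second and decisive ingredient is the identity $r_X\circ\delta_X\circ\varepsilon^1_X\circ e=e$. I would start from the equalizer relation $C_1(\delta_X)\circ e=C_1(\alpha_X)\circ\Delta_{C_1,X}\circ e$ and post-compose both sides with $\varepsilon^1_{C_2(X)}$. By naturality of $\varepsilon^1$ the left-hand side becomes $\delta_X\circ\varepsilon^1_X\circ e$, while the right-hand side becomes $\alpha_X\circ\varepsilon^1_{C_1(X)}\circ\Delta_{C_1,X}\circ e=\alpha_X\circ e$ by the counit axiom $\varepsilon^1_{C_1(X)}\circ\Delta_{C_1,X}=\id$. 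Hence $\delta_X\circ\varepsilon^1_X\circ e=\alpha_X\circ e$, and post-composing with $r_X$ and using $r_X\alpha_X=\id$ gives $r_X\circ\delta_X\circ\varepsilon^1_X\circ e=e$. Setting $\rho:=h\circ r_X\circ\delta_X$, a composite of pointed maps, I then compute $\rho\circ c=h\circ(r_X\circ\delta_X\circ\varepsilon^1_X\circ e)=h\circ e=\id_E$. Therefore the counit is a split monomorphism of pointed spaces at each level, which exhibits $\alpha^!(X)$ as a retract of $X$ and proves the lemma; note that no naturality of $\rho$ is required, since the statement is level-wise. I expect the main obstacle to be the bookkeeping that pins down the counit as $\varepsilon^1_X\circ e$ and extracts the retraction $h$ from the cosplit equalizer via the identification $\alpha^!C_2\cong C_1$; once these are in place, the decisive identity $\delta_X\circ\varepsilon^1_X\circ e=\alpha_X\circ e$ is a three-line diagram chase.
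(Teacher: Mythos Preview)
Your argument has a genuine gap in the construction of the retraction $h\colon C_1(X)\to E$. You propose to obtain $h$ by applying the right adjoint $\alpha^!$ to the cosplit equalizer presentation of $X$ and invoking Proposition~\ref{prop: Cosplit equalizer} (``cosplit equalizers are preserved by every functor''). But $\alpha^!$ is a functor on $C_2$-coalgebras, while the cosplitting maps $\varepsilon^2_X\colon C_2(X)\to X$ and $\varepsilon^2_{C_2(X)}\colon C_2C_2(X)\to C_2(X)$ are \emph{not} morphisms of $C_2$-coalgebras: for $\varepsilon^2_X$ to be one would require $\delta_X\circ\varepsilon^2_X=C_2(\varepsilon^2_X)\circ\Delta_{C_2,X}=\id_{C_2(X)}$, which fails whenever $\delta_X$ is not an isomorphism. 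Hence $\alpha^!$ cannot be applied to the cosplitting arrows, and nothing guarantees that the diagram $E\xrightarrow{e}C_1(X)\rightrightarrows C_1C_2(X)$ is cosplit. Indeed, the only evident candidate $s=C_1(\varepsilon^2_X)$ satisfies $s\circ f=s\circ g=\id_{C_1(X)}$, which would force $e\circ h=\id_{C_1(X)}$ and hence $e$ to be an isomorphism --- clearly too strong. Without $h$, the final step $\rho\circ c=h\circ e=\id_E$ cannot be carried out.

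Your key identity $\delta_X\circ\varepsilon^1_X\circ e=\alpha_X\circ e$ is correct and gives $(r_X\circ\delta_X)\circ c=e$, but this only says that $r_X\circ\delta_X\colon X\to C_1(X)$ restricts to the inclusion on the subspace $E$; to produce a retraction of $c$ you would still need $r_X\circ\delta_X$ to factor through $E$, i.e., to equalize $C_1(\delta_X)$ and $C_1(\alpha_X)\circ\Delta_{C_1,X}$, and this does not follow from the stated hypotheses on $r_X$. The paper proceeds differently: it exhibits $\alpha$ as a map between the two equalizer \emph{diagrams} (via $\alpha_X$ and $\alpha_{C_2(X)}$) and then argues that the induced map on limits inherits the retract property from the levelwise retracts, rather than trying to build a section through a single retraction $r_X$.
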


\begin{proof}
Let $X$ be a $C_2$-coalgebra.
Let us prove that the underlying map of pointed spaces of the $C_2$-coalgebra morphism 
$\alpha_\ast \alpha^!(X)\to X$ is a deformation retract.
Since $\alpha_*$ is the identity on the underlying pointed space, 
this underlying map is $\alpha^!(X)\to X$. 
Recall from Proposition \ref{prop: coalgebra structure map is cosplit equalizer}
that the $C_2$-coalgebra structure $\gamma$ on $X$ is given by presenting $X$ as the (coreflexive) equalizer of the following diagram:  
$$
\begin{tikzcd}
C_2(X) \arrow[r, "C_2\left(\gamma\right)", shift left] \arrow[r, "\Delta_X"', shift right] & C_2C_2(X).
\end{tikzcd}
$$
Here, $\Delta_X$ is the comultiplication of the $C_2$ comonad at $X$.
This equalizer is taken in $C_2-\mathsf{Coalg}$, 
but we can compute the underlying topological space via the same limit in the category of pointed topological spaces.
This is because this limit is a cosplit equalizer after applying the forgetful functor,
and therefore an equalizer which is preserved by the forgetful functor 
(see Proposition \ref{prop: Cosplit equalizer}). 
Since $C_1$ is assumed to preserve equalizers, by Proposition \ref{prop : Preserve limits}, 
and using a similar argument,
the underlying topological space of $\alpha^!(X)$ may be computed as the equalizer of the diagram
$$
\begin{tikzcd}[row sep = large, column sep= large]
C_1(X)  \arrow[rr, "C_1\left(\gamma\right)", shift left] \arrow[rr, "C_1\left(\alpha_X\right)\circ \Delta_{C_1}"', shift right] &  & C_1C_2(X) 
\end{tikzcd}
$$
in the category of pointed topological spaces.
The deformation retract provided by $\alpha$ thus 
extends to a map (in the category of pointed topological spaces) between the diagram defining $\alpha^!(X)$ and one defining $X$,
namely,
$$
\begin{tikzcd}[row sep = large, column sep= large]
C_1(X) \arrow[d, "\alpha_X"'] \arrow[rr, "C_1\left(\gamma\right)", shift left] \arrow[rr, "C_1\left(\alpha_X\right)\circ \Delta_{C_1}"', shift right] &  & C_1C_2(X) \arrow[d, "\alpha_{C_2(X)}"] \\
C_2(X) \arrow[rr, "C_2\left(\gamma\right)", shift left] \arrow[rr, "\Delta_X"', shift right]                                                          &  & C_2C_2(X)                             
\end{tikzcd}
$$
The corresponding map of limits is thus precisely the desired map $\alpha^!(X)\to X$. 
Since retracts are preserved under limits, we conclude that this map is a deformation retract of pointed spaces. 
\end{proof}

\subsection{The \texorpdfstring{$\Sigma^n\Omega^n$}{SnOn}-coalgebras are \texorpdfstring{$n$}{n}-fold reduced suspensions}

In this section, 
we completely characterize the coalgebras over the $\Sigma^n\Omega^n$-comonad (Theorem \ref{teo: SuspensionLoops coalgebras}).

\medskip

A warning on the notation:
in other parts of this paper, 
we have consistently denoted by $\Delta$ and $\varepsilon$ the comonadic structure maps of the comonad $C_n$
constructed from the little $n$-cubes operad; 
while $\Delta'$ and $\varepsilon'$ were used for the comonaic structure
maps of the comonad $\Sigma^n \Omega^n$.
Since a single comonad will play a role in this section,
namely $\Sigma^n \Omega^n$,
we make an exception here to simply the reading, 
and denote by $\Delta$ and $\varepsilon$ the structure maps of $\Sigma^n \Omega^n$ as a comonad.

\begin{theorem}
\label{teo: SuspensionLoops coalgebras}
Let $X$ be a $\Sigma^n\Omega^n$-coalgebra.
Then $X$ is naturally homeomorphic to 
the $n$-fold reduced suspension of a space $P_n(X)$ which can be computed as the equalizer of the following pair of maps:
\[
\begin{tikzcd}[column sep=large]
\Omega^n X \ar[r,shift left=.75ex,"\Omega^n \gamma"]
  \ar[r,shift right=.75ex,swap," \eta_{\Omega^n X} "]
&
\Omega^n \Sigma^n\Omega^n X.
\end{tikzcd}
\]
Here, $\eta$ is the unit of the $\left(\Sigma^n,\Omega^n\right)$ adjunction, and $\gamma$ is the $\Sigma^n\Omega^n$-coalgebra structure map of $X$. 
\end{theorem}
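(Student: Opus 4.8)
The plan is to exhibit $X$ as a cosplit equalizer and then to recognise that this equalizer is nothing but $\Sigma^n$ applied to the one defining $P_n(X)$. Throughout, let $\eta\colon \id \to \Omega^n\Sigma^n$ and $\varepsilon\colon \Sigma^n\Omega^n \to \id$ denote the unit and counit of the $(\Sigma^n,\Omega^n)$-adjunction. The comonad $\Sigma^n\Omega^n$ then has counit $\varepsilon$ and comultiplication $\Delta = \Sigma^n\eta\Omega^n$; concretely $\Delta_X = \Sigma^n\eta_{\Omega^n X}$.

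First I would unwind the coassociativity axiom of the coalgebra $(X,\gamma)$, which reads $\Sigma^n\Omega^n\gamma \circ \gamma = \Sigma^n\eta_{\Omega^n X}\circ \gamma$; thus $\gamma$ equalizes the pair $\left(\Sigma^n\Omega^n\gamma,\ \Sigma^n\eta_{\Omega^n X}\right)$. Applying Proposition \ref{prop: coalgebra structure map is cosplit equalizer} to the comonad $\Sigma^n\Omega^n$, the fork
\[
X \xrightarrow{\ \gamma\ } \Sigma^n\Omega^n X \rightrightarrows \Sigma^n\Omega^n\Sigma^n\Omega^n X,
\]
whose two parallel arrows are $\Sigma^n\Omega^n\gamma$ and $\Sigma^n\eta_{\Omega^n X}$, is a cosplit equalizer, the cosplittings being the counits $\varepsilon_X$ and $\varepsilon_{\Sigma^n\Omega^n X}$. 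By Proposition \ref{prop: Cosplit equalizer} it is therefore an honest equalizer, so $\gamma$ presents $X$ as $\mathrm{eq}\left(\Sigma^n\Omega^n\gamma,\ \Sigma^n\eta_{\Omega^n X}\right)$ in $\mathsf{Top_*}$.

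The crucial observation is that this is exactly the image under $\Sigma^n$ of the fork defining $P_n(X)$, since $P_n(X) = \mathrm{eq}\left(\Omega^n\gamma,\ \eta_{\Omega^n X}\right)$ and the two arrows above are $\Sigma^n(\Omega^n\gamma)$ and $\Sigma^n(\eta_{\Omega^n X})$. Invoking Proposition \ref{Prop: suspension commutes equalizers} to the effect that $\Sigma^n$ preserves this equalizer, I obtain
\[
\Sigma^n P_n(X) \cong \mathrm{eq}\left(\Sigma^n\Omega^n\gamma,\ \Sigma^n\eta_{\Omega^n X}\right) \cong X,
\]
the comparison isomorphism being induced by $\gamma$ together with uniqueness of equalizers. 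Since $\Sigma^n P_n(X)$ carries the suspension coalgebra structure $\Sigma^n\eta_{P_n(X)}$ (the image of $P_n(X)$ under the comparison functor $Y \mapsto (\Sigma^n Y, \Sigma^n\eta_Y)$) and $\gamma$ is itself a coalgebra structure map, a routine check shows this isomorphism is one of $\Sigma^n\Omega^n$-coalgebras. Naturality is then formal: a coalgebra morphism $f\colon (X,\gamma_X)\to (Y,\gamma_Y)$ induces $\Omega^n f$ intertwining the two defining forks, hence a map $P_n(f)$, and the uniqueness in the equalizer universal property makes the square relating $f$ and $\Sigma^n P_n(f)$ commute.

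The single non-formal ingredient, and therefore the main obstacle, is the preservation of this equalizer by $\Sigma^n$ (Proposition \ref{Prop: suspension commutes equalizers}): a left adjoint such as $\Sigma^n = S^n \wedge (-)$ has no reason to commute with equalizers in general, and one must exploit the concrete structure of smash products in compactly generated spaces — in particular that $S^n$ is a fixed, well-behaved space and that the basepoint identifications defining the smash do not interfere with the subspace of $\Omega^n X$ cut out by the equalizer — to verify it. I note that one cannot shortcut this step: the equalizer presenting $P_n(X)$ is \emph{not} cosplit (its would-be cosplittings $\Omega^n\varepsilon_X$ force $P_n(X)=\Omega^n X$), so its preservation does not follow formally from Proposition \ref{prop: Cosplit equalizer}. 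Everything else amounts to the bookkeeping of identifying the comonad comultiplication with $\Sigma^n\eta\Omega^n$, so that the equalizer presenting $X$ and the one obtained by suspending $P_n(X)$ literally coincide.
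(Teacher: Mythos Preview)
Your proposal is correct and follows essentially the same approach as the paper: present $X$ as the cosplit equalizer coming from Proposition~\ref{prop: coalgebra structure map is cosplit equalizer} (this is Proposition~\ref{prop: cospliteq}), identify the comultiplication $\Delta_X$ with $\Sigma^n\eta_{\Omega^n X}$, and then invoke Proposition~\ref{Prop: suspension commutes equalizers} to pull $\Sigma^n$ outside the equalizer. Your additional remarks on naturality, the coalgebra structure on $\Sigma^n P_n(X)$, and the failure of the desuspended fork to be cosplit are accurate elaborations but go beyond what the paper records.
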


Theorem \ref{teo: SuspensionLoops coalgebras} is essentially a consequence of the fact that 
taking reduced suspensions preserve equalizers, despite this functor being a left adjoint.
Next, we give a proof of this elementary fact for completeness.

\begin{proposition}
\label{Prop: suspension commutes equalizers}
The $n$-fold reduced suspension functor $\Sigma^n : \mathsf{Top_*}\to \mathsf{Top_*}$ commutes with  equalizers. 
In other words, if \ $\operatorname{Eq}\left(f,g\right) \hookrightarrow X$ is the equalizer of the diagram 
\[
\begin{tikzcd}
X \ar[r,shift left=.75ex,"f"]
  \ar[r,shift right=.75ex,swap,"g"]
&
Y,
\end{tikzcd}
\]
then 
$\Sigma^n \operatorname{Eq}\left(f,g\right)\hookrightarrow \Sigma^n X $ is the equalizer of the diagram
\[
\begin{tikzcd}[column sep=large]
\Sigma^n X \ar[r,shift left=.75ex,"\Sigma^n f"]
  \ar[r,shift right=.75ex,swap,"\Sigma^n g"]
&
\Sigma^n Y.
\end{tikzcd}
\]
Since $\Omega^n$ is right adjoint and thus preserves limits, 
it further follows that $\Sigma^n\Omega^n$ preserves equalizers.
\end{proposition}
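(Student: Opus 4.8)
The plan is to check the equalizer property pointwise on the smash product and then promote the resulting continuous bijection to a homeomorphism using that $S^n$ is compact. I write a point of $\Sigma^n X = S^n \wedge X$ as $[t,x]$, so that $(\Sigma^n f)[t,x] = [t, f(x)]$. Since the forgetful functors $\mathsf{Top_*}\to\mathsf{Top}\to\mathsf{Set}$ create limits, the equalizer $E := \operatorname{Eq}(f,g)$ is the pointed subspace $\{x \in X : f(x) = g(x)\}$; and because $Y$ is Hausdorff, $E = (f,g)^{-1}(\Delta_Y)$ is the preimage of the closed diagonal, hence a \emph{closed} subspace of $X$.

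First I would identify the equalizer of $\Sigma^n f$ and $\Sigma^n g$ as a set. The basepoint always lies in it, and for a non-basepoint class $[t,x]$ (so $t\neq *$ and $x\neq *$) the images $[t,f(x)]$ and $[t,g(x)]$ coincide in $S^n\wedge Y$ if and only if $f(x)=g(x)$, since with $t\neq *$ the smash identifications force the $Y$-coordinates to agree. Therefore, as subsets of $\Sigma^n X$,
\[
\operatorname{Eq}(\Sigma^n f, \Sigma^n g) = \{[t,x] : x \in E\},
\]
which is exactly the image of the natural comparison map $\phi : \Sigma^n E \to \Sigma^n X$ induced by the inclusion $E \hookrightarrow X$. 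This $\phi$ is a continuous bijection onto the subset above; since $\operatorname{Eq}(\Sigma^n f,\Sigma^n g)$ is by definition the equalizer (as a subspace of $\Sigma^n X$), the proposition reduces to showing that $\phi$ is a homeomorphism onto it, for then $\Sigma^n E$ with its inclusion also satisfies the equalizer universal property.

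The main obstacle, and the only nontrivial point, is this last topological claim: a left adjoint need not preserve subspaces. I would settle it via the auxiliary fact that smashing with the compact Hausdorff space $S^n$ preserves closed embeddings. Since $E$ is closed in $X$, the inclusion $S^n\times E \hookrightarrow S^n\times X$ is a closed embedding (here I use that $S^n$ is locally compact, so the categorical and usual products agree), and $S^n\vee E = (S^n\times E)\cap(S^n\vee X)$. Chasing a closed set $C \subseteq \Sigma^n E$ through the smash quotient maps $p\colon S^n\times E \to \Sigma^n E$ and $q\colon S^n\times X \to \Sigma^n X$: the set $p^{-1}(C)$ is closed in $S^n\times X$, and its $q$-saturation is $p^{-1}(C)$ when $*\notin C$ and $p^{-1}(C)\cup(S^n\vee X)$ when $*\in C$; in both cases it is a closed, $q$-saturated subset, so its image $\phi(C)$ is closed in $\Sigma^n X$. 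Thus $\phi$ is a continuous closed injection, i.e. a closed embedding whose image carries the subspace topology. As $\operatorname{Eq}(\Sigma^n f,\Sigma^n g)$ also carries the subspace topology from $\Sigma^n X$ and equals this image as a set, the two agree, so $\Sigma^n E \hookrightarrow \Sigma^n X$ is the equalizer of $\Sigma^n f$ and $\Sigma^n g$.

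For the concluding statement I would invoke the suspension–loop adjunction $\Sigma^n \dashv \Omega^n$, with $\Omega^n = \Map_*(S^n,-)$. As a right adjoint, $\Omega^n$ preserves all limits, in particular equalizers. The endofunctor $\Sigma^n\Omega^n$ is the composite of $\Omega^n$ followed by $\Sigma^n$, and since $\Omega^n$ preserves equalizers by adjunction and $\Sigma^n$ does so by the argument above, the composite $\Sigma^n\Omega^n$ preserves equalizers as well.
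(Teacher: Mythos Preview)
Your proof is correct and follows essentially the same line as the paper: both identify $\operatorname{Eq}(\Sigma^n f,\Sigma^n g)$ set-theoretically with the image of $\Sigma^n E$ in $\Sigma^n X$ and then argue that the topologies match, using local compactness of $S^n$. You are more explicit on the key topological step---showing $\Sigma^n E\hookrightarrow\Sigma^n X$ is a closed embedding by tracking $q$-saturated closed sets---whereas the paper instead emphasizes the $k$-ification subtlety in $\mathsf{CGH}$ and then simply asserts the comparison map is a homeomorphism; your closedness observation (that $E$ is closed because $Y$ is Hausdorff) is exactly what makes both the subspace/$k$-ification issue and the embedding argument go through.
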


\begin{proof}
Recall that, as a set, the equalizer of $f$ and $g$ is given by
$$\operatorname{Eq}\left(f,g\right) = \left\{x\in X \mid f(x) = g(x)\right\}.$$
Since we tacitly work in the category $\mathsf{CGH}$ of compactly generated Hausdorff spaces,
the topology on this set is not necessarily the subspace topology,
but might be finer.
Explicitly, its topology is given by applying the $k$-ification functor $k(-)$, see 
for example \cite[Chapter 5]{May99}.
This functor is 
the right adjoint of the inclusion of $\mathsf{CGH}$ into ordinary topological spaces.
This change in the underlying topology is not an issue, 
because taking $n$-fold reduced suspension commutes with the $k$-ification functor.
Indeed, if $X$ and $Y$ are any compactly generated Hausdorff spaces and $X$ is locally compact,
then $X\times Y$ is a compactly generated Hausdorff space (\cite[Thm. 4.3]{Ste67}).
Since the sphere $S^n$ is locally compact, the product $S^n\times X$ is compactly generated Hausdorff 
for any compactly generated Hausdorff space $X$.
Since the smash product $S^n\wedge X$ is the pushout of the inclusion $S^n\vee X \hookrightarrow S^n\times X$
along the collapse map $S^n\vee X\to *$, it follows that $S^n\wedge X = \Sigma^n X$ is compactly generated Hausdorff.
Thus, $$\Sigma^n \operatorname{Eq}\left(f,g\right) = S^n \wedge \operatorname{Eq}\left(f,g\right).$$
Points in the suspension above are of the form $[t,x]$, with $t\in S^n$ and $x\in X$ such that $f(x)=g(x)$.
On the other hand,
$$
\operatorname{Eq}\left(\Sigma^n f,\Sigma^n g\right) = \left\{ [t,x] \in \Sigma^n X \mid \left[t,f(x)\right] = \left[t,g(x)\right]\right\}.
$$
Under the two identifications above, 
the natural map $$\Sigma^n \operatorname{Eq}\left(f,g\right) \to \operatorname{Eq}\left(\Sigma^n f,\Sigma^n g\right)$$
is a homeomorphism.
\end{proof}

Recall from Proposition \ref{prop: coalgebra structure map is cosplit equalizer}
that every coalgebra structure map is characterized as a cosplit equalizer.
In particular, we have the following result.

\begin{proposition}
\label{prop: cospliteq}
Let $X$ be a $\Sigma^n\Omega^n$-coalgebra with structure map $\gamma$.
Then, as a pointed space, $X$ is the (cosplit) equalizer of the following pairs of maps 
\[
\begin{tikzcd}[column sep=large]
\Sigma^n\Omega^n X \ar[r,shift left=.75ex,"\Sigma^n\Omega^n \gamma"]
  \ar[r,shift right=.75ex,swap,"\Delta_X"]
&
\Sigma^n\Omega^n \Sigma^n\Omega^n X.
\end{tikzcd}
\]
Here, $\triangle$ is the comonadic comultiplication of $\Sigma^n \Omega^n$.
\end{proposition}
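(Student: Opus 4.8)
The plan is to obtain this statement as an immediate specialization of Proposition \ref{prop: coalgebra structure map is cosplit equalizer}, applied to the comonad $C = \Sigma^n\Omega^n$ on $\mathsf{Top_*}$. That earlier proposition was proved for a comonad on an arbitrary category, and the cosplitting maps it produces are the counits of the comonad, which live in the underlying category. So once the abstract statement is unwound in the present situation, there is nothing left to prove beyond matching notation.

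First I would recall that $\Sigma^n\Omega^n$ is a comonad on $\mathsf{Top_*}$ whose comultiplication is $\Delta$ and whose counit is $\varepsilon$ (following the convention fixed at the start of this subsection, whereby $\Delta$ and $\varepsilon$ now denote the structure maps of $\Sigma^n\Omega^n$). Since $X$ is a $\Sigma^n\Omega^n$-coalgebra with structure map $\gamma : X \to \Sigma^n\Omega^n X$, Proposition \ref{prop: coalgebra structure map is cosplit equalizer} with $C = \Sigma^n\Omega^n$ tells us that $\gamma$ fits into a cosplit equalizer diagram
$$
\begin{tikzcd}
X \arrow[r, "\gamma"] & \Sigma^n\Omega^n X \arrow[r, "\Sigma^n\Omega^n(\gamma)", shift left] \arrow[r, "\Delta_X"', shift right] & \Sigma^n\Omega^n\Sigma^n\Omega^n X,
\end{tikzcd}
$$
whose cosplittings are the counits $\varepsilon_X : \Sigma^n\Omega^n X \to X$ and $\varepsilon_{\Sigma^n\Omega^n X} : \Sigma^n\Omega^n\Sigma^n\Omega^n X \to \Sigma^n\Omega^n X$. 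This is exactly the pair of parallel arrows appearing in the statement.

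Finally, I would invoke Proposition \ref{prop: Cosplit equalizer}: a cosplit equalizer is in particular an equalizer, and it is preserved by every functor. Because the cosplitting maps above are morphisms of $\mathsf{Top_*}$, the diagram is already a cosplit equalizer in $\mathsf{Top_*}$; equivalently, applying the forgetful functor from $\Sigma^n\Omega^n$-coalgebras to $\mathsf{Top_*}$ preserves it. Either reading shows that $X$ is the equalizer of $\Sigma^n\Omega^n(\gamma)$ and $\Delta_X$ as a pointed space, which is the desired conclusion. There is no genuine obstacle here: the only point requiring care is to confirm that the two parallel maps produced by the abstract proposition coincide with those in the statement, and this is immediate from the definition of the comultiplication $\Delta$ of $\Sigma^n\Omega^n$.
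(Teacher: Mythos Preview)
Your proposal is correct and follows essentially the same approach as the paper: both invoke Proposition \ref{prop: coalgebra structure map is cosplit equalizer} with $C = \Sigma^n\Omega^n$, identify the cosplittings as the counits $\varepsilon_X$ and $\varepsilon_{\Sigma^n\Omega^n X}$, and rely on Proposition \ref{prop: Cosplit equalizer} for the equalizer conclusion. Your extra remark that the cosplitting maps already live in $\mathsf{Top_*}$ (so the forgetful functor preserves the equalizer) makes explicit what the paper leaves implicit in the phrase ``as a pointed space''.
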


\begin{proof}
As mentioned, this is a particular case of Proposition \ref{prop: coalgebra structure map is cosplit equalizer}.
The following diagram is a cosplit equalizer:
\begin{center}
    \label{Eq: cosplit equalizer}
\begin{tikzcd}[column sep=large]
X \rar{\gamma} &\Sigma^n\Omega^n X \ar[r,shift left=.75ex,"\Sigma^n\Omega^n \gamma"]
  \ar[r,shift right=.75ex,swap,"\Delta_X"]
&
\Sigma^n\Omega^n \Sigma^n\Omega^n X,
\end{tikzcd}
\end{center}
where the cosplittings $h$ and $s$ are respectively given by the corresponding counits 
\[\varepsilon_X: \Sigma^n\Omega^n X\to X \qquad \textrm{and} \qquad \varepsilon_{\Sigma^n\Omega^n X}:\Sigma^n\Omega^n \Sigma^n\Omega^n X\to \Sigma^n\Omega^n X. \vspace{-5mm}
\]
\end{proof}

Let us finally prove the main result of this section.

\begin{proof}[Proof of Theorem \ref{teo: SuspensionLoops coalgebras}]
Recall that $\eta$ is the unit of the adunction between $\Sigma^n$ and $\Omega^n$.
Use, 
in the order given, 
Proposition \ref{prop: cospliteq}, that the comonadic coproduct $\Delta_X$ is explicitly given by $\Sigma^n \eta_{\Omega^n(X)}$,
and Proposition \ref{Prop: suspension commutes equalizers} to obtain that
\begin{align*}
    X &= \operatorname{Eq}\left(\Sigma^n\Omega^n \gamma,  \Delta_X\right) 
    = \operatorname{Eq}\left(\Sigma^n\Omega^n \gamma,  \Sigma^n \eta_{\Omega^n X}\right) 
    = \Sigma^n \operatorname{Eq}\left(\Omega^n \gamma,  \eta_{\Omega^n X}\right). 
\end{align*}
This is exactly what we wished to prove.
\end{proof}

\subsection{A point-set description of the recognition principle}
\label{sec : The one point subcoalgebra}

We give here an alternative proof of the recognition principle
mentioned in the introduction to Section \ref{sec : Recognition principle}.
This proof has the advantage of explicitly characterizing the $n$-fold suspension onto which 
a $C_n$-coalgebra deformation retracts.

\begin{theorem}
\label{teo : Every C_n coalgebra is a suspension}
Let $X$ be a $C_n$-coalgebra.
Then, there is a pointed space $Z$ together with a homotopy equivalence of $C_n$-coalgebras $X\simeq \Sigma^nZ$.
\end{theorem}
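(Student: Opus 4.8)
The plan is to upgrade the abstract retraction behind Theorem \ref{teo : Recognition Principle} into an explicit deformation retraction, exploiting the geometry of cubical supports, and to identify the target suspension intrinsically as the subcoalgebra of single-point cubical support. I would start from the three ingredients produced in the proof of Theorem \ref{teo: Approx Theorem}: the comonad morphism $\alpha_X\colon \Sigma^n\Omega^n X \to C_n(X)$, the pointed section $\Psi_X\colon C_n(X)\to \Sigma^n\Omega^n X$ with $\Psi_X\alpha_X=\id$, and the explicit homotopy $\mathcal H\colon \alpha_X\Psi_X\simeq \id_{C_n(X)}$ which shrinks the cubical support of each $f\in C_n(X)$ to its center $\operatorname{Cent}(f)$. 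The first thing I would record is that $\alpha_X$ is itself a morphism of $C_n$-coalgebras: by Theorem \ref{teo: Cofree coalgebras} it is the cofree transpose of the adjunction counit $\eta_X$, its source $\Sigma^n\Omega^n X=\Sigma^n(\Omega^n X)$ carrying the standard suspension coalgebra structure of an honest $n$-fold suspension. Moreover, by Proposition \ref{prop: suspensions have a single point cubical support}, the image $D(X):=\operatorname{Im}(\alpha_X)$ consists exactly of those $f\in C_n(X)$ whose cubical support $\operatorname{CSupp}(f)$ is empty or a single point; since $\Psi_X\alpha_X=\id$, the map $\alpha_X$ is a homeomorphism onto $D(X)$, so $D(X)$ is the \emph{one-point subcoalgebra} of $C_n(X)$ and is a genuine $n$-fold suspension.

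Next I would make the target suspension explicit. Following the proof of Lemma \ref{lemma: catcomonad}, the underlying pointed space of the enveloping coalgebra $\alpha_n^!(X)$ is the equalizer of the two maps $\Sigma^n\Omega^n(c)$ and $\Sigma^n\Omega^n(\alpha_X)\circ\Delta'$ from $\Sigma^n\Omega^n X$ to $\Sigma^n\Omega^n C_n(X)$, where $c\colon X\to C_n(X)$ is the coalgebra structure map and $\Delta'=\Sigma^n\eta_{\Omega^n X}$ is the comultiplication of $\Sigma^n\Omega^n$. Since both maps are $\Sigma^n$ applied to maps $\Omega^n X\to \Omega^n C_n(X)$, and $\Sigma^n$ preserves equalizers (Proposition \ref{Prop: suspension commutes equalizers}), this equalizer is an $n$-fold suspension: concretely I would set $Z:=\operatorname{Eq}\big(\Omega^n(c),\ \Omega^n(\alpha_X)\circ\eta_{\Omega^n X}\big)$ as maps $\Omega^n X\rightrightarrows \Omega^n C_n(X)$, and invoke Theorem \ref{teo: SuspensionLoops coalgebras} to identify $\alpha_n^!(X)\cong \Sigma^n Z$, with $\Sigma^n Z$ bearing its standard suspension $C_n$-coalgebra structure. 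Lemma \ref{lemma: catcomonad} then supplies a $C_n$-coalgebra morphism $\phi\colon \Sigma^n Z\to X$ whose underlying pointed map is a retract, so there is a pointed section $s\colon X\to \Sigma^n Z$ with $\phi s=\id_X$.

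Finally I would promote this pointed-space retract to a homotopy equivalence of $C_n$-coalgebras. Since $\phi s=\id_X$, it remains to produce a homotopy $s\phi\simeq \id_{\Sigma^n Z}$; the candidate is the one induced by $\mathcal H$ under the inclusion $\Sigma^n Z=\alpha_n^!(X)\hookrightarrow \Sigma^n\Omega^n X\xrightarrow{\alpha_X}D(X)\subseteq C_n(X)$. Because $\mathcal H$ only ever shrinks cubical supports while fixing centers, it preserves the single-point-support condition of $D(X)$ and is tailored to retract $C_n(X)$ onto exactly this one-point subcoalgebra, which is the geometric reason the construction works. \emph{The main obstacle is precisely the compatibility of $\mathcal H$ with the coalgebra structure}: one must verify that each stage $\mathcal H_t$ carries the equalizer $\Sigma^n Z$ into itself and is compatible with the comultiplication $\Delta$ on $D(X)$, so that the descended homotopy is a homotopy of $C_n$-coalgebra morphisms rather than merely of underlying maps. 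The construction of $Z$ and the pointed-space retract both follow formally from the earlier results; it is this last verification — that the explicit cubical-support-shrinking homotopy respects the coalgebra structure and hence exhibits $\phi$ as a genuine (not merely weak) homotopy equivalence of $C_n$-coalgebras — where the real work lies, after which the theorem follows with $\Sigma^n Z$ as the explicit suspension, characterized as the single-point-support subcoalgebra.
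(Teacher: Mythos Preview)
Your route is essentially the categorical argument behind Theorem~\ref{teo : Recognition Principle} (via the enveloping coalgebra $\alpha_n^!(X)$), followed by an attempt to upgrade the resulting retract to a homotopy equivalence using $\mathcal H$. The paper's proof takes a different and more direct path: rather than working with $D(X)\subseteq C_n(X)$ or the abstract equalizer $\alpha_n^!(X)$, it works entirely inside $X$, defining $S(X)=\{x\in X:\lvert\operatorname{CSupp}(c(x))\rvert=1\}\cup\{*\}$. This is morally the preimage of your $D(X)$ under the structure map $c$. The inclusion $S(X)\hookrightarrow X$ is then \emph{automatically} a $C_n$-coalgebra morphism once one checks $S(X)$ is closed under the coproduct, which the paper does via a short coassociativity argument (for nested cubes $d\subset d'$, coassociativity forces $c(x)(d)\neq *\Rightarrow c(x)(d')\neq *$). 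The retraction $r:X\to S(X)$ and the homotopy $ir\simeq\id_X$ are obtained by sandwiching $\Psi_X$, $\alpha_X$ and $\mathcal H$ between $c$ and the counit $\varepsilon_X$, transporting the approximation-theorem data from $C_n(X)$ down to $X$. Finally one writes down a $\Sigma^n\Omega^n$-coalgebra structure on $S(X)$ by hand and invokes Theorem~\ref{teo: SuspensionLoops coalgebras}. This buys concreteness and an intrinsic description of the suspension as a subspace of $X$; your approach instead packages everything through the adjunction $(\alpha_*,\alpha^!)$.

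Your proposal has a genuine soft spot, though not quite where you place it. The ``main obstacle'' you flag --- that $\mathcal H$ be a homotopy \emph{through} $C_n$-coalgebra morphisms --- is a red herring: in this paper a ``homotopy equivalence of $C_n$-coalgebras'' means a $C_n$-coalgebra morphism whose underlying pointed map is a homotopy equivalence; neither the homotopy inverse nor the homotopy itself is required to respect the coalgebra structure. So once you have a coalgebra map that is a pointed homotopy equivalence, you are done. What is actually missing from your outline is a mechanism to descend $\mathcal H$ from $C_n(X)$ to the domain and codomain of $\phi$: you propose restricting $\mathcal H$ to the equalizer $\Sigma^n Z$, but $\mathcal H_t$ has no reason to preserve that equalizer (it would need $\Psi$ to intertwine the comultiplications, which it does not). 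The paper's trick of conjugating by $c$ and $\varepsilon_X$ is exactly what makes this descent painless, and it is why working inside $X$ rather than inside $C_n(X)$ pays off. A minor point: double-check the direction of your retract identity --- from $\Psi\alpha=\id$ one expects $s\phi=\id_{\Sigma^n Z}$ rather than $\phi s=\id_X$, so the homotopy you actually need is $\phi s\simeq\id_X$.
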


The strategy of the proof is the following.
First, we show that every $C_n$-coalgebra $X$ contains a $C_n$-subcoalgebra $S(X)$,
and that $X$ deformation retracts onto $S(X)$ as a pointed space (Theorem \ref{teo : S(X) is subcoalgebra}).
Then, we show that $S(X)$ is furthermore a $\Sigma^n\Omega^n$-coalgebra (Theorem \ref{teo : S(X) is SigmaOmega coalgebra}).
Since every $\Sigma^n\Omega^n$-coalgebra $A$ is naturally homeomorphic to the $n$-fold suspension of a pointed space $P_n(A)$
(Theorem \ref{teo: SuspensionLoops coalgebras}),
it follows that $S(X)$ is an $n$-fold suspension, proving Theorem \ref{teo : Every C_n coalgebra is a suspension}.
Explicitly, $X \simeq \Sigma^nP_n(S(X))$.

\medskip

Let us proceed with the argument sketched before.
Recall that every $\Sigma^n\Omega^n$-coalgebra is the $n$-fold suspension of a pointed space 
(Theorem \ref{teo: SuspensionLoops coalgebras}),
so it follows from Proposition \ref{prop: suspensions have a single point cubical support}
that $\Sigma^n\Omega^n$-coalgebras considered as $C_n$-coalgebras have the property 
that the cubical support at each point is just a single point.
The next result proves the converse of this fact.
That is, every $C_n$-coalgebra of which the cubical support of every point (other than the base point)
is just a single point is not just a $C_n$-coalgebra,
but also a $\Sigma^n \Omega^n$-coalgebra.
It further turns out that the set of points whose cubical support is just a single point forms a $C_n$-subcoalgebra.

\begin{theorem}
\label{teo : S(X) is subcoalgebra}
Let $X$ be a $C_n$-coalgebra with coalgebra structure map $c:X\to C_n(X)$.
Then, the subspace $$S(X) = \left\{x\in X \mid \ |\ \operatorname{CSupp}(c(x))| = 1 \right\} \cup \{*\} \subseteq X$$
formed by the points of $X$ whose cubical support is a single point, together with the base point,
is such that the following assertions hold:
\begin{enumerate}
    \item The inclusion $S(X) \hookrightarrow X$ is a homotopy equivalence of pointed spaces.
    \item The subspace $S(X)$ is a $C_n$-subcoalgebra, and the inclusion is a morphism of $C_n$-coalgebras.
\end{enumerate}
Therefore, the inclusion $S(X)\hookrightarrow X$ is a homotopy equivalence of $C_n$-coalgebras.
\end{theorem}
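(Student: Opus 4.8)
The plan is to establish the two itemized assertions and then read off the final statement, handling (2) first since it is the more algebraic. The essential tool is coassociativity of the structure map $c$, which, via the explicit comultiplication of the Remark following Definition \ref{def: Comonad structure maps in Top}, unwinds to the pointwise identity
\[
c\big(c(x)(d)\big)(e)\;=\;c(x)\big(\gamma(d;e)\big)\qquad\text{for all } x\in X,\ d,e\in\mathcal C_n(1),
\]
where $\gamma(d;e)$ is the composite little cube, with $\operatorname{Im}(\gamma(d;e))=d\big(\operatorname{Im}(e)\big)$. Fix $x\in S(X)$ with $\operatorname{CSupp}(c(x))=\{t\}$ and a cube $d$, and put $y=c(x)(d)$. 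If $y=\ast$ there is nothing to check; otherwise $d$ is non-trivial for $c(x)$, so $t\in\operatorname{Im}(d)$. Using the identity above together with property $(D)$ (Proposition \ref{Prop: Geometric characterization of C_n}), a cube $e$ contributes to $\operatorname{CSupp}(c(y))$ only if $\gamma(d;e)$ is non-trivial for $c(x)$, which forces $\{t\}\subseteq\operatorname{Im}(\gamma(d;e))=d(\operatorname{Im}(e))$, i.e. $d^{-1}(t)\in\operatorname{Im}(e)$; hence $d^{-1}(t)$ lies in every contributing image and therefore in $\operatorname{CSupp}(c(y))$.

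The delicate point, which I expect to require continuity of $c$ rather than coassociativity alone, is the reverse inclusion: that $\operatorname{CSupp}(c(y))$ does not exceed the single point $\{d^{-1}(t)\}$. Combining Claim 1 (supports are points or $n$-rectangles) with the evaluation identity only bounds $\operatorname{CSupp}(c(y))$ by the $d^{-1}$-image of the support computed from cubes lying inside $\operatorname{Im}(d)$, which a priori could be a larger rectangle; excluding this uses that $c$ is continuous, so the non-triviality locus of $c(x)$ cannot degenerate into lower-dimensional slabs through $t$ while $\operatorname{CSupp}(c(x))$ collapses to a point. Granting this, $c(x)$ carries all of $\mathcal C_n(1)$ into $S(X)$, i.e. $c(x)\in C_n(S(X))$, so $c$ restricts to a coalgebra structure on $S(X)$ and the inclusion $S(X)\hookrightarrow X$ is tautologically a morphism of $C_n$-coalgebras, proving (2).

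For (1) I would produce a pointed strong deformation retraction $H\colon X\times I\to X$ of $X$ onto $S(X)$ which, for each $x$, contracts the support rectangle $\operatorname{CSupp}(c(x))$ to its centre $\operatorname{Cent}(c(x))$, in direct analogy with the rectilinear-expansion homotopy $\mathcal H$ of the approximation theorem (Theorem \ref{teo: Approx Theorem}); points already in $S(X)$ are fixed throughout, and the endpoint $H_1$ is the retraction $r\colon X\to S(X)$. Since $S(X)$ consists exactly of the points with single-point cubical support, the combination of (1) and (2) shows that $S(X)\hookrightarrow X$ is simultaneously a morphism of $C_n$-coalgebras and a homotopy equivalence, which is the concluding assertion.

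The main obstacle is the construction of $H$. One cannot simply transport $\mathcal H$ along $c$ and the counit $\varepsilon_X$: because $\varepsilon_X$ evaluates only at the unit cube, which the expansion fixes, the composite $\varepsilon_X\circ\mathcal H_s\circ c$ is constant; and the partial evaluations $x\mapsto c(x)(d)$ only ever enlarge the cubical support, never shrink it. Hence the retraction must move points genuinely through the topology of $X$, realising concretely the abstract retract furnished by Lemma \ref{lemma: catcomonad}, and the technical heart is to write this motion by an explicit, continuous, based formula, to verify that it fixes $S(X)$, and to confirm that its time-$1$ value lands in $S(X)$ — this last point relying once more on the support computation of part (2). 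I expect the verification of continuity and of the endpoint condition for $H$ to be the most laborious step of the whole argument.
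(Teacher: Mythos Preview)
For Item~(2) your coassociativity identity $c(c(x)(d))(e)=c(x)(\gamma(d;e))$ is precisely what the paper uses, though the paper phrases it as monotonicity (if $\operatorname{Im}(d)\subseteq\operatorname{Im}(d')$ and $c(x)(d)\neq\ast$ then $c(x)(d')\neq\ast$) and then simply asserts that the conclusion is ``straightforward to deduce''. You are right that the reverse inclusion --- that $\operatorname{CSupp}(c(y))$ is no larger than $\{d^{-1}(t)\}$ --- is the delicate half, and the paper does not spell it out either.

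For Item~(1) you have in fact diagnosed a genuine gap in the paper's own argument. The paper defines the retraction as $r=\varepsilon_X\circ\alpha_X\circ\Psi_X\circ c$ and the homotopy analogously through $\mathcal H$ --- exactly the route you rule out. Your objection is correct: since $\varepsilon_X$ evaluates at the identity cube and $c_{t,t}=\operatorname{id}$ (Claim~2 of the approximation theorem), one computes
\[
r(x)=\bigl(\alpha_X\Psi_X(c(x))\bigr)(\operatorname{id})=c(x)\bigl(c_{\operatorname{Cent}(c(x)),\operatorname{Cent}(c(x))}\bigr)=c(x)(\operatorname{id})=\varepsilon_X(c(x))=x,
\]
so $r=\operatorname{id}_X$, not a retraction onto $S(X)$; and since the rectilinear expansion of the identity cube is stationary, the homotopy is likewise constant. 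Your proposal does not fill this gap either, but your diagnosis that the deformation cannot be obtained by transporting $\mathcal H$ through $c$ and $\varepsilon_X$ is sound. Note that the paper's independent categorical proof of the recognition principle (via Lemma~\ref{lemma: catcomonad} and the $(\alpha_\ast,\alpha^!)$ adjunction) does not rely on this section.
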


The result above reduces the proof of Theorem \ref{teo : Every C_n coalgebra is a suspension}
to the task of showing that the $C_n$-subcoalgebra $S(X)$ is homotopy equivalent to an $n$-fold suspension as a $C_n$-coalgebra. 
We show next a sharper result which implies it.

\begin{theorem}
\label{teo : S(X) is SigmaOmega coalgebra}
Let $X$ be a $C_n$-coalgebra. 
Then, the $C_n$-subcoalgebra $S(X)$ of Theorem \ref{teo : S(X) is subcoalgebra} is a $\Sigma^n \Omega^n$-coalgebra.
\end{theorem}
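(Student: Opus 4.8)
The plan is to equip $S(X)$ with the structure map obtained by composing its $C_n$-coalgebra structure with the explicit section $\Psi$ from the approximation theorem. Writing $c\colon S(X)\to C_n(S(X))$ for the restricted coalgebra structure — which lands in $C_n(S(X))$ because $S(X)$ is a subcoalgebra by Theorem~\ref{teo : S(X) is subcoalgebra} — I would set
$$\gamma = \Psi_{S(X)}\circ c\colon S(X)\longrightarrow \Sigma^n\Omega^n S(X),$$
with $\Psi$ as in Theorem~\ref{teo: Approx Theorem}. Concretely, for $x\in S(X)$ the cubical support $\operatorname{CSupp}(c(x))$ is a single point $\{t_x\}$ by definition of $S(X)$, so $t_x=\operatorname{Cent}(c(x))$ is unambiguous and $\gamma(x)=[t_x,\ell_x]$ with $\ell_x(s)=c(x)(c_{s,t_x})$. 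The first routine checks are that $\gamma$ is pointed and continuous (the center varies continuously with $x$ precisely because the support is a single point) and that $\ell_x$ extends to a based loop $S^n\to S(X)$. A reassuring sanity check I would record is that on an honest suspension $\Sigma^nW$ this recovers the canonical structure map $[t_0,w]\mapsto[t_0,\,s\mapsto[s,w]]$, consistent with Proposition~\ref{prop: suspensions have a single point cubical support}.

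The counit axiom is the easy half. Since $c_{t,t}$ is the identity little cube (the degenerate case $s=t$ of the construction in the approximation theorem), evaluating the adjunction counit gives $\varepsilon_{S(X)}(\gamma(x))=\ell_x(t_x)=c(x)(c_{t_x,t_x})=c(x)(\mathrm{id})=x$, the last equality being the counit axiom of the $C_n$-coalgebra $S(X)$. For coassociativity I would unwind both sides using that the comultiplication of $\Sigma^n\Omega^n$ is $\Sigma^n\eta_{\Omega^n}$ (as recorded in the proof of Theorem~\ref{teo: SuspensionLoops coalgebras}); the identity $\Delta_{S(X)}\circ\gamma=\Sigma^n\Omega^n(\gamma)\circ\gamma$ then reduces, point by point in the $S^n$-coordinate $s$, to the two assertions $\operatorname{Cent}(c(\ell_x(s)))=s$ and $\ell_{\ell_x(s)}=\ell_x$. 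The first follows cleanly from the operadic coassociativity identity $c(x)(c_1\circ c_2)=c\big(c(x)(c_1)\big)(c_2)$: taking $c_1=c_{s,t_x}$ shows that $c(\ell_x(s))$ is nontrivial on a cube $c_2$ only when $s\in\operatorname{Im}(c_2)$, so its cubical support is exactly $\{s\}$.

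The hard part — and the main obstacle — is the second assertion $\ell_{\ell_x(s)}=\ell_x$. Using the same identity it becomes the statement that $c(x)$ takes equal values on the composite cube $c_{s,t_x}\circ c_{s',s}$ and on the maximal cube $c_{s',t_x}$; both send $s'\mapsto t_x$, but they are genuinely different little cubes, so this is \emph{not} a formal consequence of coassociativity alone. Equivalently, it is the rigidity statement that for $x\in S(X)$ the value $c(x)(c')$ depends only on the preimage $(c')^{-1}(t_x)$ of the support point — i.e.\ that $c(x)$ lies in the image of the comonad morphism $\alpha_n$ and is fixed by the expansion homotopy $\mathcal H$ of the approximation theorem. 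I expect the proof of this lemma to be exactly where the single-point-support hypothesis is used in an essential, non-formal way, combining the geometric characterization of Proposition~\ref{Prop: Geometric characterization of C_n} with coassociativity and a limiting argument as the cubes around $t_x$ are shrunk toward the support point. Once this lemma is established, $\gamma$ is a genuine $\Sigma^n\Omega^n$-coalgebra structure on $S(X)$, which then feeds into Theorem~\ref{teo: SuspensionLoops coalgebras} to identify $S(X)$ with an $n$-fold reduced suspension.
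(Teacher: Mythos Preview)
Your approach is essentially the same as the paper's: define the structure map $c'=\gamma=\Psi_{S(X)}\circ c$, sending $x\mapsto[t_x,\ell_x]$ with $t_x=\operatorname{Cent}(c(x))$ and $\ell_x(s)=c(x)(c_{s,t_x})$, and then verify the comonad axioms. Your counit check is correct and more explicit than the paper's.

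Where your write-up diverges is in the treatment of coassociativity. The paper's entire argument there is the single sentence ``Because $c'$ is a $C_n$-coalgebra map, it follows that it also satisfies the coassociativity axiom to be a $\Sigma^n\Omega^n$-coalgebra,'' with no further justification. You, by contrast, have actually unwound what coassociativity demands and isolated the genuine content: the \emph{rigidity} statement that for $x\in S(X)$ the value $c(x)(d)$ depends only on $d^{-1}(t_x)$, equivalently that $c(x)$ lies in the image of $\alpha_n$. You are right that this is not a formal consequence of $C_n$-coassociativity alone---the identity $c(x)(c_{s,t_x}\circ c_{s',s})=c(x)(c_{s',t_x})$ compares $c(x)$ on two genuinely different cubes with the same preimage of $t_x$. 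The paper does not supply an argument for this step, so you have not introduced a gap; you have located one that the paper's terse proof leaves open.

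Your instinct about how to close it (combine the geometric characterization of Proposition~\ref{Prop: Geometric characterization of C_n} with coassociativity and a shrinking-cube limiting argument, i.e.\ show that single-point-support elements are fixed by the expansion homotopy $\mathcal H$ and hence lie in the image of $\alpha_n$) is the natural line of attack, and is more than the paper itself offers. If you complete that lemma, your proof will be strictly more detailed than the paper's while following the same route.
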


Since every $\Sigma^n\Omega^n$-coalgebra is an $n$-fold suspension,
Theorem \ref{teo : Every C_n coalgebra is a suspension} is proven.
It suffices to show the two results mentioned, and we do that next.

\begin{proof}[Proof of Theorem \ref{teo : S(X) is subcoalgebra}]
Denote by $i:S(X) \hookrightarrow C_n(X)$ the inclusion and by $c:X\to C_n(X)$ the coalgebra structure map.

\medskip{}
\fbox{\strut \ Item 1.\ }
Let us give a deformation retraction (of spaces) $r: X\to S(X)$, 
that is, a continuous map $r$ such that $ri = \id_{S(X)}$ and a homotopy $H:X \times I \to X$ between $ir$ and $\id_X$.
The map $r$ is the composition 
\[
r : X \hookrightarrow C_n(X) \xrightarrow{\Psi_X} \Sigma^n\Omega^n X \xrightarrow{\alpha_X} C_n(X) \xrightarrow{\varepsilon_X} X.
\]
The maps above are, respectively, 
the coalgebra structure map of $X$, the natural transformations $\Psi$ and $\alpha$, and the counit $\varepsilon$
from Section \ref{sec:approximation theorem}. 
Since the map $\Psi_X$ reduces the cubical support of every point to a singleton and it is surjective,
the image of $r$ is exactly the subspace $S(X)$. 
It further follows that $ri$ is the identity on the subspace $S(X)$ because the map $\Psi_X$ does not change the cubical support of points whose  cubical support was already a single point. 

The homotopy $\mathcal{H}$ from Theorem \ref{teo: Approx Theorem} can also be used to induce a homotopy in this case. 
In particular,
we get the following homotopy
\[
\mathcal{H} : X\times I \hookrightarrow C_n(X) \times I \xrightarrow{\mathcal{H}_X} C_n(X) \xrightarrow{\varepsilon_X} X.
\]
It is straightforward to check that this is indeed a homotopy between $ir$ and $\id_X$ by using 
exactly the same arguments as in Theorem \ref{teo: Approx Theorem}.
Therefore the inclusion $S(X)$ is a homotopy equivalence of pointed spaces.

\medskip

\fbox{\strut \ Item 2.\ }
To show that $S(X)$ is a $C_n$-subcoalgebra, we must show it is closed under the coproduct.
That is, we must check that if $x\in S(X)$ then the image of the map $c(x):\mathcal C_n(1) \to X$ 
is contained in the subspace $S(X) \subseteq X$. 

We make the following observation to show that this is indeed the case.
If $d,d'\in \mathcal C_n(1)$ are two cubes such that $d \subseteq d'$, 
then $c(x)(d) \neq *$ implies that $c(x)(d') \neq *$. 
This is because of the coassociativity of the comonad.
Since $d=e \circ d'$ is the composition of $d'$ with some other little cube $e$,
we have that $c(x)(d)$ is equal to 
\[
\mathcal C_n(1)\xrightarrow{e} \mathcal C_n(1) \xrightarrow{c} X,
\]
evaluated at $d'$.
So $c(x)(d)=c(x)(e\circ d')=e(c(x))(d')$, where $e(c(x))$ is first the composition of $e$ in the comonad and then acting with this on the coalgebra. 
It therefore follows that if $d \subseteq d'$ and $c(x)(d) \neq *$, 
then $c(x)(d') \neq *$.
From this it is straightforward to deduce that if the cubical support of $c(x)$ is just a single point 
then the image of $c(x)$ is contained in $S(X)$;
otherwise the previous identity would be violated.
Therefore, $S(X)$ is a $C_n$-subcoalgebra and the inclusion map is a homotopy equivalence of $C_n$-coalgebras. 
\end{proof}

\begin{proof}[Proof of Theorem \ref{teo : S(X) is SigmaOmega coalgebra}]
To prove this result, 
we need to define a map $c':S(X) \to \Sigma^n \Omega^n S(X)$ and show that it satisfies the comonad identities.
We define $c':S(X) \to \Sigma^n \Omega^n S(X)$ a $c'(x) := [t,\ell]$,
where $t=\operatorname{Cent}\left(c(x)\right)$ and $\ell:S^n \to S(X)$ is given by 
$$
\ell(s) = c(x) \left(c_{s,\operatorname{Cent}\left(c(x)\right)}\right) = c(x) \left(c_{s,t}\right),
$$
where $c_{s,\operatorname{Cent}\left(c(x)\right)}$ is the cube from the proof of Theorem \ref{teo: Approx Theorem}.
Because $c'$ is a $C_n$-coalgebra map, it follows that it also satisfies the coassociativity axiom to be a $\Sigma^n \Omega^n$-coalgebra, which completes the proof.
\end{proof}

\begin{appendices}
\section{The map \texorpdfstring{$\alpha$}{a} is a morphism of comonads}
\label{sec: Appendix on morphism of comonads}

In this appendix, we give the necessary definitions and prove in full detail that the natural transformation $$\alpha_n : \Sigma^n \Omega^n \to C_n$$ appearing in  Theorem \ref{teo: Approx Theorem} defines a morphism of comonads.

\begin{definition} 
	\label{def:Morphism of comonads}
	A \emph{morphism of comonads} $\alpha:\left(C,\Delta,\varepsilon\right) \to \left(C',\Delta',\varepsilon'\right)$ in a category $\mathcal M$ is a natural transformation $\alpha : C\to C'$
	such that for every object  $X\in \mathcal M$, the following two diagrams commute:
	\begin{center}
\begin{tikzcd}[column sep= tiny]
	C(X) \arrow[rr, "\alpha_X"] \arrow[rd, "\varepsilon_X"'] &                                              & C'(X) \arrow[ld, "\varepsilon'_X"] &  & C(X) \arrow[rr, "\Delta_X"] \arrow[d, "\alpha_X"'] &                                                    & C(C(X)) \arrow[d, "\alpha^2_X"] \\
	& X                                            &                                    &  & C'(X) \arrow[rr, "\Delta'_X"]                      &                                                    & C'(C'(X))                       \\
	& \varepsilon'_X\circ \alpha_X = \varepsilon_X &                                    &  &                                                    & \alpha^2_X\circ \Delta_X = \Delta'_X\circ \alpha_X &                                
\end{tikzcd}
	\end{center}
The morphism $\alpha^2_X$ is defined by the following diagram,
which is commutative because $\alpha$ is a morphism of comonads.
\begin{center}
	\begin{tikzcd} 
		C(C(X)) \arrow[rr, "\alpha_{C(X)}"] \arrow[d, "C(\alpha_X)"'] \arrow[rrd, "\alpha_X^2", dashed] &  & C'C(X) \arrow[d, "C'(\alpha_X)"] \\
		C(C'(X)) \arrow[rr, "\alpha_{C'(X)}"]                                                           &  & C'(C'(X))
	\end{tikzcd}
\end{center}

\begin{equation}
\label{ecu:Commutativity of alpha squared}
\alpha^2_X = C'(\alpha_X)\circ \alpha_{C(X)} = \alpha_{C'(X)} \circ C(\alpha_X)
\end{equation}
\end{definition}

\bigskip

Next, we settle the morphism of comonads assertion made in Theorem \ref{teo: Approx Theorem}.

\begin{proposition}
	\label{Prop: Morphism of comonads}
	The natural transformation $\alpha_n:\Sigma^n \Omega^n \to C_n$ in Theorem \ref{teo: Approx Theorem} is a morphism of comonads.
\end{proposition}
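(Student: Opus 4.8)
The plan is to verify the two conditions of Definition~\ref{def:Morphism of comonads} with $C = \Sigma^n\Omega^n$ and $C' = C_n$, so that the unprimed structure maps are those of $\Sigma^n\Omega^n$ and the primed ones those of $C_n$. First I observe that the auxiliary map $\alpha^2_X$ is automatically well defined: the square of Equation~(\ref{ecu:Commutativity of alpha squared}) is precisely the naturality square of $\alpha$ applied to the morphism $\alpha_X \colon \Sigma^n\Omega^n X \to C_n(X)$, and hence commutes for formal reasons, independently of the two conditions to be checked. Throughout I use the identification of Proposition~\ref{Prop: Geometric characterization of C_n} of $C_n(X)$ with a subspace of $\Map(\mathcal C_n(1),X)$, under which the counit $\varepsilon'_X$ evaluates $f$ at the identity cube $1 \in \mathcal C_n(1)$ and the comultiplication is $\Delta'_X(f)(c)(d) = f(\gamma(c;d)) = f(c\circ d)$. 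By Lemma~\ref{Lemma:Determined by 1st comp}, every element of $C_n(-)$ is determined by its arity-one component, so it suffices to compare the relevant maps as elements of the appropriate mapping spaces.

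For the counit condition $\varepsilon'_X \circ \alpha_X = \varepsilon_X$, I would evaluate on a point $[t,\ell]$. The counit $\varepsilon'_X$ reads off the value at the identity cube; since $1^{-1}(t) = t$ and $t \in \mathring 1$ whenever $t$ is not the basepoint, the explicit formula for $\alpha$ gives $\varepsilon'_X(\alpha[t,\ell]) = \alpha[t,\ell](1) = \ell(t)$. This is exactly $\varepsilon_X[t,\ell]$, the adjunction counit of $\Sigma^n\Omega^n$, and the basepoint case is immediate.

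The substantive step is the comultiplication square $\alpha^2_X \circ \Delta_X = \Delta'_X \circ \alpha_X$, which I would establish by unwinding both composites on a point $[t,\ell]$ and comparing the resulting elements of $C_n(C_n(X))$ at arity one. On one side, $\Delta'_X(\alpha[t,\ell])$ is the map $c \mapsto \big(d \mapsto \alpha[t,\ell](c\circ d)\big)$. On the other side, the comultiplication $\Delta_X$ of $\Sigma^n\Omega^n$ sends $[t,\ell]$ to $[t,\widehat{\ell}]$, where $\widehat{\ell}\colon S^n \to \Sigma^n\Omega^n X$ is the loop $s \mapsto [s,\ell]$ produced by the adjunction unit; evaluating $\alpha^2_X$ in the form $C_n(\alpha_X)\circ \alpha_{\Sigma^n\Omega^n X}$ then first yields the map $c \mapsto [\,c^{-1}(t),\ell\,]$ on the locus $t\in\mathring c$ (and the basepoint otherwise) and afterwards postcomposes with $\alpha_X$. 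Identifying the two sides comes down to the two elementary facts about composition of little cubes, namely $(c\circ d)^{-1}(t) = d^{-1}\big(c^{-1}(t)\big)$ and $t \in \mathring{c\circ d}$ if and only if $t \in \mathring c$ and $c^{-1}(t) \in \mathring d$. I expect the main obstacle to be exactly this bookkeeping --- correctly threading the adjunction unit, the functoriality (postcomposition) of $C_n$, and the definition of $\alpha$ on the intermediate space $\Sigma^n\Omega^n(\Sigma^n\Omega^n X)$ --- but once the two cube identities are applied, both composites evaluate to the map $c \mapsto \big(d \mapsto \ell(d^{-1}(c^{-1}(t)))\big)$ on the locus where $t \in \mathring c$ and $c^{-1}(t) \in \mathring d$, and to the basepoint elsewhere, so they coincide. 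This completes the verification.
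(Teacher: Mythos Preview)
Your proof is correct and follows essentially the same approach as the paper: a direct verification of the two comonad-morphism conditions by evaluating on a point $[t,\ell]$ using the explicit formula for $\alpha$, reducing the comultiplication square to the identity $(c\circ d)^{-1}(t)=d^{-1}(c^{-1}(t))$. The only cosmetic difference is that you compute $\alpha^2_X$ via $C_n(\alpha_X)\circ\alpha_{\Sigma^n\Omega^n X}$ whereas the paper uses the other composite $\alpha_{C_n(X)}\circ\Sigma^n\Omega^n(\alpha_X)$; your observation that the $\alpha^2$-square commutes by naturality of $\alpha$ (rather than as a consequence of the comonad axioms) is in fact cleaner than the paper's phrasing.
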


\begin{proof}
Fix an integer $n\geq 1$,
and denote $\alpha_n$ by $\alpha$ to simplify the notation. 
Recall that object-wise, the natural transformation $\alpha$
is explicitly given by
 \begin{equation*}
	\alpha_X : \Sigma^n \Omega^n X \xrightarrow{\gamma} C_n\left(\Sigma^n \Omega^n X\right) \xrightarrow{C_n(\eta_X)} C_n\left(X\right),
\end{equation*}
where $\gamma$ 
is the $\mathcal C_n$-coalgebra structure map 
of $\Sigma^n \Omega^nX$ (Theorem \ref{teo: Iterated suspensions are coalgebras}), and $\eta_X $ is the 
evaluation at $X$ of the counit  $\eta: \Sigma^n\Omega^n \to \id_{\mathsf{Top_*}}$ 
of the adjunction $\left(\Sigma^n,\Omega^n\right)$.
Identify $$\Sigma^n \Omega^n X \cong S^n \wedge \Map_*\left(S^n,X\right).$$
Under this identification, the counit $\eta_X : \Sigma^n \Omega^n X\to X$ becomes the evaluation map, 
$$ev:S^n \wedge \Map_*\left(S^n,X\right) \to X \quad ev : [t,\ell]\mapsto \ell(t).$$

Next, identify $C_n\left(X\right)$ as a subspace of $\Map\left(\mathcal C_n(1),X\right)$.
Recall that 
under this identification, the value of $C_n(g)$ on a map $g:\mathcal C_n(1)\to X$ is the postcomposition with $g$ 
(Proposition \ref{Prop:Characterization of C(X) as a subspace of Map}). 
Then, the map $\alpha_X : \Sigma^n \Omega^n X\to C_n\left(X\right)$ is explicitly given on a point $[t,\ell]$  as the map $$\alpha_X[t,\ell]:\mathcal C_n(1) \to X$$ whose image on a little $n$-cube $c\in \mathcal C_n(1)$ is  
\begin{equation}
	\label{Ecu: Explicit alpha X}
	\alpha[t,\ell](c) =
	\begin{cases} \ell\left(c^{-1}(t)\right) &\mbox{if } t\in \mathring{c}\\
		*	& \mbox{otherwise} \end{cases}
\end{equation}
Geometrically, $\alpha_X$ is just re-scaling the evaluation map $ev : S^n\wedge \Map_*\left(S^n,X\right)$
by shrinking the points of $S^n = I^n/ \partial I^n$ according to the little $n$-cube $c$.

\medskip

We can now check the commutativity of the diagrams in Definition \ref{def:Morphism of comonads}.

\medskip

\fbox{\strut \ $\varepsilon'_X\circ \alpha_X = \varepsilon_X$ \ }

\medskip

Let $[t,\ell]\in \Sigma^n \Omega^n X$.
Since $\varepsilon'_X$ plugs the identity operation $\id\in \mathcal C_n(1)$, we have:
\begin{center}
\begin{tikzcd}[row sep=small]
	\varepsilon'_X\circ \alpha_X :\Sigma^n \Omega^n X \arrow[r, "\alpha_X"] & C_n\left(X\right) \arrow[r, "\varepsilon'_X"]    & X                                  \\
	{[t,\ell]} \arrow[r, maps to]             & {\alpha_X[t,\ell]} \arrow[r, maps to] & {\alpha_X[t,\ell](\id)=\ell(c(t))}
\end{tikzcd}
\end{center}
The composition above is exactly the definition of 
 $\varepsilon_X[t,\ell]$.

\medskip

\fbox{\strut \ $\alpha^2_X\circ \Delta_X = \Delta'_X\circ \alpha_X$\ }

\medskip

The map $\alpha_X^2$ can be written as two different compositions, see Diagram (\ref{ecu:Commutativity of alpha squared}). 
Here, we  prove that 
\begin{equation}\label{Ecu: Morphism of monads 2}
	\alpha_{C'(X)} \circ C\left(\alpha_X\right) \circ \Delta_X = \Delta'_X \circ \alpha_X,
\end{equation}
where $C=\Sigma^n\Omega^n \xrightarrow{\alpha_n} C'=C_n.$
The left hand side of Equation (\ref{Ecu: Morphism of monads 2}) is the composition
\begin{center}
	\begin{tikzcd}[column sep = huge]
		\Sigma^n\Omega^nX \arrow[r, "\Delta_X"] & \Sigma^n\Omega^n\left(\Sigma^n\Omega^nX\right) \arrow[r, "\Sigma^n\Omega^n\left(\alpha_X\right)"] & \Sigma^n\Omega^n\left(C_n\left(X\right)\right) \arrow[r, "\alpha_{C_n\left(X\right)}"] & C_n\left(C_n\left(X\right)\right).
	\end{tikzcd}
\end{center}
The maps in the composition above are given as follows.
\begin{itemize}
	\item Denote by $\eta_X:X\to \Omega^n \Sigma^n X$ the unit of the $\left(\Sigma^n , \Omega^n\right)$ adjunction.
	Then $\Delta_X = \Sigma^n \circ \eta_X \circ \Omega^n$.
	Thus, a point $[t,\ell]\in \Sigma^n\Omega^n X = S^n\wedge \Map_*\left(S^n,X\right)$ maps to the point $[t,\bar \ell]\in S^n\wedge \Map_*\left(S^n,\Sigma^n\Omega^n X\right)$, where $$\bar \ell : S^n \to \Sigma^n \Omega^n X \quad s \mapsto [s,\ell].$$  
	\item The second map $\Sigma^n\Omega^n\left(\alpha_X\right)$ maps the point $[t,\bar \ell]$ to the point $[t, \alpha_X\circ \bar \ell]$.
	\item The last map takes a point $[t,\ell']$, where $\ell':S^n\to C_n\left(X\right)$ is a loop,
	to the evaluation
	\begin{center}
		\begin{tikzcd}[row sep=small]
			{\alpha_{C_n\left(X\right)}[t,\ell']:\mathcal C_n(1)} \arrow[r] & C_n\left(X\right)                      \\
			c \arrow[r, maps to]                                 & \ell'\left(c^{-1}(t)\right)
		\end{tikzcd}
	\end{center}
\end{itemize}
Therefore, with the notation above, the full composition applied to a point $[t,\ell]$ yields
\begin{equation*}
	[t,\ell] \mapsto [t, \bar \ell] \mapsto [t,\alpha_X\circ \bar \ell] \mapsto \alpha_{C_n\left(X\right)}[t,\alpha \circ \bar \ell].
\end{equation*}
The resulting map 
$$\alpha_{C_n\left(X\right)}[t,\alpha \circ \bar \ell] : \mathcal C_n(1)\to C_n\left(X\right)$$
acts on a little $n$-cube 
$c\in \mathcal C_n(1)$ by producing 
$$c \mapsto \left(\alpha_X\circ \bar l\right)\left(c^{-1}(t)\right)=\alpha[c^{-1}(t),\ell] : \mathcal C_n(1)\to X,$$
where $c_2\in \mathcal C_n(1)$ gets mapped to $$\alpha[c^{-1}(t),\ell](c_2) = \ell\left(c_2^{-1}\left(c^{-1}(t)\right)\right).$$

The right hand side of Equation (\ref{Ecu: Morphism of monads 2}) is the composition
\begin{center}
	\begin{tikzcd}
		\Sigma^n\Omega^nX \arrow[r, "\alpha_X"] & C_n\left(X\right) \arrow[r, "\Delta'_X"] & C_n\left(C_n\left(X\right)\right)
	\end{tikzcd}
\end{center}
The first map in the composition above was given in Equation (\ref{Ecu: Explicit alpha X}).
The map $\Delta'_X$, described in Proposition \ref{Prop: Comonad structure maps}, 
 applies an arbitrary map $h:\mathcal C_n(1)\to X$ to the map $\bar h:\mathcal C_n(1)\to C_n\left(X\right)$ given by 
 $$\mu\in \mathcal C_n(1) \mapsto \bar h(\mu) : \mathcal C_n(1)\to X, \quad \bar h(\mu)(\theta) := h\left(\gamma\left(\mu;\theta\right)\right).$$
 In particular, $\Delta'_X$ applies the map $\alpha_X[t,\ell]$ to the map 
\begin{center}
\begin{tikzcd}[row sep=small]
\Delta_X'\left(\alpha_X[t,\ell]\right): \mathcal C_n(1) \arrow[r] & C_n\left(X\right)                    &                                       \\
c \arrow[r, maps to]        & \Delta_X'\left(\alpha_X[t,\ell]\right)(c) = \overline{\alpha[t,\ell]}(c):\mathcal C_n(1) \arrow[r] & X                                     \\
& c_2 \arrow[r, maps to]     & \ell\left(\gamma\left(c;c_2\right)^{-1}(t)\right)
\end{tikzcd}
\end{center}
Since, by definition of the composition in the little cubes operad, $$\ell\left(c_2^{-1}\left(c^{-1}(t)\right)\right) = \ell\left(\gamma\left(c;c_2\right)^{-1}(t)\right)$$
for all little cubes $c,c_2$, the claim is proven.
\end{proof}

\section{The homotopy \texorpdfstring{$\mathcal H$}{H} is continuous}
\label{sec: Appendix on continuity of homotopy}

In this appendix, we prove in full detail that the homotopy
$$
\mathcal H:C_n\left(X\right)\times I \to C_n\left(X\right)
$$
appearing in Theorem \ref{teo: Approx Theorem} takes values in continuous functions and is continuous. 
For simplicity, we first do the case of 1-cubes and explain later how the construction generalizes. 
For each $\left(f,t\right)\in C_n\left(X\right)\times I$,
the image of the homotopy is the map
\begin{center}
\begin{tikzcd}[row sep = small]
{\mathcal H\left(f,t\right) : \mathcal C_n(1)} \arrow[r] & X                           \\
\qquad \qquad c \arrow[r, maps to]                                     & f\left(\gamma^f_c(t)\right).
\end{tikzcd}
\end{center}
We first prove that this is continuous in $c$. 
As before, for simplicity, we first do the case of 1-cubes and it will be clear later how the construction generalizes.
To do this, we first impose a metric on $\mathcal C_1(1).$ 
For $c_1, c_2\in C_1(1),$ define 
\begin{equation}
    \label{ecu: distance on C_1(1)}
    d\left(c_1, c_2\right) = \operatorname{max}\left\{|a_1 -a_2|,|b_1-b_2|\right\},
\end{equation}
where $c_i(u) = (b_i-a_i)u + a_i$.
In other words, it is the largest distance between any two corresponding sides of either cube. 
The topology on $\mathcal C_1(1)$ induced by this metric coincides with its usual topology 
(i.e., the subspace topology inside $\operatorname{Map}(I,I)$ with the compact-open topology).

We next prove that the assignment
$$
c\mapsto \gamma^f_{c}(t)
$$
is continuous in $c$ with respect to this metric. 
Recall that 
$$
a(s) = \frac{a^2-ab-a\alpha(s) + \alpha(s)p}{a-b} \qquad \textrm{and} \qquad b(s) = \frac{ab-b^2-b\alpha(s)+\alpha(s)p}{a-b}.
$$
Then, we have
\begin{multline*}
    |a_1(s)-a_2(s)|\leq |a_1-a_2|+ \alpha(s)\left|\frac{p-a_1}{b_1-a_1}- \frac{p-a_2}{b_2-a_2}\right|\\\leq 
    |a_1-a_2|+ \frac{\alpha(s)}{|(b_1-a_1)(b_2-a_2)|}\left(p|b_1-b_2|+p|a_1-a_2| + a_1|b_1-b_2| + b_1|a_1-a_2| \right),
\end{multline*}
where we have used the triangle inequality repeatedly. 
The left hand side goes to zero as $d\left(c_1, c_2\right)\to 0$.
A similar computation 
for $|b_1(s)-b_2(s)|$ proves that $d\left(\gamma^f_{c_1}(t), \gamma^f_{c_2}(t)\right)\to 0$ 
as $d\left(c_1, c_2\right)\to 0$ for all $t\geq 0$. 
It then follows from the continuity of $f$ that $\mathcal H\left(f,t\right)$ is a continuous function. 

\medskip

The method outlined clearly extends to the general case of the little $n$-cubes operad for $n>1$.

\medskip

Next we verify that the homotopy $\mathcal H$ itself is continuous. First, we shall show that for $f\in C_n(X),$ the function $\operatorname{Cent}\left(f\right)$ depends continuously on $f.$
Recall that this constructed as follows:
first, one computes the cubical support of $f$. 
If $f$ is nontrivial, this forms an
$n$-rectangle $R$ and then one computes the the center of $R$. 
The procedure of computing the center clearly depends continuously on the choice of rectangle. 
It therefore suffices to show that the function
$$
f \mapsto \operatorname{CSupp}\left(f\right)
$$
is continuous. 
This has been shown in Proposition \ref{prop: The cubical support is continuous}.
Then it follows from our explicit formulae that rectilinear expansion $\gamma^f_{c}(t)$, 
viewed as a function $C_n(1)\times I\to C_n(1)$, 
depends continuously on $\operatorname{Cent}\left(f\right)$.
We therefore have that the function $c\mapsto f\left(\gamma_{c}^f(t)\right)$ depends continuously on $f$ 
as it is the composition of two continuous functions in $f$.

\section{An explicit description of the map $G$}
\label{sec: The explicit description of the map G}

In this appendix, we explicitly construct the function $G$ whose existence is claimed in Section \ref{sec: Center and cubical support}.

\smallskip

We construct the map $G$ from geometric arguments.
We will start with the 1-dimensional case, and explain the higher-dimensional case at the end.
First, identify the space $\mathcal C_1(1)$ with a right triangle $T$ missing a cathetus,
$$T = \bigcup_{x\in (0,1]} \big(\{x\}\times [0,1-x)\big) \subseteq \mathbb R^2.$$
The homeomorphism $\mathcal C_1(1)\cong T$ maps the little $1$-cube $c=[a,b]$ to the point $\left(b-a, a\right)$.
In the triangle, the $x$-coordinate represents the size of the cube,
and the $y$-coordinate its distance from the origin $0$.
The inverse homeomorphism maps the point $(x,y)$ to the little $1$-cube $[y,x+y]$.
Under this point of view, the whole left side of the triangle is missing because it corresponds to the limit points of shrinking intervals.
The right vertex $(1,0)$ corresponds to the identity cube $\id\in \mathcal C_1(1)$.

We will derive $G$ from geometric arguments applied to the triangle $T$
and then pulling back the formulas to the space $\mathcal C_1(1)$.
The idea is simple once we interpret the three properties required to $G$ in the triangle $T$.
\begin{enumerate}
    \item \emph{The function $G$ is the identity when restricted to $A$.}
    
    The set $A$ of little $1$-cubes touching the boundary $\partial I = \{0,1\}$
    of $I$ is the union of the two solid sides of the triangle.
    We refer to this set as the boundary of the triangle, 
    as it is literally the topological boundary of $T$ as a subspace of $\mathbb R^2$.
    The bottom side of the triangle corresponds to the little cubes touching 0,
    and the top side (hypotenuse) corresponds to the little 1-cubes touching 1.
    The little 1-cubes corresponding to interior points of the triangle do not touch the boundary $\partial I$.
    Therefore, the first requirement amounts to asking the homeomorphism $G$ to be the identity in the boundary of the triangle. 

    \item \emph{For each sequence of cubes outside of $A$ on which the function $q$ tends to zero,
    the image of the map $G$ on this sequence tends to the identity cube.}
   
   Recall that $q$ is the quotient $\operatorname{rad}/\left(\operatorname{rad}+d_b\right)$.
    Therefore, the condition that $q \to 0$ on sequences of cubes is equivalent 
    to  requiring that the sequence of points in $T$ corresponding to these cubes tends to the left (missing) side of the triangle.
    It is important to remark that this requirement is only for sequences not in $A$.

    \item \emph{There is an inclusion on images $\operatorname{Im}(c)\subseteq \operatorname{Im}\left(G(c)\right)$ 
    for every little interval $c\in \mathcal C_1\left(1\right)$.}
    
    If $c=[a,b]$ is a little 1-cube, then $G(c) = [f(a),g(b)]$ for some $f,g:[0,1]\to [0,1]$, 
    and the condition $\operatorname{Im}(c) \subseteq \operatorname{Im}\left(G(c)\right)$ is equivalent to the requirement 
    \begin{equation}
        \label{ecu: restrictions}
        0 \leq f(a) \leq a \qquad \textrm{and} \qquad b \leq g(b) \leq 1.
    \end{equation}
    In particular, we deduce from the equations above that $G$ cannot shrink the size of cubes: 
    $b-a \leq g(b)-f(a) \leq 1$.
    Next, let us interpret these conditions under the identification $\mathcal C_1(1) \cong T$.
    Let $c = (x,y)\in T$.
    Then $G(c) = \left(h(x),k(y)\right)$ for some $h,k:[0,1]\to [0,1]$ such that if
    \[
    c = (x,y) = [\underbrace{y}_{a},\underbrace{x+y}_{b}],
    \]
    then 
    \[
    G(c) = (h(x),k(y)) = [\underbrace{k(y)}_{f(a)},\underbrace{h(x)+k(y)}_{g(b)}].
    \]
    Thus, using Equation \eqref{ecu: restrictions}, the maps $h$ and $k$ satisfy
    \[
    0 \leq k(y) \leq y \qquad \textrm{and} \qquad x+y \leq h(x)+k(y)\leq 1.
    \]
    An elementary operation with the equations above yields that $x \leq h(x)$.
    Recalling that the $x$-coordinate of the point $c$ corresponds to its size, the inequality $x \leq h(x)$
    codifies the fact that $G$ cannot shrink cubes.
\end{enumerate}

The geometric constraints summarized above suggest several ideas to construct an explicit such $G$.
We will chase the following idea.
Let $c=[a,b]$ be a little $1$-interval, identified with the point $(x_0,y_0)$ of the triangle $T$.
If $c$ belongs to the boundary of the triangle $A$, then $G(c) = c$.
Therefore, assume $c\notin A$.
We need several auxiliary constructions and definitions that we collect as steps below.

\smallskip

\noindent \emph{Step 1: the line segments.}
Consider the line segment $\ell_c$ in the triangle containing $c$ and the identity cube $\id = (1,0)$.
The slope-intercept equation of $\ell_c$ gives that 
\[
\ell_c =  \left\{ y = \frac{y_0}{x_0-1}(x-1) \mid x\in [0,1)\right\}.
\]
The line segment $\ell_c$ depends only on the slope of $c$, denoted $\operatorname{slope}(c)$,
and it is a continuous function of $c$.
The possible slopes are in the range $[-1,0]$.

\begin{figure}[h!]\centering
		\includegraphics[scale=0.4]{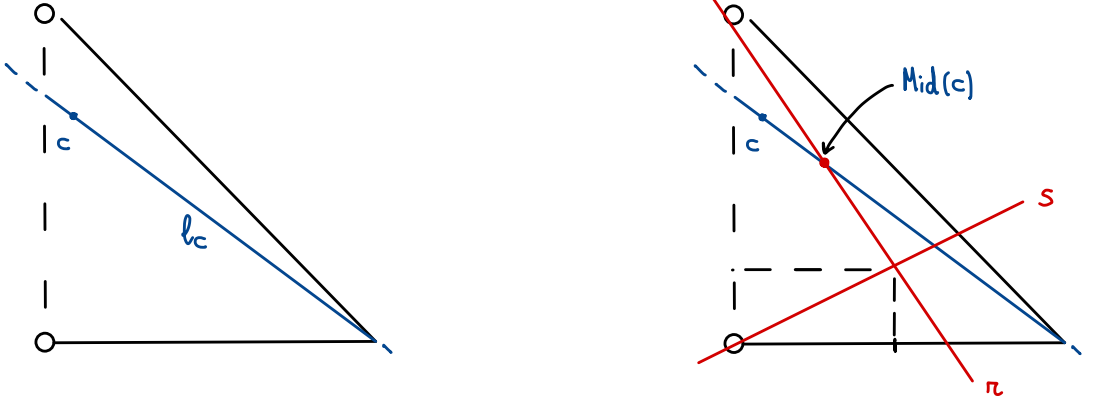}
        \caption{{\footnotesize{Left: \ The line $\ell_c$. \ Right: The lines $r$ and $s$, and the midpoint $\operatorname{Mid}(c)$.}}}
\end{figure}

\bigskip

\smallskip

\noindent \emph{Step 2: the midpoint of the line segments.}
To each line segment $\ell_c$, we will associate a "midpoint" that depends continuously on the slope of $\ell_c$. 
To do so, we need two auxiliary lines $r$ and $s$ that we define now.
Let $r$ be the affine line through the points $(0,1)$ and $(1/2,1/4)$. 
Its slope-intercept equation is 
\[
r \ \equiv \ \big\{ y = -\frac{3}{2}x+1.
\]
Let $s$ be the line through the points $(0,0)$ and $(1/2,1/4)$. 
Explicitly, 
\[
s \ \equiv \  \big\{ y =  \frac{1}{2}x.
\]
The midpoint of a little 1-cube $c$, denoted $\operatorname{Mid}(c)$, is the point in $T$ given by
\[
\operatorname{Mid}(c) = 
\begin{cases}
    r \cap \ell_c & \mbox{if } \operatorname{slope}(c) \leq -\frac{1}{2} \\
    s \cap \ell_c & \mbox{otherwise.}
\end{cases}
\]
It is a continuous function of $c$ whose expression in coordinates when $\operatorname{slope}(c) \leq -\frac{1}{2}$ is
\[
\operatorname{Mid}(c) 
= \left(\frac{2(x_0+y_0-1)}{3x_0+2y_0-3},\frac{-y_0}{3x_0+2y_0-3}\right),
\]
and otherwise it is
\[
\operatorname{Mid}(c) 
= \left(\frac{2y_0}{2y_0-x_0+1},\frac{y_0}{2y_0-x_0+1}\right). 
\]

\smallskip

\noindent \emph{Step 3: the final expression of $G$.}
The function $G$ is the identity on those cubes lying on the line $s$ or below it,
and on the line $r$ or above it.
That is, if $c=(x_0,y_0)$, then $G(c)= c$ if 
\[
y_0 \leq \frac{1}{2} x_0 \qquad \textrm{or} \qquad y_0 \geq -\frac{3}{2} x_0.
\]
Otherwise, $G$ slides the cube $c$ along the line connecting $c$ to the identity cube $(1,0)$ in inverse proportion:
\begin{equation}
    \label{ecu: formula for G}
    G(c) = \operatorname{Mid}(c) + \lambda \cdot \overrightarrow{v}.
\end{equation}
\begin{figure}[h!]\centering
		\includegraphics[scale=0.4]{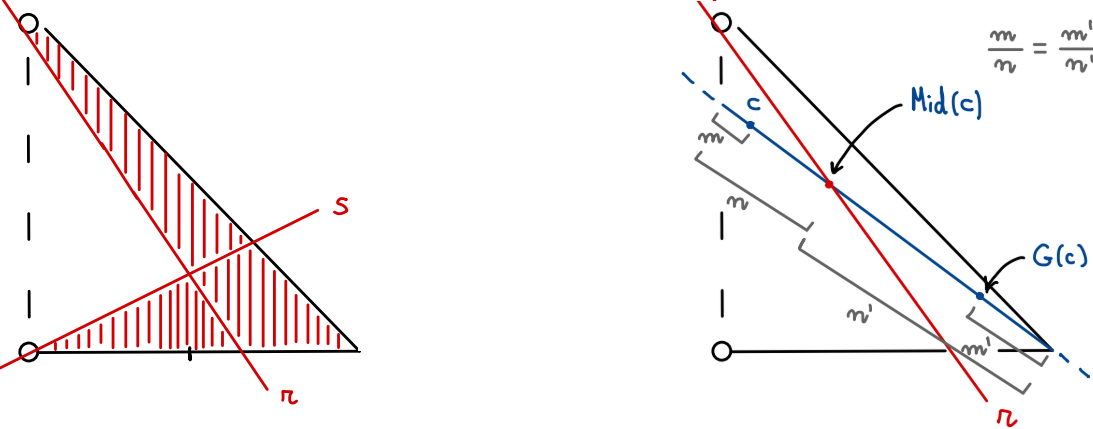}
        \caption{{\footnotesize{\ Left: The region where $G  =\id$ in red.\ Right: The image $G(c)$ using the  proportion factor.}}}
\end{figure}

Here, 
\[
\overrightarrow{v} = \overrightarrow{\operatorname{Mid}(c),\operatorname{id}} = \overrightarrow{\operatorname{Mid}(c),(1,0)} 
= (1,0) - \operatorname{Mid}(c)
= \begin{cases}
    \left(\frac{x_0 - 1}{3x_0+2y_0-3}, \frac{y_0}{3x_0+2y_0-3}\right)           & \mbox{if } \operatorname{slope}(c) \leq -\frac{1}{2} \\[0.3cm]
    \left(\frac{1 - x_0}{2y_0 - x_0 + 1}, \frac{-y_0}{2y_0 - x_0 + 1}\right)    & \mbox{otherwise}
\end{cases}
\]
is the direction vector of the line through $\operatorname{Mid}(c)$ and $(1,0)$, 
and
\[
\lambda 
= \frac{\operatorname{dist}\left(c,\ell_c\cap \left(x=0\right)\right)}
{\operatorname{dist}\left(\operatorname{Mid}(c),\ell_c\cap \left(x=0\right)\right)}
\]
is the proportion factor.
Technically, $\ell_c$ is defined as a line segment inside $T$, but here we are abusing the notation so that
$\ell_c$ denotes the whole line containing this segment.
Explicitly,
\[
\lambda 
= \frac{\operatorname{dist}\left(c,\left(0,-\frac{y_0}{x_0-1}\right)\right)}
{\operatorname{dist}\left(\operatorname{Mid}(c),\left(0,-\frac{y_0}{x_0-1}\right)\right)}
= 
\begin{cases}
\begin{aligned}
\frac{\sqrt{x_0^2 + \left(\frac{y_0}{x_0-1}+y_0\right)^2}}
    {\sqrt{ \left(x_0 - \frac{2(x_0 + y_0 - 1)}{3x_0 + 2y_0 - 3}\right)^2 + \left(y_0 + \frac{y_0}{3x_0 + 2y_0 - 3}\right)^2}} 
    & \qquad  \mbox{if } \operatorname{slope}(c) \leq -\frac{1}{2} \\[2ex]
\frac{\sqrt{x_0^2 + \left(\frac{y_0}{x_0-1}+y_0\right)^2}}
    {\sqrt{\left(x_0 + \frac{2y_0}{x_0 - 2y_0 - 1}\right)^2 + \left(y_0 + \frac{y_0}{x_0 - 2y_0 - 1}\right)^2 }} 
    &\qquad  \mbox{otherwise.}
\end{aligned}
\end{cases}
\]

The explicit form of $G$ on points outside $A$ can be obtained directly from Equation \eqref{ecu: formula for G} 
by substituting the terms with the expressions previously computed. 
We refrain from presenting the resulting formulas, as they would not contribute further to the clarity of the exposition.

\smallskip

A similar strategy provides explicit formulas for the map $G: \mathcal C_n(1) \to \mathcal C_n(1)$ in the higher-dimensional case.
Indeed, there is a homeomorphism 
\[
\mathcal C_n(1) \cong \mathcal C_1(1)\times \cdots \times \mathcal C_n(1)
\]
mapping a little $n$-cube $c=\left(f_1,...,f_n\right)$ to the product $f_1\times \cdots \times f_n$, 
where each $f_i: I\to I$ is the rectilinear embedding at the $i$-th coordinate, 
\[
f_i (t) = tb_i +(1-t)a_i.
\]
In terms of geometric cubes,  
it maps the little $n$-cube $c$ to the product of intervals $c_1\times \cdots \times c_n = [a_1,b_1]\times \cdots \times [a_n,b_n]$.
The homeomorphism aboves provides an identification
\[
\mathcal C_n(1) \xrightarrow{\ \cong \ } T\times \cdots \times T = T^n \subseteq \mathbb R^{2n} 
\]
\[
c = [a_1,b_1]\times \cdots \times [a_n,b_n] \mapsto \left(b_1-a_1,a_1, ..., b_n-a_n,a_n\right).
\]
Applying the 1-dimensional function $G$ at each coordinate above yields the higher-dimensional map.

\end{appendices}

\bibliographystyle{plain}
\bibliography{MyBib}

\medskip

\noindent\sc{Oisín Flynn-Connolly}\\
\noindent\sc{Leiden Institute of Advanced Computer Science,\\
Leiden University,\\
Leiden, The Netherlands}\\
\noindent\tt{o.c.flynn-connolly@liacs.leidenuniv.nl}\\

\noindent\sc{José Manuel Moreno Fernández}\\ 
\noindent\sc{Departamento de Álgebra, Geometría y Topología, \\ Universidad de Málaga, 29080, Málaga, Spain}\\
\noindent\tt{josemoreno@uma.es}\\

\noindent\sc{Felix Wierstra}\\ 
\noindent\sc{Korteweg-de Vries Institute for Mathematics \\
University of Amsterdam \\ Science Park 105-107, 1098 XG Amsterdam, Netherlands}\\
\noindent\tt{felix.wierstra@gmail.com}

\end{document}